\documentclass[11pt]{amsart}
\usepackage[margin=1in]{geometry}
\usepackage{amsmath,amssymb,mathtools}
\usepackage{microtype}
\usepackage[T1]{fontenc}
\usepackage{lmodern}
\usepackage[hidelinks]{hyperref}
\hypersetup{
  pdftitle={Endpoint eigenfunction restriction estimates in codimension two},
  pdfauthor={Xing Wang and Cheng Zhang}
}
\numberwithin{equation}{section}
\newtheorem{mainthm}{Theorem}
\newtheorem{theorem}{Theorem}[section]
\newtheorem{proposition}[theorem]{Proposition}
\newtheorem{lemma}[theorem]{Lemma}

\theoremstyle{remark}
\newtheorem{remark}[theorem]{Remark}
\newcommand{\R}{\mathbb R}
\newcommand{\N}{\mathbb N}
\newcommand{\Sph}{\mathbb S}
\newcommand{\HH}{\mathcal H}

\newcommand{\ind}{\mathbf 1}
\newcommand{\pv}{\operatorname{p.v.}}
\newcommand{\supp}{\operatorname{supp}}
\newcommand{\W}{\mathcal W_\lambda}
\newcommand{\RR}{\mathcal R_\Sigma}
\newcommand{\sgn}{\operatorname{sgn}}
\newcommand{\dd}{\,d}
\newcommand{\wt}{\widetilde}
\newcommand{\norm}[1]{\left\lVert #1\right\rVert}

\newcommand{\ip}[2]{\left\langle #1,#2\right\rangle}
\allowdisplaybreaks[2]
\title[Endpoint eigenfunction restriction]{Endpoint eigenfunction restriction estimates in codimension two}
\date{}
\author{Xing Wang and Cheng Zhang}
\address{School of Mathematics, Hunan University, Changsha, HN 410012, China}
\email{xingwang@hnu.edu.cn}
\address{Yau Mathematical Sciences Center,
	Tsinghua University,
	Beijing, BJ 100084, China}
\email{czhang98@tsinghua.edu.cn}
\begin{document}
	\begin{abstract}
		We investigate the optimal endpoint $L^2$ restriction estimates of Laplace eigenfunctions on submanifolds of codimension 2 in a smooth closed Riemannian manifold $M$. For every fixed smooth codimension-two submanifold
		$\Sigma$, we prove a little-o improvement $o(\lambda^{1/2}\sqrt{\log\lambda})$ on the classical estimate $O(\lambda^{1/2}\sqrt{\log\lambda})$ of Burq--G\'erard--Tzvetkov \cite{BGT} and Hu \cite{Hu}. Our proof uses the Bargmann transform and Tataru's phase-space representation \cite{TataruPhaseSpace} to reduce the problem to Stein--Street's estimate \cite{SS}  for singular Radon transforms. The three-dimensional case can be handled directly by Ricci--Stein's estimate \cite{RicciStein}.   Moreover, we construct explicit examples to show that the little-o improvement is optimal in general.  These are closely related to earlier counterexamples for  endpoint Strichartz estimates by Montgomery--Smith \cite{MontgomerySmith} and Carbery--Hofmann. In particular, we establish the log-free estimate $O(\lambda^{1/2})$
		when $(M,g)$ and $\Sigma$ are real analytic, whereas this estimate
		fails in general in the smooth setting.
	\end{abstract}
		\subjclass[2010]{Primary 58J50; Secondary 35P20, 42B20.}
	\keywords{eigenfunction, oscillatory integral, singular Radon transform.}
	\maketitle
	
	\renewcommand{\theequation}{\arabic{equation}}
	\section{Introduction}
	
	Let $(M,g)$ be a smooth closed Riemannian manifold of dimension
	$n\geq3$, and let $\Sigma\subset M$ be a smooth embedded closed
	submanifold.  Let $-\Delta_g$ denote the nonnegative
	Laplace--Beltrami operator.  We write $e_\lambda$ for an $L^2$-normalized
	eigenfunction of frequency $\lambda$, so that
	\begin{equation}\label{eigeq}
		-\Delta_g e_\lambda=\lambda^2e_\lambda,\qquad
		\norm{e_\lambda}_{L^2(M)}=1,\qquad \lambda\geq0.
	\end{equation}
	To measure the concentration of an eigenfunction, an important approach is to study the growth of its $L^p$ norms restricted to submanifolds. This type of estimates was studied by Reznikov \cite{rez04} for Maass forms on hyperbolic surfaces.  General $L^p$ restriction estimates on submanifolds for Laplace eigenfunctions were
	established by Burq--G\'erard--Tzvetkov \cite{BGT} and Hu \cite{Hu}.
	Earlier related results include estimates for Fourier integral operators
	with fold singularities by Greenleaf--Seeger \cite{gs1994} and boundary
	trace estimates for the wave equation by Tataru \cite{tataru}.
	Tacy \cite{Tacy} extended restriction estimates to quasimodes,
	while Blair--Park \cite{BP, BP2} and Huang--Wang--Zhang \cite{HuangWangZhang} treated
	Schr\"odinger eigenfunctions with singular potentials.
	Bourgain \cite{bo2009} related geodesic restriction bounds to global
	$L^p$ estimates.  Improvements under curvature assumptions were obtained
	by Chen--Sogge
	\cite{ChenSogge}, Chen \cite{chen}, Xi--Zhang \cite{xz}, Zhang \cite{zhang}, 
	Blair \cite{mb}, Hezari \cite{hez}, and Park \cite{park}.
	On flat tori, arithmetic methods yield stronger restriction bounds,
	see Bourgain--Rudnick \cite{br2012},
	Huang--Zhang \cite{HuangZhang},
	Zhang--Zhu \cite{ZhangZhu}.
	
	Let $\Sigma\subset M$ be a smooth embedded closed
	submanifold of codimension two. The codimension-two endpoint restriction
	estimate of Burq--G\'erard--Tzvetkov \cite{BGT} gives
	\begin{equation}\label{bgtest}
		\norm{e_\lambda}_{L^2(\Sigma)}
		\leq C_{M,g,\Sigma}\lambda^{1/2}\sqrt{\log\lambda}\,
		\norm{e_\lambda}_{L^2(M)},\quad \lambda\geq2.
	\end{equation}
	It
	remains valid for every closed embedded $C^{2,1}$ submanifold
	$\Sigma$ of codimension two by flattening the embedding and applying Blair \cite[Theorem 1.1]{Blair}. On the other hand, the examples given
	in \cite{BGT} on the standard round sphere $S^n$ exhibit growth of
	order $\lambda^{1/2}$, with no logarithmic factor. Thus, a  long-standing problem has been to determine the optimal growth
	rate in the codimension-two endpoint restriction estimate.
	
	In view of the examples on the sphere, it has long been expected that
	the logarithmic loss in \eqref{bgtest} can be removed in general, and that for
	every fixed smooth codimension-two submanifold $\Sigma$ in a general
	smooth manifold $M$, one should have the log-free estimate
	\begin{equation}\label{logfree}
		\norm{e_\lambda}_{L^2(\Sigma)}
		\leq C_{M,g,\Sigma}\lambda^{1/2}
		\norm{e_\lambda}_{L^2(M)}.
	\end{equation}
	See e.g. \cite{BGT}, \cite{ChenSogge}, \cite[Problem~1]{WangZhang}, and
	\cite{HuangWangZhang}. A number of works have established the
	log-free estimate in important special geometric settings.
	Chen--Sogge \cite{ChenSogge} proved \eqref{logfree} for geodesics
	in dimension three. In our earlier work \cite{WangZhang}, we proved
	it for totally geodesic codimension-two submanifolds in all dimensions
	$n\geq3$, as well as for curves with nonvanishing geodesic curvature
	in dimension three. That work also treats real-analytic curves and curves satisfying a
	finite-type condition.
	
	In this paper, we give a complete characterization of the optimal
	growth rate in the codimension-two endpoint restriction estimate.
	We first prove that the Burq--G\'erard--Tzvetkov bound always admits
	a little-o improvement for smooth codimension-two submanifolds, and
	then show that no prescribed quantitative rate of improvement is
	possible in general. Moreover, for real-analytic $(M,g)$ and $\Sigma$, we
	establish the log-free estimate in every dimension.

	\begin{mainthm}\label{thm:general}
		Let $(M,g)$ be a smooth closed Riemannian manifold of dimension
		$n\geq3$, and let $\Sigma\subset M$ be a smooth embedded closed
		submanifold of codimension 2. There are constants
		$B=B(M,g,\Sigma)$ and $A_m=A_m(M,g,\Sigma)$, $m=2,3,\ldots$, such that
		every eigenfunction $e_\lambda$ with $\lambda\geq2$ satisfies
		\begin{equation}\label{eq:general-quant}
			\norm{e_\lambda}_{L^2(\Sigma)}^2
			\leq\left(A_m\lambda+\frac Bm\lambda\log\lambda\right)\norm{e_\lambda}_{L^2(M)}^2.
		\end{equation}
		In particular, as $\lambda\to\infty$,
		\begin{equation}\label{eq:general-littleoh}
			\norm{e_\lambda}_{L^2(\Sigma)}
			=o\!\left(\lambda^{1/2}\sqrt{\log\lambda}\right)\norm{e_\lambda}_{L^2(M)}.
		\end{equation}
		
		If $(M,g)$ and the embedding $\Sigma\hookrightarrow M$
		are real analytic, then the log-free estimate \eqref{logfree} holds.
	\end{mainthm}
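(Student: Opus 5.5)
We follow the strategy announced in the abstract: reduce the restriction bound to an $L^2$ estimate for an operator on $M$ with kernel singular along geodesics through $\Sigma$, and then control the logarithmically divergent part with singular Radon transform bounds.

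\emph{Reduction to a $TT^*$ operator.} Let $\chi_\lambda=\rho(\sqrt{-\Delta_g}-\lambda)$, where $\hat\rho$ is supported in $(-\epsilon,\epsilon)$ with $\epsilon$ below the injectivity radius. Then $e_\lambda=\chi_\lambda e_\lambda$, so it suffices to bound $\norm{\chi_\lambda}_{L^2(M)\to L^2(\Sigma)}^2=\norm{\ind_\Sigma\chi_\lambda\chi_\lambda^*\ind_\Sigma}_{L^2(\Sigma)\to L^2(\Sigma)}$. By the Hadamard parametrix, the kernel of $\chi_\lambda\chi_\lambda^*$ restricted to $\Sigma\times\Sigma$ has the form
\[
K_\lambda(x,y)=\lambda^{\frac{n-1}2}\sum_\pm a_\pm(x,y)\,e^{\pm i\lambda d_g(x,y)}\,d_g(x,y)^{-\frac{n-1}2}
\]
for $d_g\gtrsim\lambda^{-1}$, and $|K_\lambda|\lesssim\lambda^{n-1}$ for $d_g\lesssim\lambda^{-1}$. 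Since $\dim\Sigma=n-2$, a Schur test on dyadic shells $d_g\sim 2^{-k}$ shows that each shell contributes $O(\lambda)$. There are $\sim\log\lambda$ shells, which recovers \eqref{bgtest}. Any improvement must therefore come from cancellation between shells.

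\emph{Phase-space localization and the singular Radon transform.} Shells with $d_g\ge c_m$ contribute at most $A_m\lambda$ for any fixed $c_m>0$. For small scales, we pass to the Bargmann transform and use Tataru's phase-space representation. In these coordinates $\chi_\lambda\chi_\lambda^*$ becomes, up to acceptable errors, an average over geodesic flow for times $|t|<\epsilon$, with symbol localized near $|\xi|=\lambda$. Restricted to $\Sigma$, the oscillation $e^{\pm i\lambda d}$ ties the direction $\omega$ of the geodesic from $x$ to $y$ to a conormal component of the phase-space variable. After integrating over the $(n-1)$-sphere of directions, the sum of shells $2^{-k}\in[\lambda^{-1},c]$ becomes $\lambda$ times a truncated operator of the form
\[
f\mapsto\int f(\gamma_t(x,\theta))\,\frac{dt}{t},
\]
where $\gamma_t(x,\theta)=\exp_x(t\theta)$ with $\theta\in T_x\Sigma$ unit. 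The odd part of $1/|t|$, that is the kernel $\operatorname{sgn}(t)/|t|=1/t$, gives a genuine Calderón--Zygmund singular Radon transform along a smooth family of curves parametrized by $\Sigma$. By Stein--Street's estimate \cite{SS}, this operator is bounded on $L^2$ uniformly in the truncation, provided the curvature condition (the Hörmander-type spanning condition) holds. The even part, which is where the logarithm actually comes from, has to be handled by showing that only a geometrically degenerate set contributes to it.

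\emph{Degenerate directions; where the constant $B/m$ comes from.} The main obstacle is that smooth curvature can vanish to infinite order, so the spanning condition fails on a set $Z$. I would decompose $\Sigma$ (or phase space) into the region where the vector fields span after $m$ commutators and its complement. On the good region, Stein--Street gives a bound $A_m\lambda$. The bad region is $Z_m$, the set where the family has contact of order at least $m$ with the flat model. On $Z_m$ I would refine the dyadic decomposition so that only about $(\log\lambda)/m$ shells retain full size. The idea is that contact of order $m$ allows the trivial bound to be saturated only over scale ranges of length $(\log\lambda)/m$ before the geodesics leave an $O(\lambda^{-1})$ tube; a rescaling/induction argument on the order gives this. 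Summing the two regions yields \eqref{eq:general-quant}. Letting $m\to\infty$ slowly as $\lambda\to\infty$ then gives \eqref{eq:general-littleoh}.

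\emph{Real-analytic case.} If $(M,g)$ and $\Sigma$ are analytic, the curvature quantities are analytic, so they either vanish identically or vanish to finite order. This gives a uniform finite $m_0$, so the bad set $Z_{m_0}$ is empty. In the identically vanishing case the geometry is totally geodesic-like, and there the log-free bound of \cite{WangZhang}, or the Ricci--Stein and Stein--Street theorems applied directly, gives $O(\lambda)$ for the squared norm. This yields \eqref{logfree}.

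The quantitative treatment of $Z_m$ with constant exactly $B/m$ is the step I expect to be hardest.
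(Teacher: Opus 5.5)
There is a genuine gap. The proposal never isolates the structure that makes an improvement possible, and the mechanism you put in its place would not give \eqref{eq:general-quant}.

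\emph{Where the logarithm lives.} Work on a directional piece with coordinates $x=(t,z)$, $y=(s,w)$ on $\Sigma$. There the principal kernel is $2\lambda^{\frac{n-1}2}d_g^{-\frac{n-1}2}a_0\cos\bigl(\lambda d_g-\frac{\pi(n-1)}4\bigr)$. Because $\sin$ is odd, this equals $\frac{\lambda}{i(t-s)}\bigl(V^+_{\lambda,0}-V^-_{\lambda,0}\bigr)$, where $V^\pm_{\lambda,0}$ carry the phases $\pm\lambda\sgn(t-s)d_g=\pm\lambda(t-s)\ell(t,s,z,v)$. These phases are smooth across $t=s$; this is \eqref{pid}. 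So the whole kernel is the odd Hilbert kernel $1/(t-s)$ times a smooth oscillatory factor.

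There is therefore no ``even part of $1/|t|$'' to confine to a degenerate set; the odd part of $1/|t|$ is zero, so that phrase is already confused. The log-free question is uniform boundedness of oscillatory Hilbert-type operators. The only source of loss is that the smooth phase is not polynomial or analytic.

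Two further steps also fail as stated.
\begin{enumerate}
\item For $n\geq4$, an absolute-value Schur test does not give $O(\lambda)$ per dyadic shell. The shell $d_g\sim2^{-k}$ contributes $\lambda^{\frac{n-1}2}2^{-k\frac{n-3}2}$. So even \eqref{bgtest} needs stationary phase and H\"ormander's $L^2$ theorem in the $n-3$ transverse variables (Lemmas~\ref{upper:kernel-construction} and~\ref{upper:transverse-L2}).
\item The operator $f\mapsto\int f(\exp_x(t\theta))\,dt/t$ is not the right model. Geodesics leave $\Sigma$, and the operator discards the oscillation. The paper instead writes $V^\pm_\lambda(t,s)=\W^*U_\lambda(t,s)\W$ up to an error of operator norm $O(|t-s|+\lambda^{-1/2})$, with the phase-space transport $U_\lambda(t,s)F=be^{i\lambda S}F(\gamma(t,s,\cdot))$. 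It then lifts $e^{i\lambda S}$ to the translation $u\mapsto u+S$ in an auxiliary variable. The oscillation thus becomes part of a genuine singular Radon transform with odd kernel $k_\lambda$.
\end{enumerate}

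\emph{Where $B/m$ comes from.} Your good/bad split cannot be carried out as described. $Z_m$ is not defined in a way that makes the claim about $(\log\lambda)/m$ surviving shells provable. Curvature-condition bounds of Christ--Nagel--Stein--Wainger type are not uniform as the spanning degenerates near $Z_m$. And closeness to the flat model is harmless: totally geodesic $\Sigma$ is log-free.

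The paper uses no curvature condition and no geometric case analysis. It splits at $|t-s|=\lambda^{-1/m}$.
\begin{enumerate}
\item The far part is bounded trivially by $C_m+\frac Bm\log\lambda$, via the uniform transverse bound \eqref{upper:Vnorm} and Schur's test, with $B$ independent of $m$.
\item For the near part, on phase-space cubes of side $\delta=c_m\lambda^{-1/m}$, the lifted curve family is replaced by a Taylor polynomial with error $O(\delta^m)$. This costs $\delta^m\norm{\nabla_\xi F}\lesssim\lambda^{-1}\norm{\nabla_\xi F}\lesssim\norm{f}$, by the Bargmann derivative bound \eqref{upper:Wbound}.
\item The resulting polynomial singular Radon transforms are bounded uniformly over the compact set of rescaled coefficients. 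This follows from Stein--Street's theorem for real-analytic families, which has no curvature hypothesis.
\end{enumerate}
That is the missing idea: compare with Taylor polynomials at scale $\lambda^{-1/m}$, not with the flat model, and use coefficient-uniform bounds for bounded-degree polynomial families.

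The same theorem, applied directly to the analytic lifted family, gives \eqref{logfree}. Your analytic dichotomy is unproved. The rank of the generated Lie algebra can drop on an analytic subvariety, so neither uniform finite type nor identical flatness need hold. Moreover, \cite{WangZhang} does not cover general analytic $\Sigma$ when $n\geq4$.
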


	\begin{mainthm}\label{cor:product-lower}
		Let $\Sph^d=\{x\in\R^{d+1}:|x|=1\}$ be the standard round sphere, identify $\Sph^2$ with
		$S^3\cap\{x_4=0\}$, and set
		$\lambda_\nu=\sqrt{\nu^2-1}$. Fix a smooth closed Riemannian
		manifold $Y$ of dimension $n-3$ and equip $M=S^3\times Y$ with
		the product metric $g$.  In each part below, submanifolds are fixed, closed, embedded, contained
		in $\Sph^2\times Y$, and of codimension two in $M$. Sequences
		$(\nu_j,e_{\lambda_{\nu_j}})$ consist of strictly increasing integers $\nu_j\geq3$ and
		real normalized eigenfunctions satisfying \eqref{eigeq}.
		\begin{enumerate}
			\item[(i)] For every integer $K\geq1$, there are a $C^K$ submanifold
			$\Sigma_K$ and a sequence $(\nu_j,e_{\lambda_{\nu_j}})$ such that
			\begin{equation}\label{eq:product-finite}
				c_K\lambda_{\nu_j}^{1/2}\sqrt{\log\lambda_{\nu_j}}
				\leq\|e_{\lambda_{\nu_j}}\|_{L^2(\Sigma_K)}
				\leq C_K\lambda_{\nu_j}^{1/2}\sqrt{\log\lambda_{\nu_j}}.
			\end{equation}
			Here $0<c_K\leq C_K$ depend only on $K$. 
			
			\item[(ii)] For every $L:[2,\infty)\to(0,\infty)$ with
			$L(\lambda)=o(\sqrt{\log\lambda})$, there are a smooth submanifold
			$\Sigma$ and a sequence $(\nu_j,e_{\lambda_{\nu_j}})$ such that
			\begin{equation}\label{eq:product-no-rate}
				\frac{\|e_{\lambda_{\nu_j}}\|_{L^2(\Sigma)}}
				{\lambda_{\nu_j}^{1/2}L(\lambda_{\nu_j})}\geq j.
			\end{equation}
		\end{enumerate}
	\end{mainthm}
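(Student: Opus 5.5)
The strategy is to exploit zonal-type eigenfunctions on $S^3$ that concentrate along a great circle, and to choose $\Sigma$ inside $\Sph^2\times Y$ so that it is nearly tangent to the concentration set along a large range of scales. On $S^3$, take the Gaussian beam (highest weight spherical harmonic) $Q_\nu(x)=c_\nu(x_1+ix_2)^{\nu-1}$. With $-\Delta Q_\nu=(\nu^2-1)Q_\nu$, we have $\lambda_\nu=\sqrt{\nu^2-1}$ and $c_\nu\sim\nu^{1/2}$, and $|Q_\nu|^2\approx\nu\, e^{-\nu\,\mathrm{dist}(x,\gamma)^2}$ with $\gamma=\{x_3=x_4=0\}$. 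The real part of $Q_\nu$ gives a real eigenfunction. On $M=S^3\times Y$ we take $e=Q_\nu\otimes\phi_0$ with $\phi_0$ the constant normalized function on $Y$. Then the problem reduces to three dimensions: for a curve $\Gamma\subset\Sph^2\subset S^3$ and $\Sigma=\Gamma\times Y$, we have $\|e\|_{L^2(\Sigma)}^2\approx\int_\Gamma|Q_\nu|^2\,ds$.

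A log would arise if the restriction to $\Gamma$ of the weight $\nu e^{-\nu d^2}$ sees many dyadic scales equally. This is a single Gaussian, though, so a single beam does not give $\log$. I would instead use a superposition over a family of great circles $\gamma_\theta\subset S^3$, all passing through $\Gamma$'s relevant region with varying tilt $\theta$ out of $\Sph^2$ (rotating in the $x_4$ direction), and combine the beams $e=\sum_k a_k Q^{(\theta_k)}_\nu$ at dyadically spaced angles $\theta_k\sim2^{-k}$, $k\le \tfrac12\log_2\nu$. The construction follows the Montgomery-Smith/Carbery--Hofmann scheme. Since the beams are almost orthogonal when $|\theta_k-\theta_{k'}|\gg\nu^{-1/2}$, we get $\|e\|_2^2\approx\sum|a_k|^2$. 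The curve $\Gamma$ is chosen so that near the point $p_k$ on $\Gamma$ it hugs $\gamma_{\theta_k}\cap\Sph^2$ over arclength $\sim1$ at distance $\lesssim\nu^{-1/2}$. Then each beam contributes $\sim\nu$ to $\int_\Gamma|\cdot|^2$ on disjoint pieces, giving $\int_\Gamma|e|^2\gtrsim\nu\sum|a_k|^2\cdot(\text{overlap gain})$. The $\log$ must come from constructive coherence: taking $a_k=1/\sqrt N$ with $N\sim\log\nu$ and arranging that on a common arc of $\Gamma$ all beams are in phase and have comparable size, so that $|e|^2\sim N\cdot\nu\,\|e\|^2$ there on a set of length $\sim N^0$. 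This yields $\|e\|_{L^2(\Sigma)}^2\gtrsim\nu\log\nu$. The realizing geometry is that $\Gamma$ is tangent to infinite order (for (ii)) or to order $K$ (for (i)) to a family of great-circle directions accumulating at one point. A $C^K$ curve can osculate a geodesic at rate $t^{K}$, and the shells $|t|\sim2^{-k}$ contribute comparable amounts; this is the mechanism for $\sqrt{\log}$ in (i), with the $C^K$ norm bounding $c_K,C_K$. The upper bound in (i) follows from \eqref{bgtest} (valid for $C^{2,1}$, hence $C^K$, $K\ge3$; small $K$ is handled by choosing a smoother curve anyway).

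For (ii), given $L=o(\sqrt{\log\lambda})$, pick frequencies $\nu_j$ increasing so rapidly that $L(\lambda_{\nu_j})\le j^{-1}\sqrt{\log\lambda_{\nu_j}}\,\epsilon_j$. Then build a smooth $\Gamma$ as a sum of disjointly supported bumps at points $p_j\to p_\infty$. The $j$th bump is a $C^{K_j}$-type osculating piece designed for frequency $\nu_j$, with amplitude scaled so the $C^m$ norms are summable for each $m$. This makes $\Gamma$ smooth, while each piece still produces $\|e_{\lambda_{\nu_j}}\|_{L^2(\Gamma\times Y)}\gtrsim \lambda_{\nu_j}^{1/2}\sqrt{\log\lambda_{\nu_j}}\,\delta_j$, with $\delta_j$ decaying slowly enough. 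The beams are rotated so they concentrate at the $j$th piece, and contributions from the other pieces are nonnegative, so they can be ignored for the lower bound.

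The main obstacle is the sharp lower bound in (i): showing that the restriction of the chosen eigenfunction to the finitely-tangent curve genuinely gathers $\log\lambda$ worth of mass. This means verifying that cross terms between different scales do not cancel, which requires controlling the phases of the beams along $\Gamma$. I would first try real eigenfunctions of the form $\mathrm{Re}$ of a single highest-weight harmonic along a great circle $\gamma$ lying in $S^3$ but meeting $\Sph^2$ only tangentially to order $K$. Restricting the weight $\nu e^{-\nu d(s)^2}$ with $d(s)\sim |s|^{K}$ gives only $\nu^{1-1/(2K)}$, however, which is too small. So I expect the genuine argument to need the averaged family of tilted beams (an $SO(2)$-average in the $x_3x_4$ plane, producing a zonal-type function with $|e|^2\sim\nu/(\nu^{1/2}d)$ decay). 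Restricted to a surface $\Sph^2$-curve lying near the axis on dyadic shells, such a function yields $\int\nu\, ds/ (\nu^{1/2}|s|)\cdot\nu^{1/2}$, i.e. $\nu\log\nu$ over $\nu^{-1/2}\lesssim|s|\lesssim1$. This zonal averaging is what I would pursue first.
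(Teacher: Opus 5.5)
\textbf{There is a genuine gap.} The local lower bound $\norm{u}_{L^2(\Gamma)}^2\gtrsim\nu\log\nu$ is the whole content of the theorem, and none of the three mechanisms you sketch produces it.

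\emph{Coherent beams.} Two great circles that stay within the beam width $\nu^{-1/2}$ of each other along an arc of length $\sim1$ span $2$-planes within $O(\nu^{-1/2})$ of each other. So beams tilted by $\theta_k\sim2^{-k}\gg\nu^{-1/2}$ cannot all be large on a common unit arc. Conversely, beams that \emph{are} comparably large and in phase on a common tube of length $\sim1$ and width $\nu^{-1/2}$ are not almost orthogonal: $N^{-1/2}\sum_kQ_k$ would carry mass $\sim N\nu\cdot\nu^{-1}=N$ in that tube, which cancels the gain.

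\emph{Single tangent beam.} As you note yourself, a single beam on a $K$-fold tangent curve gives only $\nu^{1-1/(2K)}$.

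\emph{Averaged beam.} Even granting $|e|^2\sim\nu^{1/2}/d$, the restriction is $\int_{\nu^{-1/2}}^1\nu^{1/2}\,ds/s\sim\nu^{1/2}\log\nu$. The extra factor $\nu^{1/2}$ in your integral has no source, so this is worse than a single beam on a geodesic.

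More fundamentally, the logarithm is not a pointwise-concentration effect. It comes from interference in the $TT^*$ kernel $\nu\sin(\nu d)/\sin d$ restricted to the curve, i.e.\ from the non-uniform boundedness of an oscillatory Hilbert-type operator.

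\textbf{The missing idea is not to build the eigenfunction by hand.} In Lemma~\ref{lem:adjoint}, $u$ is the top eigenvector of $R_{\nu,A}^*R_{\nu,A}$ on $\HH_{\nu-1}$; it is real because the operator is real. It satisfies $\norm{u}_{L^2(A)}^2\ge\ip{R_{\nu,A}R_{\nu,A}^*f}{f}$ for every unit $f$, and the kernel is the exact projector \eqref{eq:sphere-kernel}.

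The paper takes $A$ to be a graph of height $\sqrt{r/\nu}\,q(s)$ over an equatorial arc of length $r$. This is a perturbation at the Fresnel scale, not a tangency condition. It tests against $f=v^{-1/2}\phi(s)e^{i\nu rs}$. Then:
\begin{itemize}
\item $\nu d=\nu rh+P(s,t)+O\bigl(h(r^2+N^2/(\nu r))\bigr)$ with $P=(q(s)-q(t))^2/(2h)$;
\item the term oscillating like $e^{2i\nu rh}$ is $O(1)$ by integration by parts;
\item hence $2\pi^2\nu^{-1}\ip{RR^*f}{f}=I_0(q)+O(1)$, and $I_0$ carries the Montgomery-Smith/Carbery--Hofmann phase.
\end{itemize}
The lacunary profile $q_N=\varepsilon\chi\sum_{k\le\log_2N}2^{-k/2}\cos(8\pi2^ks)$ makes each dyadic frequency contribute a fixed amount. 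This gives $I_0(q_N)\gtrsim\log N$ while $\norm{q_N^{(m)}}_{L^\infty}\lesssim N^{m-1/2}$.

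Regularity then enters quantitatively through the rescaling $z_j(t)=N_j^{-2K-1/2}q_{N_j}(N_j(t-t_j))$ with $\nu_j=N_j^{4K}$. This keeps $\norm{z_j}_{C^K}\lesssim N_j^{-1}$ while giving $\log N_j=\frac1{4K}\log\nu_j$. With $K=j$, it supplies exactly the slowly decaying factor $\delta_j^2=a/(4j)$ that your part (ii) needs.

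Your gluing scheme for (ii) is fine in outline: disjoint bumps, diagonal choice of frequencies, summable $C^m$ norms, and positivity of the restriction integral. So is your upper bound for (i), via the $C^{2,1}$ estimate or directly by Schur's test with $|\Pi_{\nu-1}|\lesssim\min\{\nu^2,\nu/|s-t|\}$. The lower bound in (i) and all of (ii), however, rest on the missing local lower bound.
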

	
	Thus, no prescribed replacement
	$L=o(\sqrt{\log\lambda})$ gives a bound valid for all smooth $\Sigma$. 
	In particular,  Theorem~\ref{cor:product-lower}(ii) gives one smooth $\Sigma$ and one
	sequence $(\nu_j,e_{\lambda_{\nu_j}})$ such that
	\[
	\frac{\|e_{\lambda_{\nu_j}}\|_{L^2(\Sigma)}}
	{\lambda_{\nu_j}^{1/2}(\log\lambda_{\nu_j})^{\alpha}}
	\longrightarrow\infty
	\qquad\text{for every fixed }\alpha<\frac12.
	\]

	The logarithmic behavior at the codimension-two endpoint has a close
	parallel in endpoint Strichartz estimates. For the two-dimensional
	Schr\"odinger equation, the endpoint Strichartz estimate fails, while
	a localized version holds with a logarithmic loss. After a $TT^*$
	argument, both problems exhibit the borderline singularity
	$(t-s)^{-1}$. Moreover, the phase appearing in the
	counterexample of Montgomery--Smith \cite{MontgomerySmith} has a close
	analogue in our lower-bound construction. See also Tao \cite{Tao}.
	
	A related regularity phenomenon occurs for oscillatory singular
	integrals. Stein--Wainger \cite{SteinWainger} obtained uniform bounds
	for an oscillatory singular integral with polynomial phases, while Zhang--Zhu \cite{ZhangZhuSW} showed that
	the corresponding integral is
	$O(\log\lambda)$ for finite $C^K$ phases but $o(\log\lambda)$ for each
	smooth phase. Theorems~\ref{thm:general} and~\ref{cor:product-lower}
	show an analogous distinction for codimension-two restriction. See also Nagel--Wainger \cite{nw}, Nagel--Vance--Wainger--Weinberg \cite{nvww}, Phong--Stein \cite{ps86}, Ricci--Stein \cite{RicciStein}, Pan \cite{pan91,pan93,Pan}, Seeger \cite{seeger94}, Carbery--P\'erez \cite{cp99}.

	\subsection*{Proof outline}
Section~\ref{sec:general-n} proves Theorem~\ref{thm:general} by a
localized $TT^*$ argument.
Stationary phase in the $n-3$ transverse
variables reduces the problem to oscillatory Hilbert-type operators,
up to an $O(\lambda)$ error in $L^2$ operator norm. We split the
resulting kernel at $|t-s|=\lambda^{-1/m}$. The far part is bounded by
$Cm^{-1}\log\lambda$ using uniform transverse $L^2$ estimates and
Schur's test. For the near part, we use the Bargmann transform and Tataru's phase-space representation \cite{TataruPhaseSpace} to  reduce the problem to the
singular Radon transform estimate of Stein--Street
\cite[Corollary~5.6]{SS}. This yields a bound $C_m$ for the
near part.  Moreover, we apply the same
theorem directly to the real-analytic maps to obtain \eqref{logfree}. Appendix~\ref{sec:general} gives a direct proof for the smooth case in dimension
three using the estimate of Ricci--Stein \cite{RicciStein}. 

Section~\ref{sec:finite-proof} proves Theorem~\ref{cor:product-lower} by constructing curves on the round sphere $S^3$. The construction is inspired by the counterexamples in Montgomery--Smith \cite{MontgomerySmith}.
The exact spherical
projector and lacunary trigonometric profiles produce logarithmic
lower bounds on short graph arcs.   For part~(i), rescaling and gluing
these arcs yields a $C^K$ closed embedded curve attaining the full
$\sqrt{\log\lambda}$ loss, while Schur's test gives the matching upper
bound. For part~(ii), the scales and frequencies are chosen according
to $L$ to produce a smooth closed embedded curve whose restriction
norms exceed $\lambda^{1/2}L(\lambda)$ by arbitrarily large factors
along a sequence of frequencies. Taking products with $Y$ extends the
constructions to all dimensions $n\geq3$.

	\subsection*{Notation}
	Constants $C,c>0$ may change from line to line. Subscripts record allowed
	additional dependence. 
	We write $O_{C^N}(a)$ for a function whose
	$C^N$ norm is at most $C_Na$. For scalar functions, subscripts denote partial derivatives:
	$\Phi_{x\eta}=(\partial_{x_i}\partial_{\eta_j}\Phi)_{ij}$
	is the mixed Hessian. For a differentiable map, $\partial_\xi$ denotes its Jacobian
	with respect to $\xi$.
	
	\subsection*{Acknowledgments.}
	The authors would like to thank Christopher Sogge for  earlier discussions.  C.Z. is partially supported by National Key R\&D Program of China No. 2024YFA1015300 and NSFC Grant No. 12371097. X.W. is partially supported by NSFC Grant No. 12671119. 
	
	\renewcommand{\theequation}{\thesection.\arabic{equation}}
	
	\section{Proof of Theorem 1}
	\label{sec:general-n}
	
	Write $g_\Sigma=g|_{T\Sigma}$ for its induced metric,
	$\dd V_g$ and $\dd\sigma$ for the Riemannian measures on $M$ and $\Sigma$.
	For $d\in\N_0$ and an
	operator-valued kernel $\mathcal K(t,s):L^2(\R^d)\to L^2(\R^d)$, define
	\[
	(\mathcal S f)(t)=\int_\R\mathcal K(t,s)f(s)\dd s.
	\]
	Then we use Schur's test in the following form
	\[
		\norm{\mathcal S}_{L^2(\R^{d+1})\to L^2(\R^{d+1})}
		\leq\Big(\sup_t\int_\R
		\norm{\mathcal K(t,s)}_{L^2(\R^d)\to L^2(\R^d)}\dd s\Big)^{\frac12}
	\Big(\sup_s\int_\R
		\norm{\mathcal K(t,s)}_{L^2(\R^d)\to L^2(\R^d)}\dd t\Big)^{\frac12}.
	\]
	The scalar case is $d=0$, with the inner norms replaced by absolute values.
	
	\subsection{The \texorpdfstring{$TT^*$}{TT*} reduction}
	\label{sec:upper-tt}
	
	Let $d_g$ be the Riemannian distance, $P=\sqrt{-\Delta_g}$, and
	$\RR f=f|_\Sigma$. Choose real even $\varrho\in\mathcal S(\R)$ with
	$\varrho(0)=1$ and sufficiently small $\supp\widehat\varrho$.
	Let $\chi=\varrho^2$ and
	$T_\lambda=\RR\varrho(\lambda-P):L^2(M)\to L^2(\Sigma)$.
	Consequently,
	\[
	T_\lambda e_\lambda=e_\lambda,\qquad
	T_\lambda T_\lambda^*=\RR\chi(\lambda-P)\RR^*.
	\]
	Fix a finite family of real coordinate cutoffs $\beta$ with
	$\sum_\beta\beta^2=1$ on $\Sigma$.  Let $\mathcal K_\lambda$ be the
	coordinate representation of $\beta T_\lambda T_\lambda^*\beta$ on $L^2$, with the density factors absorbed into its kernel
	$K_\lambda(x,y)$.  Then
	\[
	\norm{\mathcal K_\lambda}_{L^2(\R^{n-2})\to L^2(\R^{n-2})}
	=\norm{\beta T_\lambda}_{L^2(M)\to L^2(\Sigma)}^2,
	\qquad
	\norm{T_\lambda f}_{L^2(\Sigma)}^2
	=\sum_\beta\norm{\beta T_\lambda f}_{L^2(\Sigma)}^2.
	\]
	
	\subsection{The spectral kernel and transverse stationary phase}
	\label{sec:upper-kernel}
	
	Fix a sufficiently small $r_0\in(0,1)$.  For fixed
	$b\in C_c^\infty(\R^{2n-4})$ and a real smooth phase $\Phi$, define
	\begin{equation}\label{upper:Vdef}
		[V_\lambda(t,s)h](z)
		=\left(\frac\lambda{2\pi}\right)^{n-3}
		\iint_{\R^{n-3}\times\R^{n-3}}
		e^{i\lambda(\Phi(t,s,z,\eta)-w\cdot\eta)}
		b(t,s,z,\eta)h(w)\dd w\dd\eta,
	\end{equation}
	where $t,s\in\R$, $z,w,\eta\in\R^{n-3}$, and, for $|t-s|\leq r_0$,
	\begin{equation}\label{upper:phase}
		\Phi(t,s,z,\eta)=z\cdot\eta+(t-s)\phi(t,s,z,\eta),
		\qquad
		\norm{\Phi_{z\eta}-I}_{\R^{n-3}\to\R^{n-3}}<\tfrac12.
	\end{equation}
	Fix even $\chi_0,\chi_1\in C_c^\infty(\R)$ with
	$0\leq\chi_0,\chi_1\leq1$, $\chi_0=1$ near zero,
	$\supp\chi_0\subset[-r_0,r_0]$, $\chi_1=1$ on $[-1,1]$, and
	$\supp\chi_1\subset[-2,2]$.  For a fixed sufficiently large $C_0\geq2$,
	depending only on the geometric data and cutoffs, set
	\[
	k_\lambda(r)=\frac{\chi_0(r)(1-\chi_1(\lambda r/C_0))}{r},
	\qquad r\ne0,\quad k_\lambda(0)=0,\quad\lambda\geq1.
	\]
	The cutoffs and $C_0$ are independent of $m,\lambda$.
	The associated operator on $L^2(\R^{n-2})$ is
	\begin{equation}\label{upper:Hdef}
		[H_\lambda f](t,z)
		=\int_\R k_\lambda(t-s)
		[V_\lambda(t,s)f(s,\cdot)](z)\dd s.
	\end{equation}
	When $n=3$, the transverse variables and integrals are omitted, and
	empty determinants are interpreted as $1$.

\begin{proposition}[Transverse factorization]\label{upper:transverse-factorization}
		After shrinking the coordinate patches, there are finitely many pairs
		$V_{\lambda,j}^\pm$, $1\leq j\leq J$, of the form
		\eqref{upper:Vdef}--\eqref{upper:phase}, with associated operators
		$H_{\lambda,j}^\pm$ defined by \eqref{upper:Hdef}, such that
		\begin{equation}\label{upper:decomposition}
			\mathcal K_\lambda
			=\sum_{j=1}^J\frac\lambda i
			(H_{\lambda,j}^+-H_{\lambda,j}^-)+\mathcal E_\lambda,
			\qquad
			\norm{\mathcal E_\lambda}_{L^2(\R^{n-2})\to L^2(\R^{n-2})}
			\leq C\lambda.
		\end{equation}
		Each summand is expressed in fixed orthogonal coordinates and then
		transported back to the original patch.  The coordinate changes, phases,
		amplitudes, $J$, and $C$ are independent of $m$ and $\lambda\geq2$.
	\end{proposition}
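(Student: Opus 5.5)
We begin from the standard Lax parametrix for $\chi(\lambda-P)$. Since $\chi=\varrho^2$ is even with $\widehat\chi$ supported in a small interval, we write
\[
\chi(\lambda-P)=\frac1{2\pi}\int\widehat\chi(\tau)e^{i\tau\lambda}e^{-i\tau P}\dd\tau .
\]
Modulo an operator with $O(\lambda^{-N})$ smoothing kernel, the kernel is
\[
\chi(\lambda-P)(x,y)=\lambda^{\frac{n-1}2}\sum_\pm a_\pm(\lambda,x,y)\,e^{\pm i\lambda d_g(x,y)}+O(\lambda^{-N}),
\]
for $d_g(x,y)\geq C_0/\lambda$. Here $a_\pm$ are symbols in $\lambda$ satisfying $|\partial^\alpha a_\pm|\lesssim d_g(x,y)^{-\frac{n-1}2}$ with the usual loss of powers of $d_g$. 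For $d_g(x,y)\leq C_0/\lambda$ the kernel is simply $O(\lambda^{n-1})$.

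Restricting to $\Sigma\times\Sigma$ and inserting $\beta(x)\beta(y)$ with density factors gives $K_\lambda(x,y)$ on $\R^{n-2}\times\R^{n-2}$. The near-diagonal piece $d_g\lesssim C_0/\lambda$ has Schur norm $\lambda^{n-1}\cdot\lambda^{-(n-2)}=\lambda$, so it goes into $\mathcal E_\lambda$. The same holds for the part where $|x-y|\geq r_0$: there the kernel is $O(\lambda^{(n-1)/2})$ times a smooth oscillatory factor, and either a crude bound or the $TT^*$ bound for large separation, which is again at most $C\lambda$, handles it. The cutoff $\chi_0$ will encode this.

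Next we choose coordinates. We shrink patches so that on each one, after a fixed orthogonal change of coordinates on $\R^{n-2}$, we can write $x=(t,z)$, $y=(s,w)$ with $t,s\in\R$ and $z,w\in\R^{n-3}$. Since $|x-y|$ is comparable to $d_g$ on $\Sigma$, we cover the sphere of directions $(x-y)/|x-y|\in S^{n-3}$ by finitely many conic sectors. In each sector one coordinate, say $t-s$, dominates: $|t-s|\gtrsim|x-y|$. A conic partition of unity in $x-y$, homogeneous of degree $0$, then splits $K_\lambda$ into $J$ pieces, one per sector, each transported back via its rotation. This conic cutoff is not smooth at $x=y$, but that region already lies in the $O(\lambda)$ error. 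On each piece we expand $d_g(x,y)$ and the amplitude $\lambda^{\frac{n-1}2}|x-y|^{-\frac{n-1}2}$ as functions of $|t-s|$ and $(z-w)/(t-s)$.

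The key step is transverse stationary phase. We realize the $z,w$ dependence as a semiclassical Fourier integral operator of the form \eqref{upper:Vdef}, by writing
\[
e^{i\lambda d_g}\,\lambda^{\frac{n-1}2}|t-s|^{-\frac{n-1}2}=\frac\lambda{|t-s|}\Bigl(\frac\lambda{2\pi}\Bigr)^{n-3}\int e^{i\lambda(\Phi-w\cdot\eta)}b\dd\eta .
\]
Concretely, $\Phi(t,s,z,\eta)$ is the generating function $z\cdot\eta+(t-s)\phi$. One obtains it by Legendre-transforming $w\mapsto\pm d_g((t,z),(s,w))$ with $t-s$ fixed. The function $w\mapsto d_g$ has Hessian of size $|t-s|^{-1}$ in $w$ (nondegenerate, a perturbation of $|z-w|^2/2|t-s|$ for small $|z-w|/|t-s|$). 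The inverse stationary phase in $\eta$ then produces exactly the factor $|t-s|^{\frac{n-3}2}\lambda^{-\frac{n-3}2}$ needed to convert $\lambda^{\frac{n-1}2}|t-s|^{-\frac{n-1}2}$ into $\lambda|t-s|^{-1}$. The sign $\pm$ gives $H^\pm$. The factor $1/i$ and the difference $H^+-H^-$ come from the leading symbol of the wave parametrix, which is odd under conjugation. Any mismatch in that form goes into $b$.

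I expect this Legendre/inverse-stationary-phase step to be the main obstacle. It must be done uniformly down to $|t-s|\sim C_0/\lambda$, where $\lambda|t-s|$ is only bounded below, so the symbol $b$ has to be controlled uniformly in that regime. It must also respect the normalization $\norm{\Phi_{z\eta}-I}<\frac12$, which holds after shrinking $r_0$ since $\Phi=z\cdot\eta+O(|t-s|)$.

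Finally, the lower-order terms in the symbol expansion gain a factor $(\lambda|t-s|)^{-1}$. Using uniform $L^2$ bounds on the transverse operators $V_\lambda$, their contributions have operator kernels of norm $\lambda\cdot(\lambda|t-s|^2)^{-1}$ on $|t-s|\geq C_0/\lambda$. Schur's test in the operator-valued form stated above then bounds them by $C\lambda$, so they are absorbed into $\mathcal E_\lambda$. All choices made along the way are geometric and independent of $m$.
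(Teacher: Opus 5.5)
Your architecture matches the paper's: Schur on $|t-s|\lesssim C_0/\lambda$, a directional partition, a Legendre transform of the distance in the transverse variables, stationary phase in $\eta$ with parameter $\lambda|t-s|$, a $(\lambda|t-s|)^{-1}$ gain for remainders, and the operator-valued Schur test with $\int_{C_0/\lambda}^{r_0}r^{-2}\,dr\leq C\lambda$. However, the step you call the main obstacle is set up incorrectly at the diagonal, and that is where the content of the proposition lies.

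\textbf{The sign at $t=s$.} Since $d_g=|t-s|\,\ell(t,s,z,v)$ is not smooth across $t=s$, Legendre-transforming $w\mapsto +d_g$ at fixed $t-s$ gives the paper's $\Phi_+$ on $\{t>s\}$ but $\Phi_-$ on $\{t<s\}$. In the Euclidean model you get $z\cdot\eta+|t-s|\sqrt{1-|\eta|^2}$, so $\phi$ jumps at $t=s$. Moreover, your prefactor $\lambda/|t-s|$ is even, whereas $k_\lambda(t-s)$ in \eqref{upper:Hdef} is odd. The missing factor $\operatorname{sgn}(t-s)$ cannot be absorbed into the fixed smooth amplitude $b$, so ``any mismatch goes into $b$'' fails here.

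The correct construction transforms the smooth function $\pm(t-s)\ell$, i.e.\ $\eta=\pm\ell_v(t,s,z,v_\pm)$. Then $V^\pm$ carries $e^{\pm i\lambda\operatorname{sgn}(t-s)d_g}$, and $\Phi_{\pm,\eta\eta}=\mp(t-s)\ell_{vv}^{-1}$ has signature $\mp(n-3)\operatorname{sgn}(t-s)$. So for $t<s$ the roles of $V^+_{\lambda,0}$ and $V^-_{\lambda,0}$ are swapped, and this swap is exactly compensated by the odd factor $1/(t-s)$. One then verifies \eqref{pid}, namely $\frac{\lambda}{i(t-s)}(V_{\lambda,0}^+-V_{\lambda,0}^-)=2\lambda^{\frac{n-1}2}d_g^{-\frac{n-1}2}a_0\widetilde\psi(v)\sin\bigl(\lambda d_g-\frac{\pi(n-3)}4\bigr)=\widetilde\psi(v)K_{\lambda,0}$ on both sides of the diagonal. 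The $1/i$ comes from $e^{\mp i\pi(n-1)/4}=\mp i\,e^{\mp i\pi(n-3)/4}$.

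For the same reason, the directional cutoffs must be invariant under $x-y\mapsto y-x$. A one-sided cone gives the amplitude $\widetilde\psi(v)\mathbf 1_{t>s}$, which is not smooth across $t=s$; the singularity of the cutoff at $x=y$ is not the relevant issue. A smooth phase paired with an odd kernel is precisely what the later Stein--Street step requires.

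\textbf{Three further repairs.} First, on $|x-y|\geq r_0$ the crude Schur bound is $\lambda^{(n-1)/2}$, which exceeds $C\lambda$ for $n\geq4$, and the alternative ``$TT^*$ bound for large separation'' is not specified. Instead, shrink the patches, as the statement allows, so that $\chi_0(t-s)=1$ on all supports and this region does not occur.

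Second, the lower-order parametrix terms are physical-space kernels $\lambda^{\frac{n-3}2}|t-s|^{-\frac{n-3}2}e^{\pm i\lambda d_g}a\bigl(t,s,z,\frac{z-w}{t-s}\bigr)$, not operators of the form $V_\lambda$, so uniform bounds for $V_\lambda$ do not apply to them. You need a transverse $L^2$ bound by $\norm{a(t,s,\cdot,\cdot)}_{C^k}$: rescale to cubes of side $|t-s|$ and apply H\"ormander's theorem with parameter $\lambda|t-s|$, using $\ell_{vv}\geq cI$. You also need a kernel expansion with a $\lambda$-independent principal amplitude $a_0(x,y)$ and a remainder satisfying symbol bounds in $\rho=\lambda d_g$, so that the chain rule gives $C^k$ norms $O((\lambda|t-s|)^{-1})$. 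The bound $|\partial^\alpha a_\pm|\lesssim d_g^{-\frac{n-1}2-|\alpha|}$ alone does not isolate this gain.

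Third, $b$ must be fixed rather than ``controlled uniformly'' in $\lambda|t-s|$. Your inverse stationary-phase identity holds only modulo a remainder $E^\pm_\lambda$. This remainder consists of lower-order stationary-phase terms plus a non-stationary part where $(z-w)/(t-s)$ leaves the directional support. It belongs in $\mathcal E_\lambda$, with transverse norm $O((\lambda|t-s|)^{-1})$.
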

	
	The proof uses the next three lemmas.  In a coordinate patch
	$\Omega\subset\R^{n-2}$ containing $\supp\beta$, the localized kernel is
	\[
	K_\lambda(x,y)=\beta(x)\beta(y)
	\bigl(\det g_\Sigma(x)\det g_\Sigma(y)\bigr)^{1/4}
	[\chi(\lambda-P)](x,y),\qquad x,y\in\Omega,
	\]
	where the bracketed kernel is taken with respect to $\dd V_g$.
	Choose $\Omega$ and $\supp\widehat\chi$ sufficiently small for the
	short-time Hadamard parametrix, with $d_g^2$ smooth and $d_g<1$ on
	$\Omega\times\Omega$.

\begin{lemma}[Kernel expansion]\label{upper:local-spectral}
		For every $\lambda\geq2$,
		$\norm{K_\lambda}_{L^\infty(\Omega\times\Omega)}\leq C\lambda^{n-1}$.
		There is a fixed $a_0\in C_c^\infty(\Omega\times\Omega)$ such that,
		when $\lambda d_g(x,y)\geq2$,
		\begin{equation}\label{upper:spectral-remainder}
			\begin{gathered}
				K_\lambda(x,y)=K_{\lambda,0}(x,y)+K_{\lambda,1}(x,y)+S_\lambda(x,y),\\
				K_{\lambda,0}(x,y)
				=\lambda^{\frac{n-1}{2}}d_g(x,y)^{-\frac{n-1}{2}}a_0(x,y)
			\sum_\pm e^{\pm i\lambda d_g(x,y)\mp i\pi(n-1)/4},\\
				K_{\lambda,1}(x,y)=
				\lambda^{\frac{n-1}{2}}d_g(x,y)^{-\frac{n-1}{2}}
				\sum_\pm e^{\pm i\lambda d_g(x,y)}
				a_\lambda^\pm\bigl(x,y,\lambda d_g(x,y)\bigr).
			\end{gathered}
		\end{equation}
		Here 
		$S_\lambda$ is smooth, has fixed compact support in $\Omega\times\Omega$,
		and satisfies $\norm{S_\lambda}_{L^\infty(\Omega\times\Omega)}\leq C$.
		The smooth amplitudes $a_\lambda^\pm(x,y,\rho)$ have fixed compact $(x,y)$
		support and satisfy,
		\[
		\left|\partial_{x,y}^{\kappa}\partial_{\rho}^j
		a_\lambda^\pm(x,y,\rho)\right|
		\leq C_{\kappa,j}\rho^{-1-j},\qquad
		2\leq \rho\leq\lambda,\quad
		\kappa\in\mathbb N_0^{2n-4},\quad j\in\mathbb N_0.
		\]
		All constants are independent of $m,\lambda$.
	\end{lemma}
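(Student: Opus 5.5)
We will derive the lemma from the standard representation of $\chi(\lambda-P)$ through the half-wave group, combined with the Hadamard parametrix. Since $\chi$ is even, Schwartz, and has compactly supported Fourier transform, Fourier inversion gives
\[
\chi(\lambda-P)=\frac1{2\pi}\int\widehat\chi(t)e^{it\lambda}e^{-itP}\,dt .
\]
We rewrite this, modulo an operator with smooth kernel that is $O(\lambda^{-N})$, as $\frac1\pi\int\widehat\chi(t)e^{it\lambda}\cos(tP)\,dt$ minus the analogous term at $-\lambda$. The term at $-\lambda$ is $\chi(\lambda+P)$, whose kernel is $O(\lambda^{-N})$ together with all its derivatives. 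For $|t|$ in the small support of $\widehat\chi$, the Hadamard parametrix writes $\cos(tP)(x,y)$ as $\sum_{\nu\ge0}w_\nu(x,y)\,\partial_t E_\nu(t,d_g(x,y))$ plus a remainder of class $C^N$. Here the $E_\nu$ are the usual Riesz-type distributions, with Fourier representation $\int_{\R^n}e^{i x\cdot\xi}\cos(t|\xi|)|\xi|^{-2\nu}\,d\xi$ evaluated at $|x|=d_g$, and the coefficients $w_\nu$ are smooth because $d_g^2$ is smooth on $\Omega\times\Omega$. Substituting and carrying out the $t$-integral term by term gives
\[
[\chi(\lambda-P)](x,y)=\sum_{\nu<N}w_\nu(x,y)\int_{\R^n}\chi(\lambda-|\xi|)|\xi|^{-2\nu}e^{i\,\theta\cdot\xi\, d_g(x,y)}\,d\xi+R_N,
\]
where $\theta$ is any unit vector and $R_N$ is bounded. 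The main term is the radial Fourier transform of $\chi(\lambda-|\xi|)|\xi|^{-2\nu}$, evaluated at radius $d_g$.

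The bound $\|K_\lambda\|_{L^\infty}\le C\lambda^{n-1}$ follows directly from this formula: we integrate the absolute value of the amplitude over the shell $|\xi|\approx\lambda$, which has volume $\sim\lambda^{n-1}$. For the asymptotic regime $\lambda d_g\ge2$, we use polar coordinates and the Bessel representation
\[
\int_{\Sph^{n-1}}e^{ir\omega\cdot\theta}\,d\omega=c_n(r)^{-\frac{n-2}2}J_{\frac{n-2}2}(r).
\]
We insert the exact asymptotic expansion of the Bessel function in the form $r^{-\frac{n-1}2}\sum_\pm e^{\pm ir}\alpha_\pm(r)$, where $\alpha_\pm$ is a symbol of order $0$ whose leading term is the constant $c\,e^{\mp i\pi(n-1)/4}$. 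The remainder $\alpha_\pm-\alpha_\pm(\infty)$ is then a symbol of order $-1$. With $r=\rho d_g$ and $\rho=|\xi|$, this yields
\[
\int_0^\infty\chi(\lambda-\rho)\rho^{n-1-2\nu}(\rho d_g)^{-\frac{n-1}2}e^{\pm i\rho d_g}\alpha_\pm(\rho d_g)\,d\rho .
\]
We substitute $\rho=\lambda+\tau$, so that $e^{\pm i\rho d}=e^{\pm i\lambda d}e^{\pm i\tau d}$. Since $\chi$ is Schwartz, we may Taylor expand $\rho^{n-1-2\nu}(\rho/\lambda)^{-\frac{n-1}2}$ and $\alpha_\pm(\rho d)$ around $\tau=0$. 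The $\tau$-integral $\int\chi(-\tau)e^{\pm i\tau d}\,d\tau=\widehat\chi(\mp d)$, together with its moment variants, is smooth in $d$ and hence smooth in $(x,y)$. This step also fixes the support of $\widehat\chi$ relative to $d_g<1$, which is consistent with our choice of $\Omega$.

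Collecting terms, the contribution with $\nu=0$ and the leading Bessel coefficient gives $K_{\lambda,0}$ with
\[
a_0=c\,\beta\beta(\det\det)^{1/4}w_0(x,y)\,\widehat\chi(\mp d_g)\cdots,
\]
which is a fixed smooth compactly supported function. Since $\chi$ is even, $\widehat\chi(\pm d)$ agrees for both signs. All remaining terms go into $a_\lambda^\pm(x,y,\lambda d)$. These remaining terms are of three kinds: lower Bessel symbol terms, which are $O((\lambda d)^{-1})$ with the symbol estimates in $\rho=\lambda d$; Taylor corrections, which carry factors $\tau/\lambda$, so that after the $\tau$-integral we get $\lambda^{-1}\le\rho^{-1}$ since $d<1$; and terms with $\nu\ge1$, which carry $\lambda^{-2\nu}$ and are dominated in the same way. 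In the last two cases $\lambda$-dependence enters only through powers $\lambda^{-k}$, which we rewrite as $\rho^{-k}d^{k}$ with $d$ smooth in $(x,y)$. Tails with $\nu\ge N$ and the parametrix remainder go into $S_\lambda$ once $N$ is large enough, and this preserves the $C$ bound and the compact support.

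The main obstacle is bookkeeping, specifically putting the $\lambda$-dependence of the Taylor and $\nu\ge1$ corrections into the form $a^\pm_\lambda(x,y,\rho)$ with the symbol bounds $\rho^{-1-j}$ uniformly for $2\le\rho\le\lambda$. Writing $\lambda^{-1}=d/\rho$ makes $\partial_\rho$ behave correctly, but $\partial_{x,y}$ derivatives of $d=\rho/\lambda$ must be taken with $\rho$ as an independent variable. We handle this by defining $a_\lambda^\pm(x,y,\rho)$ to equal the Taylor coefficient expressions with $d_g(x,y)$ appearing smoothly, while using $\rho$ only through $\alpha_\pm(\rho)-\alpha_\pm(\infty)$ and through $\rho^{-k}$ factors. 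Smoothness of $d_g$ away from the diagonal is not needed here, because $d_g$ enters the amplitude only via $d_g^{k}$ with $k\ge1$, or via $d_g^2$ in $w_\nu$ and $\widehat\chi(d_g)$, which is even, and these are smooth.
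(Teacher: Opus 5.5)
Your overall route is the paper's: Fourier inversion to $\cos(tP)$, the Hadamard parametrix, polar coordinates, the two-term asymptotics of the spherical integral, and a principal amplitude $a_0\propto b_0\widehat\chi(d_g)$ that is smooth because $\widehat\chi$ is even. The gap is in constructing $a_\lambda^\pm(x,y,\rho)$ with the uniform bounds $|\partial_{x,y}^\kappa\partial_\rho^j a_\lambda^\pm|\le C_{\kappa,j}\rho^{-1-j}$.

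You route the $(x,y)$-dependence of $a_\lambda^\pm$ through $d_g(x,y)$. You justify smoothness by asserting that $d_g$, the powers $d_g^k$, and the moments $m_k(d)=\int\chi(\tau)\tau^k e^{\pm i\tau d}\,d\tau$ are smooth in $(x,y)$. This is false. Across the diagonal $d_g$ behaves like $|x-y|$, so only even functions of $d_g$ are smooth there. The functions $d_g^k$ for odd $k$ are not, and neither are the odd moments: since $\chi$ is even, $m_k(-d)=(-1)^k m_k(d)$. Their $(x,y)$-derivatives of order $|\kappa|$ grow like $d_g^{1-|\kappa|}$, which is up to $\lambda^{|\kappa|-1}$ on the region $\lambda d_g\ge2$ where the expansion is used.

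For the individual Taylor terms this happens to be harmless. In your bookkeeping $d^k$ always pairs with $m_k$, and $d^k m_k(d)$ is even. But you did not observe this. More importantly, the Taylor remainders that any finite-order expansion in $\tau$ leaves behind have no such parity, and you never place them anywhere.

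The paper avoids the issue by substituting in the opposite direction: it writes $d=\rho/\lambda$ instead of $\lambda^{-1}=d/\rho$, and never expands in $\tau$. It sets
$A^\pm_{\lambda,l}(\rho)=\lambda^{-l}\int\chi_*(\lambda-\tau)(\tau/\lambda)^{\frac{n-1}{2}-l}e^{\pm i(\tau-\lambda)\rho/\lambda}\bigl(1+c_\pm(\tau\rho/\lambda)\bigr)\,d\tau$.
Then $a_\lambda^\pm$ is a finite sum of smooth $b_*(x,y)$ times functions of $\rho$ alone, with $\lambda$ as a parameter, so all $(x,y)$-derivatives fall on $b_*$.

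The $\rho$-derivatives are bounded directly by the product rule:
\begin{itemize}
\item the power factor and the Schwartz tails contribute $\lambda^{-j-1}$;
\item the $c_\pm$ terms contribute $\lambda^{-(j-k)}\rho^{-1-k}$;
\item for $l\ge1$, $\lambda^{-l}\widehat{\chi_*}(\pm\rho/\lambda)$ contributes $O(\lambda^{-l-j})$.
\end{itemize}
Each of these is at most $C_j\rho^{-1-j}$ exactly because the lemma only requires $2\le\rho\le\lambda$. The decomposition is also exact on $\lambda d_g\ge2$, so no remainder is left to place.

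Two smaller points also need fixing. The radial integrals must be cut to $\tau\in[\lambda/2,2\lambda]$, with the $O(\lambda^{-\infty})$ tails sent to $S_\lambda$. One reason is that $|\xi|^{-2\nu}$ is not integrable at $0$ when $2\nu\ge n$. The other is that the Bessel asymptotics require $\tau d_g\gtrsim1$.
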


	\begin{proof}
		Fourier inversion gives
		\[
		\chi(\lambda-P)
		=\frac1\pi\int_\R e^{i\lambda t}\widehat\chi(t)\cos(tP)\dd t
		-\chi(\lambda+P).
		\]
		Fix the  truncation order sufficiently large that the
		integrated remainder is bounded, independently of $m,\lambda$.
		By \cite{hangzhou} or \cite[Lemma~1, (2.3)--(2.4)]{WangZhang} and polar coordinates,
		$K_\lambda-S_\lambda$ is a finite sum of terms
		\[
		b_*(x,y)\int_{\lambda/2}^{2\lambda}\chi_*(\lambda-\tau)\tau^{n-1-l}
		\int_{\Sph^{n-1}}e^{i\tau d_g(x,y)\omega_1}\dd\omega\dd\tau,
		\]
		where $b_*\in C_c^\infty(\Omega\times\Omega)$ and
		$\chi_*\in\mathcal S(\R)$ are fixed, and $l\in\mathbb N_0$.
		The unique term with $l=0$ has $(b_*,\chi_*)=(b_0,\chi)$.
		The discarded radial tails and $\chi(\lambda+P)$ are rapidly decreasing
		in $\lambda$, and the integrated Hadamard remainder is bounded.
		Thus $\norm{S_\lambda}_{L^\infty(\Omega\times\Omega)}\leq C$.
		Each displayed radial integral is smooth in $(x,y)$, since the spherical
		integral is a smooth function of $d_g(x,y)^2$. Hence $S_\lambda$ is smooth.
		Moreover,
		\[
		\begin{aligned}
			\int_{\lambda/2}^{2\lambda}|\chi_*(\lambda-\tau)|\tau^{n-1-l}
			\left|\int_{\Sph^{n-1}}e^{i\tau d_g(x,y)\omega_1}\dd\omega\right|\dd\tau
			\leq C\lambda^{n-1-l}\norm{\chi_*}_{L^1(\R)}\leq C\lambda^{n-1}.
		\end{aligned}
		\]
		This proves $\norm{K_\lambda}_{L^\infty(\Omega\times\Omega)}\leq C\lambda^{n-1}$.
		
		Stationary phase \cite[Section~1.1, (1.1.20), and Theorem~1.2.1]{SoggeFIO} gives
		\[
		\begin{gathered}
			\int_{\Sph^{n-1}}e^{i\rho\omega_1}\dd\omega
			=(2\pi)^{\frac{n-1}{2}}\rho^{-\frac{n-1}{2}}
			\sum_\pm e^{\pm i\rho\mp i\pi(n-1)/4}(1+c_\pm(\rho)),\\
			|c_\pm^{(j)}(\rho)|\leq C_j \rho^{-1-j},\qquad \rho\geq1,\quad j\geq0.
		\end{gathered}
		\]
		For each radial term, set
		\[
		A_{\lambda,l}^\pm(\rho)
		=\lambda^{-l}\int_{\lambda/2}^{2\lambda}\chi_*(\lambda-\tau)
		(\tau/\lambda)^{\frac{n-1}{2}-l}
		e^{\pm i(\tau-\lambda)\rho/\lambda}(1+c_\pm(\tau \rho/\lambda))\dd\tau.
		\]
		When $\lambda d_g(x,y)\geq2$, its contribution is
		\[
		(2\pi)^{\frac{n-1}{2}}\lambda^{\frac{n-1}{2}}
		d_g(x,y)^{-\frac{n-1}{2}}b_*(x,y)
		\sum_\pm e^{\pm i\lambda d_g(x,y)\mp i\pi(n-1)/4}
		A_{\lambda,l}^\pm(\lambda d_g(x,y)).
		\]
		For $2\leq \rho\leq\lambda$ and $\lambda/2\leq\tau\leq2\lambda$, we have
		\[
		\begin{aligned}
			\left|(\tau/\lambda)^{\frac{n-1}{2}-l}-1\right|
			&\leq C|\tau-\lambda|/\lambda,\\
			|\partial_{\rho}^k c_\pm(\tau \rho/\lambda)|
			&=(\tau/\lambda)^k|c_\pm^{(k)}(\tau \rho/\lambda)|
			\leq C_k \rho^{-1-k},\\
			\int_\R(1+|\tau-\lambda|)^{j+1}|\chi_*(\lambda-\tau)|\dd\tau
			&\leq C_j.
		\end{aligned}
		\]
		Since
		\[
		\int_\R\chi_*(\lambda-\tau)e^{\pm i(\tau-\lambda)\rho/\lambda}\dd\tau
		=\widehat{\chi_*}(\pm \rho/\lambda),
		\]
		the product rule and the Schwartz tails give
		\[
		\begin{aligned}
			\left|\partial_{\rho}^j\left[
			A_{\lambda,l}^\pm(\rho)-\lambda^{-l}\widehat{\chi_*}(\pm \rho/\lambda)
			\right]\right|\leq C_j\lambda^{-l}
			\left(\lambda^{-j-1}
			+\sum_{k=0}^j\lambda^{-(j-k)}\rho^{-1-k}\right)
			\leq C_j \rho^{-1-j}.
		\end{aligned}
		\]
		Here the first term bounds the error from the power factor and the tails,
		and the sum bounds the derivatives of $c_\pm$. For $l\geq1$,
		\[
		\left|\partial_{\rho}^j\left[
		\lambda^{-l}\widehat{\chi_*}(\pm \rho/\lambda)\right]\right|
		\leq C_j\lambda^{-l-j}\leq C_j \rho^{-1-j},
		\]
		so $|\partial_{\rho}^jA_{\lambda,l}^\pm(\rho)|\leq C_j \rho^{-1-j}$.
		For $l=0$, evenness of $\widehat\chi$ gives the principal amplitude
		\[
		a_0(x,y)=(2\pi)^{\frac{n-1}{2}}b_0(x,y)\widehat\chi(d_g(x,y)).
		\]
		It is smooth across the diagonal since $\widehat\chi$ is even and
		$d_g^2$ is smooth. Collect the $l=0$ differences
		$A_{\lambda,0}^\pm(\rho)-\widehat\chi(\pm \rho/\lambda)$ and the $l\geq1$
		terms, multiplied by $(2\pi)^{(n-1)/2}b_*(x,y)e^{\mp i\pi(n-1)/4}$,
		into $a_\lambda^\pm(x,y,\rho)$. At fixed $\rho$, their $(x,y)$ derivatives
		fall only on $b_*$, so
		\[
		|\partial_{x,y}^\kappa\partial_{\rho}^j a_\lambda^\pm(x,y,\rho)|
		\leq C_{\kappa,j}\rho^{-1-j},\qquad 2\leq \rho\leq\lambda.
		\]
		Substituting $\rho=\lambda d_g(x,y)$ proves \eqref{upper:spectral-remainder}.
	\end{proof}
	Fix a finite smooth partition of unity in the direction
	$(x-y)/|x-y|$, independent of $m,\lambda$, with each cutoff
	invariant under $x-y\mapsto y-x$ and supported in a pair of
	sufficiently small opposite cones. Fix an arbitrary piece and
	suppress its index. In suitable fixed orthogonal coordinates, write
	\[
	x=(t,z),\qquad y=(s,w),\qquad
	v=\frac{z-w}{t-s}\quad(t\ne s).
	\]
	The cutoff then has the form
	$\wt\psi(v)\in C_c^\infty(\{|v|<c_1\})$, with $c_1>0$ fixed.
	After shrinking the coordinate patch, the distance quotient
	defined below has a uniformly positive definite $v$-Hessian
	for $|v|\leq c_1$. The following argument applies to each piece
	in its corresponding coordinates, and the resulting operators
	are summed after returning to the original coordinates.
	For $n=3$, take $\wt\psi=1$.
	For $|v|\leq c_1$ and $|t-s|\leq r_0$, the distance quotient extends
	smoothly to $t=s$.  After shrinking the coordinate patch, it satisfies
	\begin{equation}\label{upper:distance}
		\begin{gathered}
			d_g\bigl((t,z),(s,z-(t-s)v)\bigr)
			=|t-s|\ell(t,s,z,v),\\
			\ell(t,t,z,v)=|(1,v)|_{g_\Sigma(t,z)},\qquad
			0<c\leq\ell\leq C,\qquad \ell_{vv}\geq cI.
		\end{gathered}
	\end{equation}
	Indeed, Taylor's formula for $d_g^2$ gives the smooth extension and
	its diagonal value.  At $t=s$, for
	$e\in\R^{n-3}\setminus\{0\}$,
	\[
	e^T\ell_{vv}e
	=\frac{|(0,e)|_{g_\Sigma}^2|(1,v)|_{g_\Sigma}^2
		-\langle(0,e),(1,v)\rangle_{g_\Sigma}^2}
	{|(1,v)|_{g_\Sigma}^3}>0.
	\]
	Compactness and a smaller coordinate patch give
	$0<c\leq\ell\leq C$ and $\ell_{vv}\geq cI$ on a fixed neighborhood
	of $|v|\leq c_1$.

\begin{lemma}[Transverse kernel]\label{upper:kernel-construction}
After a fixed refinement of the directional partition, each piece
admits operators $V_\lambda^\pm(t,s)$ of the form
\eqref{upper:Vdef}--\eqref{upper:phase}, with phases and amplitudes
independent of $m,\lambda$, whose kernels satisfy the following
identities and estimates for $\lambda\geq2$ and
$C_0/\lambda\leq|t-s|\leq r_0$.
	\begin{equation}\label{upper:transversekernels}
		\begin{gathered}
			V_\lambda^\pm=V_{\lambda,0}^\pm+E_\lambda^\pm,\\
			V_{\lambda,0}^\pm(t,s;z,w)
			=\lambda^{\frac{n-3}{2}}|t-s|^{-\frac{n-3}{2}}
			\ell^{-\frac{n-1}{2}}a_0(x,y)\wt\psi(v)
			e^{\pm i\lambda\sgn(t-s)d_g(x,y)
				\mp i\pi(n-3)\sgn(t-s)/4},
		\end{gathered}
	\end{equation}
	where $x=(t,z)$, $y=(s,w)$, $v=(z-w)/(t-s)$, and
	$\ell=\ell(t,s,z,v)$.
	Moreover,
	\[
	E_\lambda^\pm(t,s;z,w)
	=\lambda^{\frac{n-3}{2}}|t-s|^{-\frac{n-3}{2}}
	e^{\pm i\lambda\sgn(t-s)d_g(x,y)}
	a_{\lambda,1}^\pm(t,s,z,v)+F_\lambda^\pm(t,s;z,w),
	\]
	where $a_{\lambda,1}^\pm$ has fixed compact support where
	\eqref{upper:distance} holds, and, for every integer $k\geq0$,
	\[
	\begin{aligned}
		\norm{a_{\lambda,1}^\pm(t,s,\cdot,\cdot)}_{C^k(\R^{2n-6})}
		&\leq C_k(\lambda|t-s|)^{-1},\\
		|F_\lambda^\pm(t,s;z,w)|
		&\leq C_k\lambda^{n-3}
		(1+\lambda|t-s|+\lambda|z-w|)^{-k}.
	\end{aligned}
	\]
	For $n=3$, $E_\lambda^\pm=0$.
	The principal kernels satisfy
	\begin{equation}\label{pid}
		\frac{\lambda}{i(t-s)}
		\bigl(V_{\lambda,0}^+-V_{\lambda,0}^-\bigr)
		=\wt\psi(v)K_{\lambda,0}(x,y).
	\end{equation}
	All constants are independent of $m,\lambda$.
\end{lemma}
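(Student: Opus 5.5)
The plan is to build $V_\lambda^\pm$ as the transverse Fourier-integral representation of the principal kernel $V_{\lambda,0}^\pm$, with the amplitude chosen by stationary phase. I then check in the reverse direction, again by stationary phase, that the operator so defined reproduces $V_{\lambda,0}^\pm$ up to the stated errors. The identity \eqref{pid} is pure algebra, so I dispose of it first. Write $\sigma=\sgn(t-s)$, $\theta=\lambda d_g(x,y)-\pi(n-3)/4$, and use $d_g=|t-s|\ell$ from \eqref{upper:distance}. Then
\[
\frac{\lambda}{i(t-s)}\bigl(V_{\lambda,0}^+-V_{\lambda,0}^-\bigr)
=\lambda^{\frac{n-1}{2}}|t-s|^{-\frac{n-1}{2}}\ell^{-\frac{n-1}{2}}a_0\wt\psi\cdot\frac{\sigma}{i}\bigl(e^{i\sigma\theta}-e^{-i\sigma\theta}\bigr),
\]
and $\frac{\sigma}{i}(e^{i\sigma\theta}-e^{-i\sigma\theta})=2\sin\theta=\sum_\pm e^{\pm i(\lambda d_g-\pi(n-1)/4)}$. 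Since $\lambda^{\frac{n-1}{2}}|t-s|^{-\frac{n-1}{2}}\ell^{-\frac{n-1}{2}}=\lambda^{\frac{n-1}{2}}d_g^{-\frac{n-1}{2}}$, the right-hand side is exactly $\wt\psi(v)K_{\lambda,0}(x,y)$. For $n=3$ there are no transverse variables, so $V_\lambda^\pm=V_{\lambda,0}^\pm$ and $E_\lambda^\pm=0$.

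\emph{Constructing the phase.} I compute $\int e^{i\lambda w\cdot\eta}V_{\lambda,0}^+(t,s;z,w)\dd w$ formally, substituting $w=z-(t-s)v$, so that $\dd w=|t-s|^{n-3}\dd v$. The phase becomes
\[
\lambda\bigl(z\cdot\eta+|t-s|\,\ell(t,s,z,v)\,\sigma-(t-s)v\cdot\eta\bigr)
=\lambda\bigl(z\cdot\eta+(t-s)[\ell(t,s,z,v)-v\cdot\eta]\bigr),
\]
with effective large parameter $\lambda|t-s|\geq C_0$. Because $\ell_{vv}\geq cI$, the critical equation $\ell_v(t,s,z,v)=\eta$ has a unique smooth solution $v=v_*(t,s,z,\eta)$ for $\eta$ near $\ell_v(\{|v|<c_1\})$. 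I therefore define
\[
\phi^\pm(t,s,z,\eta)=\pm\bigl(\ell(t,s,z,v_*)-v_*\cdot\eta\bigr),
\]
a Legendre transform, with the sign adjusted so that the $-$ case uses $-\eta$. The amplitude $b^\pm$ is the leading stationary-phase coefficient: $(\det\ell_{vv})^{-1/2}\ell^{-(n-1)/2}a_0\wt\psi$ evaluated at $v_*$, times the Maslov factor $e^{\pm i\pi(n-3)/4}$ and a cutoff in $\eta$. The powers of $\lambda$ and $|t-s|$ cancel against the normalization $(\lambda/2\pi)^{n-3}$, so $b^\pm$ is independent of $\lambda$ and $m$. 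Shrinking $r_0$ gives $\Phi_{z\eta}-I=(t-s)\phi_{z\eta}=O(r_0)$, hence \eqref{upper:phase}. The "fixed refinement of the directional partition" is used so that $v\mapsto\ell_v$ is a diffeomorphism on each piece and the $\eta$-cutoff is compatible with $\supp\wt\psi$.

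\emph{Verification, the main step.} With $V_\lambda^\pm$ now defined by \eqref{upper:Vdef}, its kernel is $(\lambda/2\pi)^{n-3}\int e^{i\lambda(\Phi(t,s,z,\eta)-w\cdot\eta)}b\dd\eta$. Its $\eta$-gradient is
\[
\lambda\bigl(z-w+(t-s)\phi_\eta\bigr)=\lambda(t-s)\bigl(v\mp v_*(\eta)\bigr)
\]
by the Legendre duality $\phi_\eta=-v_*$. Where $v$ lies in a slightly enlarged support, I apply stationary phase with parameter $\lambda|t-s|$ and the nondegenerate Hessian $(t-s)\,\partial_\eta v_*=(t-s)\ell_{vv}^{-1}$. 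This returns $V_{\lambda,0}^\pm$ exactly, since the reverse Legendre transform recovers $\sigma d_g$ and the amplitudes were chosen to invert each other at leading order. The remaining terms have the form $\lambda^{\frac{n-3}{2}}|t-s|^{-\frac{n-3}{2}}e^{\pm i\lambda\sigma d_g}a_{\lambda,1}^\pm$ with $C^k$ norms $O((\lambda|t-s|)^{-1})$, by the standard remainder bounds with smooth dependence on $(t,s,z,v)$. Off that region the gradient satisfies $|\nabla_\eta|\gtrsim\lambda(|t-s|+|z-w|)$, because $|v|$ large forces $|z-w|\gg|t-s|$. Repeated integration by parts then gives the bound on $F_\lambda^\pm$, with the $\lambda^{n-3}$ coming from the prefactor, and a smooth partition in $v$ glues the two regions.

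The main obstacle is uniformity. The stationary phase must be carried out with parameter $\lambda|t-s|$, which can be as small as $C_0$, uniformly in $(t,s,z)$ and with symbol control of all $(z,v)$-derivatives. I would first rescale $w-z=(t-s)v$ and $\eta$, so that everything becomes a fixed-dimensional oscillatory integral with parameter $\mu=\lambda|t-s|\geq C_0$ and smooth dependence on $(t,s,z,v)$. I would then apply \cite[Theorem~1.2.1]{SoggeFIO} with the remainder estimate uniform over compact parameter sets, taking $C_0$ large enough that the non-stationary cutoff region is well separated.
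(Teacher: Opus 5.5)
Your construction matches the paper's. You define $v_\pm$ by $\eta=\pm\ell_v(t,s,z,v_\pm)$, take the Legendre-type phase $\Phi_\pm=z\cdot\eta+(t-s)(\pm\ell-v_\pm\cdot\eta)$, apply stationary phase in $\eta$ with parameter $\lambda|t-s|$ where $\psi(v)\neq0$, and integrate by parts where $1-\psi(v)\neq0$ using $|\partial_\eta\Phi_\pm-w|=|t-s||v-v_\pm|\geq c(|t-s|+|z-w|)$. Your trigonometric verification of \eqref{pid} is correct.

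The one step that is wrong as written is the amplitude.

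\emph{The Hessian sign.} The $\eta$-Hessian is $\Phi_{\pm,\eta\eta}=-(t-s)\partial_\eta v_\pm=\mp(t-s)\ell_{vv}^{-1}$, not $(t-s)\ell_{vv}^{-1}$. Its signature $\mp(n-3)\sgn(t-s)$ flips with $\sgn(t-s)$. Stationary phase therefore produces the factor $e^{\mp i\pi(n-3)\sgn(t-s)/4}$ of $V_{\lambda,0}^\pm$ automatically. Equivalently, in your forward computation the Maslov factor from the $v$-Hessian $\pm(t-s)\ell_{vv}$ cancels the one already present in $V_{\lambda,0}^\pm$.

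\emph{No Maslov factor in $b^\pm$.} With your $e^{\pm i\pi(n-3)/4}$ inserted, the leading term is $e^{\pm i\pi(n-3)/4}V_{\lambda,0}^\pm$ rather than $V_{\lambda,0}^\pm$. You also cannot repair this with a $\sgn(t-s)$-dependent factor, because $b$ must be smooth in $(t,s)$ across $t=s$ by \eqref{upper:Vdef}. That smoothness is used later, for instance through $|a(t,\tau,\xi)-a(t,0,\xi)|\leq C|\tau|$ in Proposition~\ref{upper:transport}. It is precisely the sign flip of the Hessian that reconciles the discontinuous factor in $V_{\lambda,0}^\pm$ with smooth $b^\pm$ and $\Phi_\pm$.

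\emph{The constant.} Matching $(\lambda/2\pi)^{n-3}(2\pi/(\lambda|t-s|))^{\frac{n-3}{2}}$ requires the factor $(2\pi)^{\frac{n-3}{2}}$. The correct choice is therefore $b^\pm=(2\pi)^{\frac{n-3}{2}}(\det\ell_{vv})^{-1/2}\ell^{-\frac{n-1}{2}}a_0\,\wt\psi$, evaluated at $v=v_\pm$, as in the paper.

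With this correction the rest of your argument goes through as you describe.
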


\begin{proof}
	For $n\geq4$, the bound $\ell_{vv}\geq cI$ gives smooth local
	inverses $v_\pm=v_\pm(t,s,z,\eta)\in\R^{n-3}$ determined by $	\eta=\pm\ell_v(t,s,z,v_\pm).$
	Define
	\[
	\begin{aligned}
		\Phi_\pm(t,s,z,\eta)
		&=z\cdot\eta+(t-s)(\pm\ell-v_\pm\cdot\eta),\\
		b^\pm(t,s,z,\eta)
		&=(2\pi)^{\frac{n-3}{2}}
	\sqrt{\det\ell_{vv}^{-1}}\ell^{-\frac{n-1}{2}}
			a_0((t,z),(s,z-(t-s)v_\pm))\wt\psi(v_\pm),
	\end{aligned}
	\]
	with $\ell,\ell_{vv}$ evaluated at $(t,s,z,v_\pm)$.
	Choose a fixed $\psi\in C_c^\infty(\R^{n-3})$, equal to one near
	$\supp\wt\psi$ and supported where this construction is valid.
	Extend $b^\pm$ by zero and the phase corrections smoothly with
	compact support, preserving the formulas for $v_\pm\in\supp\psi$
	on the coordinate support.
	
	Differentiation gives
	\[
	\begin{gathered}
		\partial_\eta v_\pm=\pm\ell_{vv}^{-1},\qquad
		\partial_\eta\Phi_\pm-w=(t-s)(v-v_\pm),\\
		\Phi_{\pm,\eta\eta}=\mp(t-s)\ell_{vv}^{-1},\qquad
		\norm{\Phi_{\pm,z\eta}-I}_{\R^{n-3}\to\R^{n-3}}
		\leq C|t-s|.
	\end{gathered}
	\]
	Thus \eqref{upper:phase} holds after decreasing $r_0$.
	At the critical point $\eta=\pm\ell_v(t,s,z,v)$,
	\[
	\Phi_\pm-w\cdot\eta
	=\pm(t-s)\ell
	=\pm\sgn(t-s)d_g(x,y),
	\]
	and the Hessian $\Phi_{\pm,\eta\eta}$ has signature
	$\mp(n-3)\sgn(t-s)$.
	The leading coefficient is
	\[
	\begin{aligned}
		&\left(\frac{\lambda}{2\pi}\right)^{n-3}
		\left(\frac{2\pi}{\lambda|t-s|}\right)^{\frac{n-3}{2}}
		b^\pm(t,s,z,\pm\ell_v(t,s,z,v))
		\sqrt{\det\ell_{vv}(t,s,z,v)}\\
		&\qquad
		=\lambda^{\frac{n-3}{2}}|t-s|^{-\frac{n-3}{2}}
		\ell^{-\frac{n-1}{2}}a_0(x,y)\wt\psi(v).
	\end{aligned}
	\]
	Stationary phase
	\cite[Corollary~1.1.8 and (1.1.20)]{SoggeFIO}
	with parameter $\lambda|t-s|$ therefore gives
	\[
	\psi(v)V_\lambda^\pm
	=V_{\lambda,0}^\pm
	+\lambda^{\frac{n-3}{2}}|t-s|^{-\frac{n-3}{2}}
	e^{\pm i\lambda\sgn(t-s)d_g(x,y)}
	a_{\lambda,1}^\pm(t,s,z,v),
	\]
	with the stated support and derivative bounds.
	
	Set $F_\lambda^\pm=(1-\psi(v))V_\lambda^\pm$.
	On $\supp b^\pm$, $v_\pm\in\supp\wt\psi$.
	Hence, where $1-\psi(v)\ne0$,
	\[
	\begin{aligned}
		|\partial_\eta\Phi_\pm-w|
		&=|t-s|\,|v-v_\pm|
		\geq c(|t-s|+|z-w|),\\
		|\partial_\eta^\alpha(\partial_\eta\Phi_\pm-w)|
		&\leq C_\alpha|t-s|,
		\qquad \alpha\in\mathbb N_0^{n-3},\quad |\alpha|\geq1.
	\end{aligned}
	\]
	Repeated integration by parts in $\eta$ gives the stated decay
	bound for $F_\lambda^\pm$.
	
	For $n=3$, take
	\[
	\Phi_\pm(t,s)=\pm(t-s)\ell(t,s),\qquad
	b^\pm(t,s)=\ell(t,s)^{-1}a_0(t,s).
	\]
	Then $V_\lambda^\pm=V_{\lambda,0}^\pm$.
	
	Finally, for every $n\geq3$, using $d_g(x,y)=|t-s|\ell$ gives
	\[
	\begin{aligned}
		\frac{\lambda}{i(t-s)}
		\bigl(V_{\lambda,0}^+-V_{\lambda,0}^-\bigr)
		&=2\lambda^{\frac{n-1}{2}}d_g(x,y)^{-\frac{n-1}{2}}
		a_0(x,y)\wt\psi(v)
		\sin\left(\lambda d_g(x,y)-\frac{\pi(n-3)}4\right)\\
		&=\wt\psi(v)K_{\lambda,0}(x,y),
	\end{aligned}
	\]
	which proves \eqref{pid}.
\end{proof}

\begin{lemma}[Transverse $L^2$ estimates]\label{upper:transverse-L2}
	Let $a(t,s,z,v)$ be smooth with fixed compact support where
	\eqref{upper:distance} holds. Define
		\begin{equation}\label{upper:transverse-operator}
			[\mathcal T_{\lambda,a}^\pm(t,s)h](z)
			=\lambda^{\frac{n-3}{2}}|t-s|^{-\frac{n-3}{2}}
			\int_{\R^{n-3}}e^{\pm i\lambda d_g((t,z),(s,w))}
			a\!\left(t,s,z,\frac{z-w}{t-s}\right)h(w)\dd w.
		\end{equation}
		For a fixed sufficiently large integer $k$,
		\begin{equation}\label{upper:transverse-estimate}
			\norm{\mathcal T_{\lambda,a}^\pm(t,s)}_{L^2(\R^{n-3})\to L^2(\R^{n-3})}
			\leq C\norm{a(t,s,\cdot,\cdot)}_{C^k(\R^{2n-6})},
			\qquad \lambda|t-s|\geq1.
		\end{equation}
		Let $\mathcal E_\lambda^\pm(t,s)$ be the integral operator with kernel
		$E_\lambda^\pm(t,s;z,w)$ from Lemma~\ref{upper:kernel-construction}.  Then
		\[
		\norm{\mathcal E_\lambda^\pm(t,s)}_{L^2(\R^{n-3})\to L^2(\R^{n-3})}
		\leq C(\lambda|t-s|)^{-1},\qquad \lambda|t-s|\geq C_0.
		\]
		Both estimates are uniform for $0<|t-s|\leq r_0$.
	\end{lemma}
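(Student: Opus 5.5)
The plan is to reduce \eqref{upper:transverse-estimate} to Hörmander's $L^2$ bound for nondegenerate oscillatory integral operators, after rescaling by the transverse length $|t-s|$. Fix $t,s$ and write $h=t-s$ and $\mu=\lambda|h|\geq1$. By \eqref{upper:distance}, with $v=(z-w)/h$,
\[
d_g((t,z),(s,w))=|h|\,\ell(t,s,z,v).
\]
We substitute $z=hz'$ and $w=hw'$, which is unitary on $L^2$ up to the factor $|h|^{n-3}$ from $\dd w$. The operator then becomes, unitarily equivalently, the operator with kernel
\[
\mu^{\frac{n-3}{2}}\,e^{\pm i\mu\,\ell(t,s,hz',\,z'-w')}\,a(t,s,hz',z'-w'),
\]
acting in $w'$. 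The powers of $|h|$ cancel exactly against the prefactor $\lambda^{\frac{n-3}{2}}|h|^{-\frac{n-3}{2}}$. So it suffices to show that this rescaled operator has norm at most $C\norm{a}_{C^k}$, uniformly in $\mu\geq1$ and $0<|h|\leq r_0$. For $n=3$ there is nothing to prove, since the "operator" is multiplication by a bounded scalar.

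The rescaled phase is $\varphi(z',w')=\ell(t,s,hz',z'-w')$. Its mixed Hessian is
\[
\varphi_{z'w'}=-\ell_{vv}+O(|h|),
\]
which is uniformly nondegenerate by $\ell_{vv}\geq cI$, after shrinking $r_0$ if needed. All higher derivatives of $\varphi$ are uniformly bounded, because each $z'$-derivative falling on the first slot of $\ell$ produces a factor $h$. Likewise the amplitude $a(t,s,hz',z'-w')$ has $C^k$ norm in $(z',w')$ bounded by $\norm{a(t,s,\cdot,\cdot)}_{C^k}$. The amplitude is supported where $|z'-w'|\leq C$, but it is not compactly supported in $z'$; its $z'$-support has size about $|h|^{-1}$.

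I would handle the missing compact support by splitting the $z'$-variable into unit cubes $Q$ using a smooth partition of unity. The piece localized to $Q$ is supported in $Q\times Q^*$, where $Q^*$ is a fixed dilate of $Q$. For each piece, Hörmander's theorem (via its $TT^*$ proof) gives the norm bound $C\mu^{-\frac{n-3}{2}}\norm{a}_{C^k}$. The constants are uniform because the phase and amplitude bounds above are uniform on unit cubes, and the $TT^*$ argument is linear in a finite number of derivatives of the amplitude. The bound is nonvacuous for $\mu\geq1$. Since the output supports $Q$ and the input supports $Q^*$ each overlap only boundedly often, the pieces combine almost orthogonally:
\[
\norm{Tf}^2\lesssim\sum_Q\norm{T_Q(f\ind_{Q^*})}^2\lesssim\mu^{-(n-3)}\norm{a}_{C^k}^2\norm f^2.
\]
Multiplying by the prefactor $\mu^{\frac{n-3}{2}}$ yields \eqref{upper:transverse-estimate}. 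I expect the main obstacle to be checking that the nondegeneracy and derivative bounds hold uniformly in $h$ after rescaling, which is exactly where the smooth extension of $\ell$ to $t=s$ matters.

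For $\mathcal E_\lambda^\pm$, I would use the decomposition from Lemma~\ref{upper:kernel-construction}. The first part of $E_\lambda^\pm$ is exactly $\mathcal T^\pm_{\lambda,a_{\lambda,1}^\pm}(t,s)$, so \eqref{upper:transverse-estimate} together with $\norm{a_{\lambda,1}^\pm}_{C^k}\leq C(\lambda|t-s|)^{-1}$ gives the required bound. For $F_\lambda^\pm$, I would apply Schur's test, choosing the decay index $k'$ in the bound for $F_\lambda^\pm$ to be at least $n-2$. Integrating in $w$ gives
\[
\int\lambda^{n-3}(1+\lambda|t-s|+\lambda|z-w|)^{-k'}\dd w\leq C(1+\lambda|t-s|)^{-(k'-n+3)}\leq C(\lambda|t-s|)^{-1},
\]
and the integral in $z$ is identical by symmetry.
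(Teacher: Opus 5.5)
Your proposal is correct and follows essentially the paper's proof. You rescale by $|t-s|$ and localize to cubes; globally rescaling and then using unit cubes is equivalent to the paper's cubes of side $|t-s|$ rescaled about their centers. You then apply H\"ormander's theorem using the uniformly invertible mixed Hessian $-\ell_{vv}-(t-s)\ell_{zv}$, and treat $E_\lambda^\pm$ via the $a_{\lambda,1}^\pm$ piece plus Schur's test for $F_\lambda^\pm$ with decay exponent $n-2$. The only small imprecision is that the phase of the first part of $E_\lambda^\pm$ is $\pm\lambda\sgn(t-s)d_g$, so that part is $\mathcal T^{\pm\sgn(t-s)}_{\lambda,a_{\lambda,1}^\pm}$ rather than $\mathcal T^{\pm}_{\lambda,a_{\lambda,1}^\pm}$; this is harmless because \eqref{upper:transverse-estimate} holds for both signs.
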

	
	\begin{proof}
		For $n=3$, both estimates are immediate.  Suppose $n\geq4$ and fix
		$0<|t-s|\leq r_0$.  Partition $\R^{n-3}$ into disjoint cubes $Q$ of
		side $|t-s|$.  Insert smooth output cutoffs supported in $2Q$ and equal
		to one on $Q$.  Since $|z-w|\leq C|t-s|$ on the kernel support, only
		the values of $h$ on a fixed concentric enlargement $Q^*$ contribute,
		and $\sum_Q\ind_{Q^*}\leq C$.
		
		Rescale about the center $z_Q$:
		\[
		z=z_Q+|t-s|z',\qquad w=z_Q+|t-s|w',\qquad
		v=\sgn(t-s)(z'-w').
		\]
		The $L^2$-normalized rescaling gives the prefactor and mixed Hessian
		\[
		\begin{gathered}
			\lambda^{\frac{n-3}{2}}|t-s|^{-\frac{n-3}{2}}|t-s|^{n-3}
			= (\lambda|t-s|)^{\frac{n-3}{2}},\\
			\partial_{z'}\partial_{w'}\!\left(\frac{d_g(x,y)}{|t-s|}\right)
			=-\ell_{vv}-(t-s)\ell_{zv}.
		\end{gathered}
		\]
		By \eqref{upper:distance}, this Hessian is uniformly invertible for small
		$r_0$.  On the fixed rescaled support, the phase derivatives are uniformly
		bounded, and the amplitude has $C^k$ norm at most
		$C_k\norm{a(t,s,\cdot,\cdot)}_{C^k(\R^{2n-6})}$.
		Apply H\"ormander's $L^2$ oscillatory integral theorem
		\cite{Hormander1973, SoggeFIO} with parameter $\lambda|t-s|\geq1$
		to obtain the factor $(\lambda|t-s|)^{-(n-3)/2}$.  Hence
		\[
		\norm{\mathcal T_{\lambda,a}^\pm(t,s)h}_{L^2(Q)}
		\leq C\norm{a(t,s,\cdot,\cdot)}_{C^k(\R^{2n-6})}
		\norm h_{L^2(Q^*)}.
		\]
		Squaring and summing proves \eqref{upper:transverse-estimate}, since
		$\sum_Q\norm h_{L^2(Q^*)}^2\leq C\norm h_{L^2(\R^{n-3})}^2$.
		
		By Lemma~\ref{upper:kernel-construction} and
		\eqref{upper:transverse-estimate}, the part of $E_\lambda^\pm$ with
		amplitude $a_{\lambda,1}^\pm$ has operator norm at most
		$C(\lambda|t-s|)^{-1}$, for either phase sign.
		For $F_\lambda^\pm$, its decay bound with exponent $n-2$ gives
		\[
		\begin{aligned}
			&\sup_z\int_{\R^{n-3}}|F_\lambda^\pm(t,s;z,w)|\dd w
			+\sup_w\int_{\R^{n-3}}|F_\lambda^\pm(t,s;z,w)|\dd z\\
			&\qquad\leq C\int_{\R^{n-3}}
			\frac{\lambda^{n-3}\dd w}
			{(1+\lambda|t-s|+\lambda|w|)^{n-2}}
			=\frac{C}{1+\lambda|t-s|}
			\leq\frac{C}{\lambda|t-s|}.
		\end{aligned}
		\]
		Schur's test gives the asserted bound for $\mathcal E_\lambda^\pm(t,s)$.
	\end{proof}
	
	\begin{proof}[Proof of Proposition~\ref{upper:transverse-factorization}]
		Increase the fixed constant $C_0$ so that $C_0\ell\geq2$
		on the relevant supports.
		Use the operators from Lemma~\ref{upper:kernel-construction}, shrinking
		the coordinate patches so that $\chi_0(t-s)=1$ on the $t,s$ supports of
		$K_\lambda$ and $b^\pm$.  On each directional piece, the principal-kernel
		identity gives, with kernel arguments suppressed,
		\[
		\begin{aligned}
			\wt\psi(v)K_\lambda-\frac\lambda i k_\lambda(t-s)
			(V_\lambda^+-V_\lambda^-)&=\chi_1\!\left(\frac{\lambda(t-s)}{C_0}\right)\wt\psi(v)K_\lambda
			+\left[1-\chi_1\!\left(\frac{\lambda(t-s)}{C_0}\right)\right]
			\wt\psi(v)S_\lambda\\
			&+\left[1-\chi_1\!\left(\frac{\lambda(t-s)}{C_0}\right)\right]
			\wt\psi(v)K_{\lambda,1}
			-\frac\lambda i k_\lambda(t-s)(E_\lambda^+-E_\lambda^-).
		\end{aligned}
		\]
		Let $\mathcal N_\lambda$ be the integral operator with the sum of the
		first two kernels on the right.  The first is supported where
		$|x-y|\leq CC_0/\lambda$. The second is bounded on a fixed compact set.
		Lemma~\ref{upper:local-spectral} and Schur's test give
		\[
		\norm{\mathcal N_\lambda}_{L^2(\R^{n-2})\to L^2(\R^{n-2})}
		\leq C\bigl(\lambda^{n-1}(C_0/\lambda)^{n-2}+1\bigr)\leq C\lambda.
		\]
		For fixed $t,s$, let $\mathcal D_\lambda(t,s)$ be the transverse operator
		with the last two kernels. It vanishes unless
		$C_0/\lambda\leq|t-s|\leq r_0$.
		Each positive-order $(z,v)$ derivative of $\lambda|t-s|\ell$
		is $O(\lambda|t-s|)$. Thus Lemma~\ref{upper:local-spectral}
		and the chain rule give, for every integer $k\geq0$,
		\[
		\begin{aligned}
			\norm{
			\wt\psi(v)\ell^{-\frac{n-1}{2}}
			a_\lambda^\pm\bigl((t,z),(s,z-(t-s)v),\lambda|t-s|\ell\bigr)
			}_{C^k(\R^{2n-6})}\leq C_k(\lambda|t-s|)^{-1},
		\end{aligned}
		\]
		where $\ell=\ell(t,s,z,v)$ and the amplitude is extended by zero
		outside its fixed compact support.
		Lemma~\ref{upper:transverse-L2}, applied to this amplitude and
		to the remainders $E_\lambda^\pm$, therefore yields
		\[
		\norm{\mathcal D_\lambda(t,s)}_{L^2(\R^{n-3})\to L^2(\R^{n-3})}
		\leq C\frac{\lambda}{|t-s|}(\lambda|t-s|)^{-1}
		=\frac{C}{|t-s|^2}.
		\]
		The operator-valued Schur test and the fixed finite directional sum give
		\[
		\norm{\mathcal E_\lambda}_{L^2(\R^{n-2})\to L^2(\R^{n-2})}
		\leq C\lambda+C\int_{C_0/\lambda\leq|r|\leq r_0}\frac{\dd r}{r^2}
		\leq C\lambda.\qedhere
		\]
	\end{proof}

\subsection{Singular Radon transforms}
\label{sec:upper-polynomial}

The next lemma is a consequence of Stein--Street's estimate \cite{SS} on singular Radon transforms.  For related polynomial and finite-type results, see
\cite{CNSW,CRW,SteinHA}.

\begin{lemma}\label{upper:polynomial}
Let $d\ge0,\ k\ge1$, let $\Theta\subset\R^d$ be compact, and fix
$\psi_0,\psi_1\in C_c^\infty(\R^{k+1})$.
Suppose $\gamma_\theta(t,\tau,\xi)\in\R^k$ is jointly real analytic
near
\[
\{(\theta,t,0,\xi):\theta\in\Theta,\ (t,\xi)\in\supp\psi_0\},
\]
with $\gamma_\theta(t,0,\xi)=\xi$ throughout that neighborhood.
There are $\rho\in(0,1)$ and $C<\infty$ such that
\begin{equation}\label{upper:polynomial-operator}
\begin{aligned}
[\mathcal P_{\theta,\epsilon}F](t,\xi)
={}&\psi_0(t,\xi)\int_\R
\frac{\chi_1(2\tau/\rho)(1-\chi_1(\tau/\epsilon))}{\tau}\,
(\psi_1F)(t-\tau,\gamma_\theta(t,\tau,\xi))\dd\tau
\end{aligned}
\end{equation}
satisfies
\begin{equation}\label{upper:polynomial-bound}
\norm{\mathcal P_{\theta,\epsilon}F}_{L^2(\R^{k+1})}
\leq C\norm F_{L^2(\R^{k+1})},
\qquad \theta\in\Theta,\quad\epsilon>0.
\end{equation}
The constants depend only on the analytic family, $\Theta$, and the
cutoffs. If $d=0$, omit $\theta$. 
\end{lemma}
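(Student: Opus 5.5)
This lemma is a direct application of Stein--Street's theorem for multi-parameter singular Radon transforms \cite[Corollary~5.6]{SS}. That result bounds, uniformly in truncations, operators of the form $f\mapsto\psi_0(x)\int f(\gamma(x,\tau))\psi(x,\tau)K(\tau)\,d\tau$, where $\gamma$ is real analytic with $\gamma(x,0)=x$ and $K$ is a Calder\'on--Zygmund kernel near the origin. The main point is that no curvature or finite-type hypothesis is needed in the real-analytic setting.

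\textbf{Setting the operator in Stein--Street form.} Put $x=(t,\xi)\in\R^{k+1}$ and define the map
\[
\Gamma_\theta(x,\tau)=\bigl(t-\tau,\gamma_\theta(t,\tau,\xi)\bigr).
\]
This map is real analytic near $\tau=0$, $x\in\supp\psi_0$, and satisfies $\Gamma_\theta(x,0)=x$. With it, $\mathcal P_{\theta,\epsilon}F(x)=\psi_0(x)\int K_\epsilon(\tau)(\psi_1F)(\Gamma_\theta(x,\tau))\,d\tau$, where
\[
K_\epsilon(\tau)=\chi_1(2\tau/\rho)\bigl(1-\chi_1(\tau/\epsilon)\bigr)/\tau .
\]
Choose $\rho$ small enough that $\Gamma_\theta$ is analytic on $\{|\tau|\le\rho\}$ uniformly for $\theta$ in a neighborhood of $\Theta$; compactness of $\Theta$ and of $\supp\psi_0$ makes this possible.

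\textbf{The kernels are uniformly Calder\'on--Zygmund.} For every $\epsilon>0$ the truncated kernels satisfy $|K_\epsilon(\tau)|\le C|\tau|^{-1}$ and $|\partial_\tau K_\epsilon(\tau)|\le C|\tau|^{-2}$. Their cancellation is also uniform: $\int_{a<|\tau|<b}K_\epsilon(\tau)\,d\tau=0$ for all $a<b$, because $\chi_1$ is even and $K_\epsilon$ is therefore odd. Equivalently, one can decompose $K_\epsilon=\sum_j2^{j}\varsigma_j(2^j\tau)$ with $\varsigma_j$ uniformly bounded in $\mathcal S$ and of mean zero, which is the dyadic form Stein--Street use. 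Since the constants in \cite{SS} depend only on these kernel bounds, the resulting estimate is uniform in $\epsilon$.

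\textbf{Uniformity in $\theta$ (the main obstacle).} Corollary~5.6 is stated for a single analytic $\gamma$, so the uniformity over $\theta\in\Theta$ needs an argument. I would first try to absorb $\theta$ into the space variables. Define the operator on $\R^{d+k+1}$ by
\[
\widetilde F(\theta,x)\mapsto\widetilde\psi(\theta)\,\psi_0(x)\int K_\epsilon(\tau)\,(\psi_1\widetilde F)\bigl(\theta,\Gamma_\theta(x,\tau)\bigr)\,d\tau ,
\]
with $\widetilde\psi=1$ on a neighborhood of $\Theta$. Its curve $(\theta,x)\mapsto(\theta,\Gamma_\theta(x,\tau))$ is still jointly analytic and equals the identity at $\tau=0$, so \cite{SS} gives $L^2(\R^{d+k+1})$ bounds for it.

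Those bounds only control $\theta$-averages, not each slice $\mathcal P_{\theta,\epsilon}$ separately. To pass to a fixed $\theta$, I would cover $\Theta$ by finitely many small balls. Inspecting the Stein--Street proof, their constants depend on the analytic data only through finitely many quantities: the control of the pure powers and the Hörmander-type spanning conditions on a compact set, together with analytic norms of $\Gamma_\theta$. All of these are locally uniform in the parameter $\theta$, so the bound holds uniformly on each ball and hence on $\Theta$.

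If quoting the argument in that form is too delicate, the fallback is the lifted operator itself. Since the $\theta$-variable is merely a passive parameter in the lifted curve, it factors: testing the lifted operator against $\widetilde F(\theta,x)=g(\theta)F(x)$ and localizing $g$ to small balls around a fixed $\theta_0$ recovers the bound for $\mathcal P_{\theta_0,\epsilon}$ through continuity in $\theta$ of $\mathcal P_{\theta,\epsilon}F$. This continuity holds for each fixed $\epsilon$, and the resulting bound is uniform in $\epsilon$. Either way the constant depends only on the analytic family, $\Theta$ and the cutoffs.
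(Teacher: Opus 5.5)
Your proof is correct, but it reaches uniformity in $\epsilon$ by a different route from the paper. Your fallback for the $\theta$-uniformity is exactly the paper's argument: lift $\theta$ to a space variable, apply \cite{SS} on $\R^{d+k+1}$ after a finite localization, test against $g(\theta)F(t,\xi)$ with $|g|^2$ concentrating at $\theta_0$, and use continuity of $\theta\mapsto\mathcal P_{\theta,\epsilon}F$ at fixed $\epsilon$. You should drop your first suggestion there. It is an unverified claim about the internals of \cite{SS}, and making it rigorous would mean running their finite-generation argument jointly in $(\theta,x)$, which is the lift again.

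The genuine difference is how you get uniformity in $\epsilon$. You apply Stein--Street to each truncated kernel $K_\epsilon$ and use that their bounds depend on the kernel only through a bounded set of the Calder\'on--Zygmund class. $K_\epsilon$ does lie in such a set uniformly, since it is odd and its dyadic pieces have mean zero and are uniformly bounded in $C_c^\infty$. The paper instead applies \cite[Corollary~5.6]{SS} only to the single untruncated principal-value operator $\mathcal P_0$ with kernel $\chi_1(2\tau/\rho)/\tau$, and recovers the truncation through \eqref{upper:truncation}. Because the $t$-component of the map is exactly $t-\tau$, one has $e^{iv\tau/\epsilon}=e^{ivt/\epsilon}e^{-iv(t-\tau)/\epsilon}$. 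Fourier inversion of $\chi_1(\tau/\epsilon)$ then writes the removed part as an average of conjugates of $\mathcal P_0$ by unimodular multipliers, so $\norm{\mathcal P_\epsilon}\le\bigl(1+\norm{\widehat{\chi_1}}_{L^1}/2\pi\bigr)\norm{\mathcal P_0}$.

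The paper's route needs nothing about how the constants in \cite{SS} depend on the kernel. Yours needs that uniformity-over-bounded-kernel-sets statement; it is true of their theory, but you should cite it explicitly rather than assert it. In exchange, your route does not use the special translation structure in $t$. It would therefore work for a general analytic map $\Gamma_\theta(x,\tau)$ with $\Gamma_\theta(x,0)=x$.
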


\begin{proof}
Choose a bounded open neighborhood $\Theta'$ of $\Theta$ on whose
closure the same analyticity holds for sufficiently small $|\tau|$.
The map
\[
(\theta,t,\xi)\longmapsto
(\theta,t-\tau,\gamma_\theta(t,\tau,\xi))
\]
is jointly analytic and equals the identity at $\tau=0$.
For smooth $U$ supported in $\Theta'\times\R^{k+1}$, define
\[
[\mathcal P_0U](\theta,t,\xi)
=\psi_0(t,\xi)\pv\int_\R\frac{\chi_1(2\tau/\rho)}\tau
(\psi_1U)(\theta,t-\tau,\gamma_\theta(t,\tau,\xi))\dd\tau.
\]
A finite localization on
$\overline{\Theta'}\times\supp\psi_0$, with input cutoffs equal to one
on the corresponding images, and for sufficiently small $\rho$,  \cite[Corollary~5.6]{SS} give
\[
\norm{\mathcal P_0U}_{L^2(\Theta'\times\R^{k+1})}
\leq C\norm U_{L^2(\Theta'\times\R^{k+1})}.
\]

Let $\mathcal P_\epsilon$ be obtained by inserting
$1-\chi_1(\tau/\epsilon)$ in the integral. Fourier inversion and
$e^{ivt/\epsilon}e^{-iv(t-\tau)/\epsilon}=e^{iv\tau/\epsilon}$ give
\begin{equation}\label{upper:truncation}
\mathcal P_\epsilon U
=\mathcal P_0U-\frac1{2\pi}\int_\R\widehat{\chi_1}(v)
 e^{ivt/\epsilon}\mathcal P_0(e^{-ivt/\epsilon}U)\dd v.
\end{equation}
 Hence
\[
\norm{\mathcal P_\epsilon U}_{L^2(\Theta'\times\R^{k+1})}
\leq C\left(1+\frac{\norm{\widehat{\chi_1}}_{L^1(\R)}}{2\pi}\right)
\norm U_{L^2(\Theta'\times\R^{k+1})}.
\]

For $U(\theta,t,\xi)=h(\theta)F(t,\xi)$, this yields
\[
\int_{\Theta'}|h(\theta)|^2
\norm{\mathcal P_{\theta,\epsilon}F}_{L^2(\R^{k+1})}^2\dd\theta
\leq C\norm h_{L^2(\Theta')}^2\norm F_{L^2(\R^{k+1})}^2.
\]
For fixed $\epsilon>0$ and smooth $F$, the truncated integral is
continuous in $\theta$ as an $L^2$-valued function. Given
$\theta_0\in\Theta$, choose $h_j\in C_c^\infty(\Theta')$ with
\[
\norm{h_j}_{L^2(\Theta')}=1,
\qquad\supp h_j\subset\{|\theta-\theta_0|<j^{-1}\}.
\]
Letting $j\to\infty$ proves \eqref{upper:polynomial-bound} at
$\theta_0$. Density completes the proof. For $d=0$, the parameter
localization and the last step are omitted.
\end{proof}

\begin{proposition}\label{upper:transport}
Let $k\ge1$, and let $a(t,\tau,\xi)\in\mathbb C$ and
$\gamma(t,\tau,\xi)\in\R^k$ be fixed smooth functions. Assume that,
for $|\tau|\leq r_0$, $a$ vanishes outside a fixed compact set
$K\subset\R^{k+1}$ in $(t,\xi)$, and
\[
\gamma(t,0,\xi)=\xi,
\qquad
\norm{\partial_\xi\gamma(t,\tau,\xi)-I}_{\R^k\to\R^k}
\leq\tfrac12.
\]
Define
\begin{equation}\label{upper:Radon}
[\mathcal R_\lambda F](t,\xi)
=\int_\R k_\lambda(\tau)a(t,\tau,\xi)
F(t-\tau,\gamma(t,\tau,\xi))\dd\tau.
\end{equation}
There is $B$ independent of $m$ such that, for every $m\geq2$,
\begin{equation}\label{upper:transportestimate}
\begin{aligned}
\norm{\mathcal R_\lambda F}_{L^2(\R^{k+1})}
\leq{}&\left(C_m+\frac Bm\log\lambda\right)
\norm F_{L^2(\R^{k+1})}
+C_m\lambda^{-1}\norm{\nabla_\xi F}_{L^2(\R^{k+1})},
\qquad\lambda\geq2.
\end{aligned}
\end{equation}
If $\gamma$ is real analytic near
$\{(t,0,\xi):(t,\xi)\in K\}$, then
\begin{equation}\label{upper:Radon-analytic}
\norm{\mathcal R_\lambda F}_{L^2(\R^{k+1})}
\leq C\norm F_{L^2(\R^{k+1})},\qquad\lambda\geq2.
\end{equation}
\end{proposition}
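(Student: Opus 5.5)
We split the kernel $k_\lambda(\tau)$ at the scale $|\tau|=\lambda^{-1/m}$, as in the proof outline. Write
\[
k_\lambda(\tau)=k_\lambda(\tau)\bigl(1-\chi_1(\tau\lambda^{1/m})\bigr)+k_\lambda(\tau)\chi_1(\tau\lambda^{1/m})=:k^{\rm far}(\tau)+k^{\rm near}(\tau),
\]
and let $\mathcal R^{\rm far}$ and $\mathcal R^{\rm near}$ denote the corresponding pieces of \eqref{upper:Radon}.

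\emph{Far part.} For fixed $\tau$, the map $F\mapsto a(t,\tau,\cdot)F(t-\tau,\gamma(t,\tau,\cdot))$ is bounded on $L^2(\R^{k+1})$ uniformly in $|\tau|\le r_0$. Indeed, $\xi\mapsto\gamma(t,\tau,\xi)$ has Jacobian within $\tfrac12$ of $I$, so its Jacobian determinant is bounded below. We may also assume it is injective on the relevant support, after shrinking $r_0$ or the support. Minkowski's inequality in $\tau$, or equivalently the Schur test in $t$ with operator-valued kernel, then gives
\[
\norm{\mathcal R^{\rm far}}\le C\int_{\lambda^{-1/m}\lesssim|\tau|\le r_0}\frac{d\tau}{|\tau|}\le C+\frac{2C}{m}\log\lambda .
\]
This supplies the term $B m^{-1}\log\lambda$, with $B$ independent of $m$.

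\emph{Near part.} Here $C_0/\lambda\le|\tau|\lesssim\lambda^{-1/m}$, and a uniform bound is needed. The obstacle is that $\gamma$ is only smooth, so the singular Radon transform theory does not apply directly. The plan is to Taylor expand $a$ and $\gamma$ in $\tau$ to order $N\approx m$:
\[
\gamma(t,\tau,\xi)=\gamma_N(t,\tau,\xi)+O(\tau^{N+1}),
\]
where $\gamma_N$ is a polynomial in $\tau$ with smooth coefficients and $\gamma_N(t,0,\xi)=\xi$. The error in $a$ contributes
\[
\int_{|\tau|\lesssim\lambda^{-1/m}}\frac{|\tau|^{N+1}}{|\tau|}\,d\tau,
\]
which is bounded. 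The error from replacing $\gamma$ by $\gamma_N$ is controlled by the fundamental theorem of calculus in $\xi$:
\[
\bigl|F(t-\tau,\gamma)-F(t-\tau,\gamma_N)\bigr|\lesssim|\tau|^{N+1}\int_0^1|\nabla_\xi F|\,.
\]
Its contribution to $\mathcal R^{\rm near}$ is therefore at most $C\lambda^{-(N+1)/m}\norm{\nabla_\xi F}\le C_m\lambda^{-1}\norm{\nabla_\xi F}$ once $N+1\ge m$. This explains the gradient term in \eqref{upper:transportestimate}.

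For the polynomial model $\gamma_N$, the coefficients are smooth functions of $(t,\xi)$. We freeze them by treating them as a parameter $\theta$: $\gamma_N$ is a polynomial, hence real analytic, in $(\tau,\xi)$ jointly with the coefficient values. To use Lemma~\ref{upper:polynomial}, we need analyticity in $t$ and $\xi$ as well. The plan is to realize $\gamma_N(t,\tau,\xi)=p_\theta(\tau)$ evaluated at $\theta=c(t,\xi)$, and then lift to the variables $(\theta,t,\xi)$ in the manner of the parameter trick in the proof of Lemma~\ref{upper:polynomial}. Alternatively, as the outline suggests, we pass to phase space via the Bargmann transform, where the coefficients can be localized at scale $\lambda^{-1/2}$ and frozen. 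This freezing step is where I expect the real difficulty: smooth dependence on $(t,\xi)$ of the coefficients has to be reduced to an analytic family uniformly in $\theta\in\Theta$. The amplitude $a$ is handled by expanding it in $\tau$ as well and absorbing $\psi_0,\psi_1$-type cutoffs. Lemma~\ref{upper:polynomial} with $\epsilon=C_0/\lambda$ and $\rho\sim\lambda^{-1/m}$ then yields the near-part bound $C_m$. The cutoff $\chi_1(2\tau/\rho)$ there is matched to ours using the truncation identity \eqref{upper:truncation}, and the mismatch between the cutoffs costs $O(1)$ by the far-part argument.

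\emph{Analytic case.} If $\gamma$ is real analytic, no splitting is needed. Expand $a$ in $\tau$ as $a(t,0,\xi)+\tau\,\tilde a$. The term $\tau\,\tilde a$ cancels the singularity $1/\tau$, leaving an integrable kernel and hence a bound $O(1)$. In the leading term, $a(t,0,\xi)$ is absorbed into $\psi_0$ by a partition of unity; since Lemma~\ref{upper:polynomial} needs smooth cutoffs, we write $a(\cdot,0,\cdot)$ as $\psi_0$ times a bounded multiplier. Lemma~\ref{upper:polynomial} with $d=0$ and $\epsilon=C_0/\lambda$ then applies directly. The remaining range $\rho/2\lesssim|\tau|\le r_0$ has a bounded kernel and contributes $O(1)$. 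This gives \eqref{upper:Radon-analytic}.
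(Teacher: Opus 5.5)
Your far part and your analytic case are correct, and they coincide with the paper's argument. In the far part you use a uniform change of variables together with Minkowski in $\tau$. In the analytic case you replace $a$ by $a(t,0,\xi)$, apply Lemma~\ref{upper:polynomial} with $d=0$ and $\epsilon=C_0/\lambda$, and bound the range $\rho/2\le|\tau|\le r_0$ directly. The gap is in the smooth near part, exactly at the freezing step you flag, and none of your suggested fixes closes it.

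\emph{Why the $\tau$-only expansion stalls.} Expanding only in $\tau$ leaves a model $\xi+\sum_{l\le N}\tau^l c_l(t,\xi)$ whose coefficients are merely smooth in $(t,\xi)$. Lemma~\ref{upper:polynomial} needs joint analyticity in $(t,\tau,\xi)$.
\begin{itemize}
\item Substituting $\theta=c(t,\xi)$ into the $\theta$-uniform bound fails. That bound holds for each \emph{constant} $\theta$. Evaluating the lifted operator on the submanifold $\{\theta=c(t,\xi)\}$ is a restriction, which is not $L^2$-bounded.
\item The Bargmann transform plays no role inside this proposition. In the paper it is used beforehand, to produce the transport operator $U_\lambda$.
\item A localization at scale $\lambda^{-1/2}$ does not match a near part reaching $|\tau|\sim\lambda^{-1/m}$. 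The input regions of such pieces would have unbounded overlap.
\item The $\rho$ in Lemma~\ref{upper:polynomial} is supplied by the lemma; you cannot choose it to be $\rho\sim\lambda^{-1/m}$.
\end{itemize}

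\emph{The missing idea: localize in $(t,\xi)$ at the same scale as the near $\tau$-range, expand jointly, and rescale.}
\begin{enumerate}
\item Take $\delta=c_m\lambda^{-1/m}$ and grid cubes $Q_j=(t_j,\xi_j)+\delta Q$. Since $|\tau|\le2\delta$ and $|\gamma-\xi|\le C|\tau|$, the near operator on $Q_j$ sees $F$ only on a concentric enlargement $Q_j^*$. These enlargements have bounded overlap.
\item Write $\gamma=\xi+\tau G$ and Taylor expand $G$ \emph{jointly} in $(t,\tau,\xi)$ about $(t_j,0,\xi_j)$ to order $m-2$. This gives $|G-G_j|\le C_m\delta^{m-1}$ on the cube. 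The composition error is then at most
\[
C_m\int_{|\tau|\le2\delta}|\tau|^{-1}\,|\tau|\,\delta^{m-1}\,d\tau\,\|\nabla_\xi F\|\le C_m\delta^m\|\nabla_\xi F\|\le C_m\lambda^{-1}\|\nabla_\xi F\|.
\]
\item Rescale $Q_j$ to the unit cube. The model becomes $\xi+\tau G_\theta(s,\tau,\xi)$ with coefficients $\theta_\mu=\delta^{|\mu|}\partial^\mu G(t_j,0,\xi_j)/\mu!$. These lie in a fixed compact box, and the map is jointly polynomial in $(\theta,s,\tau,\xi)$.
\item Lemma~\ref{upper:polynomial} now applies uniformly in $\theta$ and in $\epsilon=C_0/(\lambda\delta)$. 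The leftover rescaled range $\rho_m/2\le|\tau|\le2$ costs $C_m$ by Minkowski.
\item Squaring and summing over $j$ gives the near bound $C_m\bigl(\|F\|+\delta^m\|\nabla_\xi F\|\bigr)$.
\end{enumerate}
Without this step, your near-part estimate, and hence \eqref{upper:transportestimate}, is not established.
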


\begin{proof}
By density, we take $F\in C_c^\infty(\R^{k+1})$ and fix $m\geq2$.
Write
\[
G(t,\tau,\xi)=\int_0^1\partial_\tau\gamma(t,u\tau,\xi)\dd u,
\qquad \gamma(t,\tau,\xi)=\xi+\tau G(t,\tau,\xi).
\]
Choose $c_m>0$ sufficiently small, independently of $\lambda$, and let
\[
\delta=c_m\lambda^{-1/m},\qquad 0<c_m<r_0/4.
\]
Require also $\chi_0=1$ on $[-2c_m,2c_m]$.
Let $\mathcal R_{\lambda,\delta}$ be \eqref{upper:Radon} with the
additional factor $\chi_1(\tau/\delta)$.

\emph{Polynomial models.}
Let $Q=[-1/2,1/2)^{k+1}$ and let
$Q_j=(t_j,\xi_j)+\delta Q$ be the disjoint grid cubes meeting $K$.
Define the Taylor polynomials
\[
G_j(t,\tau,\xi)
=\sum_{|\mu|\leq m-2}
\frac{(t-t_j,\tau,\xi-\xi_j)^\mu}{\mu!}
\partial^\mu G(t_j,0,\xi_j),
\qquad \mu\in\mathbb N_0^{k+2}.
\]
For $(t,\xi)\in Q_j$, $|\tau|\leq2\delta$, and $0\leq u\leq1$, let
\[
\gamma_{j,u}(t,\tau,\xi)
=\xi+\tau\bigl((1-u)G_j(t,\tau,\xi)+uG(t,\tau,\xi)\bigr).
\]
Taylor's formula and a sufficiently small $c_m$ give
\begin{equation}\label{upper:Taylor}
\begin{gathered}
|G-G_j|\leq C_m\delta^{m-1},\quad
\norm{\partial_\xi\gamma_{j,u}-I}_{\R^k\to\R^k}
\leq C_m|\tau|\leq\tfrac12,\quad
|\gamma_{j,u}-\xi|\leq C_m\delta.
\end{gathered}
\end{equation}
The maps $(t,\xi)\mapsto(t-\tau,\gamma_{j,u})$ are injective on $Q_j$ and the Jacobian  satisfies
\[
\left|\det \partial_{(t,\xi)}(t-\tau,\gamma_{j,u})\right|
=|\det\partial_\xi\gamma_{j,u}|\geq2^{-k}.
\]
Thus their images lie in fixed concentric enlargements $Q_j^*$, with
$\sum_j\ind_{Q_j^*}\leq C_m$, and change of variables gives
\begin{equation}\label{upper:composition}
\norm{F(t-\tau,\gamma_{j,u}(t,\tau,\xi))}_{L^2(Q_j)}
\leq2^{k/2}\norm F_{L^2(Q_j^*)}.
\end{equation}
The same estimate holds with $F$ replaced by $\nabla_\xi F$.

On $Q_j$, define
\[
[\mathcal P_jF](t,\xi)
=\int_\R k_\lambda(\tau)\chi_1(\tau/\delta)
F(t-\tau,\xi+\tau G_j(t,\tau,\xi))\dd\tau.
\]
Define the coefficient vectors
\[
\theta_\mu
=\frac{\delta^{|\mu|}}{\mu!}\partial^\mu G(t_j,0,\xi_j)
\in\R^k,
\qquad
\theta=(\theta_\mu)_{|\mu|\leq m-2}.
\]
The rescaled polynomial is
\[
G_\theta(s,\tau,\xi)
:=\sum_{|\mu|\leq m-2}\theta_\mu(s,\tau,\xi)^\mu
=G_j(t_j+\delta s,\delta\tau,\xi_j+\delta\xi).
\]
For fixed $m$, these coefficient vectors lie in a compact box
independent of $j$ and $\lambda$. Viewing $\theta$ as an independent
parameter, the map
\[
(\theta,s,\tau,\xi)\longmapsto\xi+\tau G_\theta(s,\tau,\xi)
\]
is polynomial.
Choose fixed $\psi_0,\psi_1$ equal to one on $Q$ and, respectively,
on all its images for these coefficients and $|\tau|\leq2$.
Enlarge $Q_j^*$ so its rescaling contains $\supp\psi_1$, and set
\[
F_j(s,\xi)=\delta^{(k+1)/2}(\ind_{Q_j^*}F)
(t_j+\delta s,\xi_j+\delta\xi),\qquad
\norm{F_j}_{L^2(\R^{k+1})}=\norm F_{L^2(Q_j^*)}.
\]
For $(s,\xi)\in Q$, the rescaled operator is
\[
\delta^{(k+1)/2}[\mathcal P_jF](t_j+\delta s,\xi_j+\delta\xi)=\int_\R
\frac{\chi_1(\tau)(1-\chi_1(\lambda\delta\tau/C_0))}{\tau}
(\psi_1F_j)(s-\tau,\xi+\tau G_\theta(s,\tau,\xi))\dd\tau.
\]
Let $\rho_m$ be supplied by Lemma~\ref{upper:polynomial}.
Then the part with $\chi_1(\tau)$ replaced by $\chi_1(2\tau/\rho_m)$
has norm  $\le C_m\norm{F_j}_{L^2(\R^{k+1})}$, uniformly for
$\epsilon=C_0/(\lambda\delta)>0$.
The remaining part has $\rho_m/2\leq|\tau|\leq2$.
So \eqref{upper:composition}
and Minkowski's inequality give
\begin{equation}\label{upper:local-polynomial-bound}
\begin{aligned}
\norm{\mathcal P_jF}_{L^2(Q_j)}
&\leq\left(C_m+2^{k/2}
\int_{\rho_m/2\leq|\tau|\leq2}\frac{\dd\tau}{|\tau|}\right)
\norm F_{L^2(Q_j^*)}
\leq C_m\norm F_{L^2(Q_j^*)}.
\end{aligned}
\end{equation}

\emph{Taylor error.}
Since $|a(t,\tau,\xi)-a(t,0,\xi)|\leq C|\tau|$ and
\[
\begin{aligned}
&F(t-\tau,\gamma(t,\tau,\xi))
-F(t-\tau,\xi+\tau G_j(t,\tau,\xi))\\
&\quad=\tau\int_0^1(G-G_j)(t,\tau,\xi)\cdot
\nabla_\xi F(t-\tau,\gamma_{j,u}(t,\tau,\xi))\dd u,
\end{aligned}
\]
\eqref{upper:Taylor}--\eqref{upper:composition} and
$|k_\lambda(\tau)|\leq|\tau|^{-1}$ imply
\begin{equation}\label{upper:Taylorerror}
\begin{aligned}
\norm{\mathcal R_{\lambda,\delta}F
-a(t,0,\xi)\mathcal P_jF}_{L^2(Q_j)}&\leq C_m\int_{|\tau|\leq2\delta}
\left(\norm F_{L^2(Q_j^*)}
+\delta^{m-1}\norm{\nabla_\xi F}_{L^2(Q_j^*)}\right)\dd\tau\\
&\leq C_m\left(\delta\norm F_{L^2(Q_j^*)}
+\delta^m\norm{\nabla_\xi F}_{L^2(Q_j^*)}\right).
\end{aligned}
\end{equation}
Squaring, summing over the disjoint $Q_j$, and using
\eqref{upper:local-polynomial-bound} and bounded overlap yield
\begin{equation}\label{upper:near}
\norm{\mathcal R_{\lambda,\delta}F}_{L^2(\R^{k+1})}
\leq C_m\left(\norm F_{L^2(\R^{k+1})}
+\delta^m\norm{\nabla_\xi F}_{L^2(\R^{k+1})}\right).
\end{equation}

\emph{Far distances.}
For the original fixed map, change of variables gives
\[
\norm{a(t,\tau,\xi)F(t-\tau,\gamma(t,\tau,\xi))}_{L^2(\R^{k+1})}
\leq C\norm F_{L^2(\R^{k+1})},
\]
where $C$ is independent of $m$. Consequently,
\begin{equation}\label{upper:far}
\begin{aligned}
\norm{\mathcal R_\lambda-\mathcal R_{\lambda,\delta}}
_{L^2(\R^{k+1})\to L^2(\R^{k+1})}
\leq2C\int_\delta^{r_0}\frac{\dd\tau}{\tau}=\frac{2C}{m}\log\lambda+2C\log\frac{r_0}{c_m}.
\end{aligned}
\end{equation}
Since $\delta^m=c_m^m\lambda^{-1}\leq\lambda^{-1}$,
\eqref{upper:near} proves \eqref{upper:transportestimate} with $B=2C$.

If $\gamma$ is real analytic, replace $a(t,\tau,\xi)$ by
$a(t,0,\xi)$ as above, at a uniformly bounded cost.
Instead of taking Taylor polynomials, apply
Lemma~\ref{upper:polynomial} directly to $\gamma$, with $d=0$,
$\epsilon=C_0/\lambda$. This uniformly bounds the part
with cutoff $\chi_1(2\tau/\rho)$, where $\rho>0$ is fixed
and sufficiently small that $\chi_0=1$ on $[-\rho,\rho]$.
The remaining part is supported where
$\rho/2\leq|\tau|\leq r_0$ and is bounded, as in
\eqref{upper:far}, by
$2C\log\frac{2r_0}{\rho}.$
This proves
\eqref{upper:Radon-analytic}.  
\end{proof}

\subsection{The Bargmann transform}
\label{sec:upper-gaussian}

Extend $\phi$ compactly, unchanged near $\supp b$, and shrink $r_0$
so that \eqref{upper:phase} holds globally in $(z,\eta)$.
The extension and $r_0$ are fixed independently of $m,\lambda$.

\begin{lemma}[Uniform transverse bound]\label{upper:uniform-V}
For $V_\lambda$ in \eqref{upper:Vdef}--\eqref{upper:phase} and an integer $N>n-3$,
\begin{equation}\label{upper:Vnorm}
\norm{V_\lambda(t,s)}_{L^2(\R^{n-3})\to L^2(\R^{n-3})}
\leq C\norm{b(t,s,\cdot,\cdot)}_{C^{2N}(\R^{2n-6})},
\end{equation}
uniformly for $\lambda\geq1$ and $|t-s|\leq r_0$.
\end{lemma}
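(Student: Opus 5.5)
We will prove \eqref{upper:Vnorm} by viewing $V_\lambda(t,s)$ as a semiclassical Fourier integral operator whose canonical relation is a graph, with parameter $h=\lambda^{-1}$. The naive route is H\"ormander's theorem applied to the kernel after integrating out $\eta$. That route fails uniformly as $t\to s$, because the kernel degenerates to $\lambda^{n-3}\widehat{b}(\lambda(z-w))$-type behaviour. So we argue directly with the oscillatory integral representation in $(w,\eta)$.

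The case $\Phi=z\cdot\eta$ is instructive. There $V_\lambda$ is exactly an $h$-pseudodifferential operator with symbol $b(t,s,z,\eta)$, where $h=\lambda^{-1}$. Calder\'on--Vaillancourt then gives the bound $C\norm{b}_{C^{N'}}$, uniformly in $\lambda\geq1$, because the symbol derivatives do not grow. For general $\Phi=z\cdot\eta+(t-s)\phi$ we use the $TT^*$ method, written in the Cotlar--Stein/Calder\'on--Vaillancourt style.

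Compute the kernel of $V_\lambda V_\lambda^*$. It equals
\[
\left(\tfrac{\lambda}{2\pi}\right)^{n-3}\int e^{i\lambda(\Phi(t,s,z,\eta)-\Phi(t,s,z',\eta))}\,b(t,s,z,\eta)\,\overline{b(t,s,z',\eta)}\,d\eta ,
\]
since the $w$-integration produces $\delta(\eta-\eta')$ up to normalization. The phase difference is $(z-z')\cdot\nabla_z\Phi(z^*,\eta)$, evaluated at an intermediate point. By \eqref{upper:phase}, $\Phi_{z\eta}$ is within $\tfrac12$ of $I$, so the map $\eta\mapsto\nabla_z\Phi$ has $\eta$-gradient of size $\geq\tfrac12$ in the direction $z-z'$. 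More precisely,
\[
\left|\nabla_\eta\bigl(\Phi(z,\eta)-\Phi(z',\eta)\bigr)\right|\geq\tfrac12|z-z'|,
\]
by the mean value theorem applied to $\Phi_\eta$ in $z$. The higher $\eta$-derivatives of this phase difference are $O(|z-z'|)$ with constants independent of $t,s$; the compact extension of $\phi$ and the global validity of \eqref{upper:phase} make this uniform.

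Integrating by parts $2N$ times in $\eta$, with the operator $L=\nabla_\eta\Psi\cdot\nabla_\eta/(i\lambda|\nabla_\eta\Psi|^2)$ where $\Psi$ is the phase difference, gives the kernel bound
\[
C\lambda^{n-3}\bigl(1+\lambda|z-z'|\bigr)^{-2N}\norm{b}_{C^{2N}}^2 .
\]
The integral over $\eta$ is over a compact set because $b$ has compact support. Since $2N>n-3$, the right side is integrable in $z'$ with integral $C\norm{b}^2_{C^{2N}}$. Schur's test then bounds $V_\lambda V_\lambda^*$ by $C\norm{b}_{C^{2N}}^2$, and taking square roots gives \eqref{upper:Vnorm}.

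The main obstacle is making the integration-by-parts constants uniform in $\lambda$, $t$, $s$ and in the global extension. The $\eta$-derivatives of $\Psi$ divided by $|z-z'|$ must be bounded; they are, since $\Psi=\int_0^1(z-z')\cdot\Phi_z(z'+\theta(z-z'),\eta)\,d\theta$ and $\Phi$ has bounded derivatives. Each application of $L$ then costs $(\lambda|z-z'|)^{-1}$ together with one derivative of $b$. We separate the region $\lambda|z-z'|\leq1$, where we use the trivial bound $\lambda^{n-3}$, from its complement. If the $\eta$-support of $b$ were not compact, we would instead have to invoke the Bargmann transform, which is presumably the paper's intended route; here compact support suffices.
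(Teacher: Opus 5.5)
Your argument is correct and is essentially the paper's proof. The paper also computes the kernel of $V_\lambda(t,s)V_\lambda(t,s)^*$ and uses \eqref{upper:phase} to get $|\Phi_\eta(z,\eta)-\Phi_\eta(z',\eta)|\geq\frac12|z-z'|$ and $|\partial_\eta^\alpha(\Phi(z,\eta)-\Phi(z',\eta))|\leq C_\alpha|z-z'|$. It then integrates by parts in $\eta$ over the bounded $\eta$-support to bound the kernel by $C\norm{b}_{C^{2N}}^2\lambda^{n-3}(1+\lambda|z-z'|)^{-N}$, and finishes with Schur's test, treating $n=3$ separately as multiplication by $e^{i\lambda\Phi}b$; the Bargmann transform is not used for this lemma.
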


\begin{proof}
For $n=3$, $|V_\lambda(t,s)|=|b(t,s)|$.
For $n\geq4$, suppress $t,s$. The kernel of
$V_\lambda(t,s)V_\lambda(t,s)^*$ is
\[
\left(\frac\lambda{2\pi}\right)^{n-3}
\int e^{i\lambda(\Phi(z,\eta)-\Phi(z',\eta))}
b(z,\eta)\overline{b(z',\eta)}\dd\eta.
\]
The  bound \eqref{upper:phase} implies
\[
|\Phi_\eta(z,\eta)-\Phi_\eta(z',\eta)|\geq\tfrac12|z-z'|,
\qquad
|\partial_\eta^\alpha(\Phi(z,\eta)-\Phi(z',\eta))|
\leq C_\alpha|z-z'|\quad(|\alpha|\geq1).
\]
Integration by parts bounds the kernel by
\[
C_N\norm b_{C^{2N}(\R^{2n-6})}^2
\lambda^{n-3}(1+\lambda|z-z'|)^{-N}.
\]
For $N>n-3$, its  integrals in $z$ and $z'$ are 
$\le C\norm b_{C^{2N}(\R^{2n-6})}^2$. Schur's test proves the claim.
\end{proof}

We use a rescaled Bargmann transform. Its isometry is classical
\cite[Proposition~2.1]{TataruPhaseSpace} and  the localized derivative bound
below follows from the same calculation.
	For $n\geq4$, $\lambda\geq1$, $h\in L^2(\R^{n-3})$ and
	$\xi=(q,p)\in\R^{2n-6}$, where $q,p\in\R^{n-3}$, define
	\begin{equation}\label{upper:Wdef}
		\W h(\xi)=c_\lambda\int_{\R^{n-3}}
		e^{-\lambda|w-q|^2/2-i\lambda p\cdot(w-q)}h(w)\dd w,
		\qquad
		c_\lambda=\left(\frac\lambda{2\pi}\right)^{\frac{n-3}{2}}
		\left(\frac\lambda\pi\right)^{\frac{n-3}{4}}.
	\end{equation}
	
	\begin{lemma}[Bargmann transform]\label{upper:gaussian-estimate}
		The transform in \eqref{upper:Wdef} satisfies
		\[
		\norm{\W h}_{L^2(\R^{2n-6})}=\norm h_{L^2(\R^{n-3})},
		\qquad \W^*\W=I.
		\]
		For every fixed $\varphi\in C_c^\infty(\R^{2n-6})$ and every
		$h\in L^2(\R^{n-3})$,
		\begin{equation}\label{upper:Wbound}
			\norm{\varphi\W h}_{L^2(\R^{2n-6})}
			+\lambda^{-1}\norm{\nabla_\xi(\varphi\W h)}_{L^2(\R^{2n-6})}
			\leq C_{\varphi}\norm{h}_{L^2(\R^{n-3})}.
		\end{equation}
		The constant $C_{\varphi}$ is independent of $\lambda\geq1$.
	\end{lemma}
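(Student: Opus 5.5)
We prove the isometry by computing $\norm{\W h}_{L^2}^2$ directly, and obtain \eqref{upper:Wbound} by differentiating under the integral. In both parts the $q$-integral is a Gaussian and the $p$-integral is a Fourier transform.

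\emph{Isometry.} Fix $q$ and write $\W h(q,p)=c_\lambda e^{i\lambda p\cdot q}\int e^{-i\lambda p\cdot w}g_q(w)\dd w$, where $g_q(w)=e^{-\lambda|w-q|^2/2}h(w)$. This is $c_\lambda$ times the Fourier transform of $g_q$ evaluated at $\lambda p$, up to a unimodular factor. Plancherel in $p$ gives
\[
\int|\W h(q,p)|^2\dd p=c_\lambda^2(2\pi)^{n-3}\lambda^{-(n-3)}\norm{g_q}_{L^2}^2
=c_\lambda^2\left(\frac{2\pi}{\lambda}\right)^{n-3}\int e^{-\lambda|w-q|^2}|h(w)|^2\dd w.
\]
Integrating in $q$ gives the factor $\int e^{-\lambda|w-q|^2}\dd q=(\pi/\lambda)^{(n-3)/2}$. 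The total constant is $c_\lambda^2(2\pi/\lambda)^{n-3}(\pi/\lambda)^{(n-3)/2}=1$ by the choice of $c_\lambda$. Hence $\norm{\W h}=\norm h$, and polarization gives $\W^*\W=I$.

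\emph{Derivative bound.} We have $\nabla_\xi(\varphi\W h)=(\nabla\varphi)\W h+\varphi\nabla_\xi\W h$, and the first term is bounded by $C\norm h$ from the isometry. It therefore suffices to show $\lambda^{-1}\norm{\varphi\nabla_\xi\W h}\leq C_\varphi\norm h$.

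For the $p$-derivative, differentiation produces the factor $-i\lambda(w-q)$ inside the integral. So $\lambda^{-1}\partial_p\W h$ is the same transform applied with the Gaussian $e^{-\lambda|w-q|^2/2}$ replaced by $-i(w-q)e^{-\lambda|w-q|^2/2}$. The Plancherel computation above goes through verbatim and gives the bound
\[
c_\lambda^2\left(\frac{2\pi}{\lambda}\right)^{n-3}\int|w-q|^2e^{-\lambda|w-q|^2}\dd q\;\norm h^2=O(\lambda^{-1})\norm h^2,
\]
which is even better than needed, and no cutoff is required.

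For the $q$-derivative, differentiation produces the factor $\lambda(w-q)+i\lambda p$. The part $\lambda(w-q)$ is treated exactly like the $p$-derivative and contributes $O(\lambda^{-1/2})\norm h$ after dividing by $\lambda$. The part $i\lambda p$ contributes $\lambda^{-1}\cdot\lambda\,ip\,\W h=ip\,\W h$. Since $|p|$ is bounded on $\supp\varphi$, we get $\norm{\varphi p\W h}\leq C_\varphi\norm h$.

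The cutoff $\varphi$ is needed only for this last term; it is the one point where a global bound would fail. Otherwise the argument is routine, and the only real work is checking the normalizing constant.
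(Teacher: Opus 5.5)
Your proposal is correct and follows the paper's own route. The isometry comes from Plancherel in $p$ followed by the Gaussian integral in $q$, and your normalizing-constant check is right. The derivative bound comes from the identity $\partial_{q_j}\W h=i\lambda p_j\W h+i\partial_{p_j}\W h$ together with the exact computation $\norm{\partial_{p_j}\W h}_{L^2}^2=\frac{\lambda}{2}\norm{h}_{L^2}^2$, with the cutoff $\varphi$ used only to bound $p$.

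The paper runs the computation for smooth $h$ and then extends by density. Your argument applies directly to $h\in L^2$, since $e^{-\lambda|w-q|^2/2}h\in L^1\cap L^2$; this is a harmless difference.
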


\begin{proof}
For smooth $h$, Plancherel in $p$ and then integration in $q$ give
\[
\begin{aligned}
\int_{\R^{n-3}}|\W h(q,p)|^2\dd p
&=\left(\frac\lambda\pi\right)^{(n-3)/2}
\int_{\R^{n-3}}e^{-\lambda|w-q|^2}|h(w)|^2\dd w,\\
\norm{\W h}_{L^2(\R^{2n-6})}^2
&=\norm h_{L^2(\R^{n-3})}^2.
\end{aligned}
\]
Thus $\W^*\W=I$ and $\norm{\W^*}_{L^2(\R^{2n-6})\to L^2(\R^{n-3})}=1$.
Differentiation and the same  calculation yield
\[
\begin{gathered}
\partial_{p_j}\W h(q,p)
=-i\lambda c_\lambda\int (w_j-q_j)
 e^{-\lambda|w-q|^2/2-i\lambda p\cdot(w-q)}h(w)\dd w,\\
\partial_{q_j}\W h=i\lambda p_j\W h+i\partial_{p_j}\W h,\\
\norm{\partial_{p_j}\W h}_{L^2(\R^{2n-6})}^2
=\lambda^2\left(\frac\lambda\pi\right)^{(n-3)/2}
\int |h(w)|^2\int (w_j-q_j)^2e^{-\lambda|w-q|^2}\dd q\dd w
=\frac\lambda2\norm h_{L^2(\R^{n-3})}^2.
\end{gathered}
\]
On $\supp\varphi$, $p$ is bounded. The product rule therefore gives
\eqref{upper:Wbound}. Density extends the identities and estimates
to $h\in L^2(\R^{n-3})$.
\end{proof}

	Let $V_\lambda(t,s)$ be given by \eqref{upper:Vdef}--\eqref{upper:phase},
	with $n\geq4$ and the fixed global extension of $\phi$ above.
	For each $\xi$, let $p_*$ be the unique solution of
	\[
	p=\Phi_z(t,s,q,p_*)=p_*+(t-s)\phi_z(t,s,q,p_*).
	\]
	The bound $\norm{\Phi_{z\eta}-I}_{\R^{n-3}\to\R^{n-3}}<1/2$
	makes this a global smooth change of variables. Define
	\begin{equation}\label{upper:map}
		\begin{aligned}
			q_*&=\Phi_\eta(t,s,q,p_*)=q+(t-s)\phi_\eta(t,s,q,p_*),\\
			\gamma(t,s,\xi)&=(q_*,p_*),\quad S(t,s,\xi)=\Phi(t,s,q,p_*)-q_*\cdot p_*,
		\end{aligned}
	\end{equation}
	Writing $b(t,s,\xi)=b(t,s,q,p)$, for $F\in L^2(\R^{2n-6})$ set
	\[
	[U_\lambda(t,s)F](\xi)
	=b(t,s,\xi)e^{i\lambda S(t,s,\xi)}F(\gamma(t,s,\xi)).
	\]
	
The construction follows Tataru's phase-space representation
using the Bargmann transform
\cite[Theorem~4 and Section~7.1]{TataruPhaseSpace}.
We prove the near-identity version needed here, with the original
amplitude $b$ and the explicit error in \eqref{upper:factor}.

\begin{lemma}[Phase-space representation]\label{upper:gaussian-factorization}
		After decreasing $r_0$, for $\lambda\geq1$ and $|t-s|\leq r_0$,
		\[
		\begin{gathered}
		\gamma(t,t,\xi)=\xi,\qquad S(t,t,\xi)=0,\\
		\norm{\partial_\xi\gamma(t,s,\xi)-I}_{\R^{2n-6}\to\R^{2n-6}}
		\leq C|t-s|\leq\tfrac12,
		\end{gathered}
		\]
		and
		\begin{equation}\label{upper:factor}
			\begin{gathered}
				V_\lambda(t,s)=\W^*U_\lambda(t,s)\W+E_\lambda(t,s),\\
				\norm{E_\lambda(t,s)}_{L^2(\R^{n-3})\to L^2(\R^{n-3})}
				\leq C\bigl(|t-s|+\lambda^{-1/2}\bigr).
			\end{gathered}
		\end{equation}
		The constant $C$ is independent of $t,s,\lambda$.
	\end{lemma}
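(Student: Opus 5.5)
The geometric statements about $\gamma$ and $S$ come straight from \eqref{upper:phase}; the factorization \eqref{upper:factor} is then proved by comparing kernels on phase space.

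\textbf{Geometry of $\gamma$ and $S$.} At $t=s$ we have $\Phi=z\cdot\eta$, so $p_*=p$ and $q_*=q$. Hence $\gamma(t,t,\xi)=\xi$ and $S(t,t,\xi)=q\cdot p-q\cdot p=0$. For $t\neq s$, the map $(q,p)\mapsto(q,p_*)$ is the inverse of $(q,p_*)\mapsto(q,p_*+(t-s)\phi_z)$. This inverse is an $O_{C^N}(|t-s|)$ perturbation of the identity, because $\phi$ is compactly supported with bounded derivatives. Composing with $(q,p_*)\mapsto(q+(t-s)\phi_\eta,p_*)$ gives $\norm{\partial_\xi\gamma-I}\leq C|t-s|$, and decreasing $r_0$ makes this $\leq\frac12$. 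The map $\gamma$ is the canonical transformation generated by $\Phi$, and $S$ is the associated action. In particular $dS=p\cdot dq-p_*\cdot dq_*$ on the graph, which will be the key identity in the stationary phase step below.

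\textbf{Kernel comparison.} Since $\W^*\W=I$, it suffices to show that $\W V_\lambda\W^*$ and $U_\lambda$ differ on $L^2(\R^{2n-6})$ by an operator of norm at most $C(|t-s|+\lambda^{-1/2})$. Composing the Gaussians with \eqref{upper:Vdef}, the kernel of $\W V_\lambda\W^*$ from $\zeta=(q',p')$ to $\xi=(q,p)$ is
\[
c_\lambda^2\Bigl(\frac\lambda{2\pi}\Bigr)^{n-3}\int e^{i\lambda\Psi}\, b(z,\eta)\dd z\dd w\dd\eta,
\]
with complex phase
\[
\Psi=\Phi(z,\eta)-w\cdot\eta+\tfrac i2|z-q|^2+p\cdot(z-q)+\tfrac i2|w-q'|^2-p'\cdot(w-q').
\]
The $w$-integral is an exact Gaussian and produces $e^{-|\eta-p'|^2\lambda/2}$ times a phase. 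The remaining $(z,\eta)$-integral has a nondegenerate complex critical point. At $\zeta=\gamma(\xi)$, the critical point is $z=q$, $\eta=p_*$, and there the phase equals $S(\xi)+p_*\cdot q'-\ldots$, which matches $e^{i\lambda S}$ after the normalization. The kernel of $U_\lambda$ itself is $b(\xi)e^{i\lambda S}\delta(\zeta-\gamma(\xi))$, so the comparison is between an approximate Gaussian-concentrated kernel and an exact transport kernel.

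Rather than prove a full Tataru-type representation, I would use the near-identity structure directly. Write $\Phi=z\cdot\eta+(t-s)\phi$ and compare both operators with the $t=s$ case. When $t=s$, $V_\lambda$ is the $\lambda$-pseudodifferential operator with symbol $b(t,t,z,\eta)$, and $U_\lambda$ is multiplication by $b(t,t,\xi)$. The standard Bargmann calculus gives $\norm{\W^*b\W-\mathrm{Op}_\lambda(b)}\leq C\lambda^{-1/2}$: the gradient of $b$ pairs against a Gaussian of width $\lambda^{-1/2}$, and Lemma~\ref{upper:uniform-V} controls the norms.

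\textbf{The $t$-variation and the main obstacle.} The remaining task is to show that $\partial_t$ of the difference $\W V_\lambda\W^*-U_\lambda$ (for fixed $s$) has norm $O(1)$, uniformly in $\lambda$. This is where the canonical-relation identity must be used, and it is the main obstacle: the $\partial_t$ derivatives of $e^{i\lambda(t-s)\phi}$ and $e^{i\lambda S}$ each carry a factor $\lambda$, and these must cancel. The cancellation comes from $\partial_tS=(\partial_t\Phi)(q,p_*)$, i.e.\ the Hamilton–Jacobi relation. After this cancellation the leftover terms involve $(z-q)$ or $(\eta-p_*)$ against Gaussians of width $\lambda^{-1/2}$, and are $O(1)$ by Schur bounds on the Gaussian-decaying kernels.

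If this derivative approach turns out to be too messy, I would instead follow Tataru's argument \cite[Section~7.1]{TataruPhaseSpace}. There one bounds the conjugated kernel by $C\lambda^{n-3}e^{-c\lambda|\zeta-\gamma(\xi)|^2}$, with a Taylor error of size $\lambda^{1/2}|\zeta-\gamma(\xi)|\cdot|t-s|$ plus $\lambda^{-1/2}$, and Schur's test then gives $C(|t-s|+\lambda^{-1/2})$. Either route yields \eqref{upper:factor}.
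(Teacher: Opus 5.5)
Your treatment of $\gamma$ and $S$ is fine and agrees with the paper. The argument for \eqref{upper:factor}, however, has a genuine gap.

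\textbf{The upstairs reduction aims at a false statement.} You reduce to $\norm{\W V_\lambda\W^*-U_\lambda}\le C(|t-s|+\lambda^{-1/2})$, but this cannot hold. $\W\W^*$ is only the orthogonal projection onto the Bargmann space, so $\W V_\lambda\W^*$ vanishes on $(\operatorname{Ran}\W)^\perp$ while $U_\lambda$ does not. Already at $t=s$, $\W\,\mathrm{Op}_\lambda(b)\W^*-b$ has norm comparable to $\sup|b|$. Moreover, $\partial_tU_\lambda$ contains $\partial_t\gamma\cdot(\nabla_\xi F)\circ\gamma$, which is unbounded on $L^2(\R^{2n-6})$.

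\textbf{The derivative claim is false even downstairs.} The bound $\norm{\partial_t(V_\lambda-\W^*U_\lambda\W)}=O(1)$ fails. The difference is an operator of the form \eqref{upper:Vdef}, with the same phase $\lambda(\Phi-w\cdot\eta)$ and an amplitude genuinely of size $|t-s|$. For example, take $\phi=|\eta|^2/2$ near $\supp b$. Then the Gaussian average produced by $\W^*U_\lambda\W$ gives amplitude $(1+i(t-s)/2)^{-(n-3)/2}b+O(\lambda^{-1/2})$, so the difference has amplitude about $\tfrac{i(n-3)}{4}(t-s)\,b$. Differentiating in $t$ hits $e^{i\lambda(t-s)|\eta|^2/2}$ and produces a term of size $\lambda|t-s|$ (for $b$ supported away from $\eta=0$). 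Integrating from $t=s$ therefore yields only $O(\lambda^{-1/2}+|t-s|+\lambda|t-s|^2)$, which is useless in \eqref{upper:Gaussian-error}. Similarly, your leftover terms ($\lambda$ times $z-q$ or $\eta-p_*$, against Gaussians of width $\lambda^{-1/2}$) are $O(\lambda^{1/2})$ under Schur bounds, not $O(1)$.

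\textbf{The fallback does not close the gap.} Schur's test cannot compare a Gaussian kernel with the delta kernel of $U_\lambda$; you would need $\W\W^*U_\lambda\W\W^*$ instead. The asserted error structure is also exactly what has to be proved. The paper notes that Tataru's representation does not directly give the version with the original amplitude $b$ and an explicit error.

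\textbf{What is missing.} You should compare amplitudes at fixed $(t,s)$ rather than integrate in $t$. The paper's steps are:
\begin{enumerate}
\item Insert the Gaussian Fourier identity.
\item Substitute $p=\Phi_z(q,p_*)$, with Jacobian $\det\Phi_{z\eta}$.
\item Rescale $q=z+\lambda^{-1/2}u_1$, $p_*=\eta+\lambda^{-1/2}u_2$.
\item Observe that $\Phi(q,p_*)+\Phi_z(q,p_*)\cdot(z-q)+\Phi_\eta(q,p_*)\cdot(\eta-p_*)-\Phi(z,\eta)$ is an exact second-order Taylor remainder. It equals $\lambda^{-1}(-u_1\cdot u_2+\omega_\lambda)$ with $\omega_\lambda=O(|t-s|\,|u|^2)$, so no first-order term survives.
\end{enumerate}
Thus $\W^*U_\lambda\W$ has exactly the phase of $V_\lambda$. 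Its amplitude is a Gaussian average differing from $b$ by $O(|t-s|+\lambda^{-1/2})$ in $C^{2n}$, with polynomial growth in $u$. Lemma~\ref{upper:uniform-V}, applied for fixed $(u_1,u_2)$ and then integrated against the Gaussian, gives \eqref{upper:factor}.
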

	
	\begin{proof}
		Fix $t,s$ and suppress them in $\Phi,\phi,b$.
		Implicit differentiation gives uniformly bounded first derivatives of $p_*$.
		Since
		\[
		\begin{aligned}
		\gamma(t,s,\xi)-\xi&=(t-s)(\phi_\eta,-\phi_z)(q,p_*),\\
		S&=(t-s)(\phi-p_*\cdot\phi_\eta)(q,p_*),
		\end{aligned}
		\]
		the assertions about $\gamma$ and $S$ follow.
		
		Since 
		\[[U_\lambda F](q,p)
		=b(q,p)e^{i\lambda S}F(q_*,p_*),\]
		we have
		\[[\W^*U_\lambda\W h](z)
			=c_\lambda\iint
			e^{-\lambda|z-q|^2/2+i\lambda p\cdot(z-q)}
			b(q,p)e^{i\lambda S}
			[\W h](q_*,p_*)\,dq\,dp,\]
		where 
		\[[\W h](q_*,p_*)=c_\lambda\int
			e^{-\lambda|w-q_*|^2/2-i\lambda p_*\cdot(w-q_*)}
			h(w)\,dw.\]
		Inserting the Gaussian Fourier identity
		\[
		e^{-\lambda|w-q_*|^2/2-i\lambda p_*\cdot(w-q_*)}
		=\left(\frac\lambda{2\pi}\right)^{\frac{n-3}{2}}
		\int e^{-\lambda|\eta-p_*|^2/2-i\lambda(w-q_*)\cdot\eta}\dd\eta
		\]
		gives
	\[[\W^*U_\lambda\W h](z)
	=c_\lambda^2
			\left(\frac{\lambda}{2\pi}\right)^{\frac{n-3}{2}}
			\iiiint
			e^{-\lambda(|z-q|^2+|\eta-p_*|^2)/2}
			e^{i\lambda[p\cdot(z-q)+S-(w-q_*)\cdot\eta]}
			b(q,p)h(w)\,dw\,d\eta\,dq\,dp.\]
		
		For each fixed \(q\), change variables from \(p\) to \(p_*\) through \(p=\Phi_z(q,p_*).\)
		We have \[dp=\det\Phi_{z\eta}(q,p_*)\,dp_*,\]
		\[
		\begin{aligned}
			p\cdot(z-q)+S-(w-q_*)\cdot\eta
			=\Phi(q,p_*)+\Phi_z(q,p_*)\cdot(z-q)+\Phi_\eta(q,p_*)\cdot(\eta-p_*)-w\cdot\eta.
		\end{aligned}
		\]
		For fixed $(z,\eta)$, introduce new variables $u_1,u_2\in\R^{n-3}$ by
		\[
		q=z+\lambda^{-1/2}u_1,\qquad p_*=\eta+\lambda^{-1/2}u_2,
		\qquad \dd q\dd p_*=\lambda^{-(n-3)}\dd u_1\dd u_2.
		\]
		Taylor's formula gives
		\[
		\begin{aligned}
			&\lambda\bigl[\Phi(q,p_*)+\Phi_z(q,p_*)\cdot(z-q)
			+\Phi_\eta(q,p_*)\cdot(\eta-p_*)-\Phi(z,\eta)\bigr]\\
			&\qquad=-u_1\cdot u_2+\omega_\lambda(z,\eta,u_1,u_2),\\
			&\omega_\lambda=-(t-s)\int_0^1\tau
			(u_1\cdot\partial_z+u_2\cdot\partial_\eta)^2
			\phi(z+\tau\lambda^{-1/2}u_1,\eta+\tau\lambda^{-1/2}u_2)\dd\tau.
		\end{aligned}
		\]
		Since
		\[
		c_\lambda^2\left(\frac\lambda{2\pi}\right)^{\frac{n-3}{2}}
		\lambda^{-(n-3)}
		=\left(\frac\lambda{2\pi}\right)^{n-3}
		\frac1{2^{\frac{n-3}{2}}\pi^{n-3}},
		\]
		$\W^*U_\lambda\W$ has the phase in \eqref{upper:Vdef} and amplitude
		\begin{equation}\label{upper:mixedamplitude}
			\frac1{2^{\frac{n-3}{2}}\pi^{n-3}}
			\iint_{\R^{n-3}\times\R^{n-3}}
			e^{-(|u_1|^2+|u_2|^2)/2-iu_1\cdot u_2+i\omega_\lambda}
			b(q,\Phi_z(q,p_*))\det\Phi_{z\eta}(q,p_*)\dd u_1\dd u_2.
		\end{equation}
		Moreover,
		\[
		\iint e^{-(|u_1|^2+|u_2|^2)/2-iu_1\cdot u_2}\dd u_1\dd u_2
		=(2\pi)^{\frac{n-3}{2}}\int e^{-|u_2|^2}\dd u_2
		=2^{\frac{n-3}{2}}\pi^{n-3}.
		\]
		Since the  amplitude inside \eqref{upper:mixedamplitude} is close to $b(z,\eta)$, we  define the error amplitude
		\[
		a_{\lambda,u_1,u_2}(z,\eta)
		=e^{i\omega_\lambda}b(q,\Phi_z(q,p_*))
		\det\Phi_{z\eta}(q,p_*)-b(z,\eta).
		\]
		All following $C^{2n}$ norms are in $(z,\eta)$, with $u_1,u_2$ fixed.
		The identities $\Phi_z=p_*+(t-s)\phi_z$ and
		$\Phi_{z\eta}=I+(t-s)\phi_{z\eta}$ give
		\[
		\begin{aligned}
			\norm{\omega_\lambda}_{C^{2n}}&\leq C|t-s|(|u_1|+|u_2|)^2,\\
			\norm{\det\Phi_{z\eta}(q,p_*)-1}_{C^{2n}}&\leq C|t-s|,\\
			\norm{b(q,\Phi_z(q,p_*))-b(z,\eta)}_{C^{2n}}
			&\leq C\bigl(|t-s|+\lambda^{-1/2}(|u_1|+|u_2|)\bigr).
		\end{aligned}
		\]
		Using $e^{i\omega_\lambda}-1
		=i\omega_\lambda\int_0^1e^{i\tau\omega_\lambda}\dd\tau$
		and the product rule yields
		\begin{equation}\label{upper:amplitudeerror}
			\norm{a_{\lambda,u_1,u_2}}_{C^{2n}}
			\leq C\bigl(|t-s|+\lambda^{-1/2}\bigr)
			(1+|u_1|+|u_2|)^{4n+2}.
		\end{equation}
		The $\eta$-support of $a_{\lambda,u_1,u_2}$ has uniformly bounded measure,
		so the $TT^*$ proof of Lemma~\ref{upper:uniform-V} applies uniformly. Applying Lemma \ref{upper:uniform-V}  before integration in $u_1,u_2$ gives
		\[
		\begin{aligned}
			&\norm{\W^*U_\lambda(t,s)\W-V_\lambda(t,s)}
			_{L^2(\R^{n-3})\to L^2(\R^{n-3})}\\
			&\quad\leq C\iint e^{-(|u_1|^2+|u_2|^2)/2}
			\norm{a_{\lambda,u_1,u_2}}_{C^{2n}}\dd u_1\dd u_2
			\leq C\bigl(|t-s|+\lambda^{-1/2}\bigr).
		\end{aligned}
		\]
		Taking $E_\lambda(t,s)=V_\lambda(t,s)-\W^*U_\lambda(t,s)\W$
		proves \eqref{upper:factor}.
	\end{proof}

\subsection{Completion of the proof}
\label{sec:upper-completion}

\begin{proposition}\label{upper:oscillatory}
Let $H_\lambda$ be given by \eqref{upper:Hdef}, with fixed smooth
data in \eqref{upper:Vdef}--\eqref{upper:phase}.
There is $B_0$, independent of $m$, such that, for every integer $m\geq2$,
\begin{equation}\label{upper:full}
\norm{H_\lambda}_{L^2(\R^{n-2})\to L^2(\R^{n-2})}
\leq C_m+\frac{B_0}{m}\log\lambda,
\qquad m\geq2,\quad\lambda\geq2.
\end{equation}
If $\phi$ is real analytic on a fixed neighborhood of the relevant
amplitude support, then
\begin{equation}\label{upper:H-analytic}
\norm{H_\lambda}_{L^2(\R^{n-2})\to L^2(\R^{n-2})}\leq C.
\end{equation}
\end{proposition}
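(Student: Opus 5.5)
We would prove \eqref{upper:full} by conjugating $H_\lambda$ into phase space with the Bargmann transform, which turns it into a singular Radon transform of the form \eqref{upper:Radon}, and then applying Proposition~\ref{upper:transport}. For $n=3$ no transform is needed: $V_\lambda(t,s)$ is multiplication by $b(t,s)e^{i\lambda\Phi(t,s)}$, and a modulation $e^{i\lambda\Phi(t,s)}$ with $\Phi(t,s)=(t-s)\phi$ can be handled directly. Alternatively, one can regard this as the case $k=1$ after adding a dummy variable. Either way, \eqref{upper:full} then reduces to Proposition~\ref{upper:transport} with $\gamma(t,\tau,\xi)=\xi$, up to the phase. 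We will treat the phase inside the Radon structure as in the general case below.

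For $n\geq4$, write $V_\lambda(t,s)=\W^*U_\lambda(t,s)\W+E_\lambda(t,s)$ using Lemma~\ref{upper:gaussian-factorization}. The error term contributes an operator with operator-valued kernel $k_\lambda(t-s)E_\lambda(t,s)$. Its norm is at most $C|k_\lambda(t-s)|(|t-s|+\lambda^{-1/2})$, and this is integrable in $s$ uniformly: the first part is bounded, and the second is $\lambda^{-1/2}\log\lambda\le C$. Schur's test therefore bounds this piece by $C$. Since $\W$ is an isometry applied fiberwise in $z$, the main term satisfies $\|\W^*\mathcal U\W\|\le\|\varphi\,\mathcal U\|$ on the range of $\W$, where $\varphi$ is a cutoff equal to one on the support of $b$, and
\[
[\mathcal U G](t,\xi)=\int k_\lambda(t-s)\,b(t,s,\xi)e^{i\lambda S(t,s,\xi)}G(s,\gamma(t,s,\xi))\dd s .
\]
Setting $\tau=t-s$, this is \eqref{upper:Radon} with $k=2n-6$, amplitude $a=b\,e^{i\lambda S}$ and map $\gamma(t,t-\tau,\xi)$; the hypotheses on $\gamma$ follow from Lemma~\ref{upper:gaussian-factorization}.

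The main obstacle is that the phase $e^{i\lambda S}$ is not a fixed smooth amplitude. Since $S=(t-s)\tilde S$ with $\tilde S$ smooth, it has size $\lambda|\tau|$. We would remove it by folding it into the input: with $\tilde F(s,\xi)=e^{-i\lambda\sigma(s,\xi)}F$, one would look for a function $\sigma$ such that $S(t,s,\xi)=\sigma(t,\xi)-\sigma(s,\gamma)$ exactly, that is, a generating-function identity. This is plausible because $\Phi$ and $S$ come from a canonical graph. Failing that, we would split $e^{i\lambda S}$ into its value at the Taylor scale and accept the loss $\lambda|\tau|$ only where $|\tau|\le\lambda^{-1}$, which the cutoff in $k_\lambda$ already excludes; this fallback is where real work is needed.

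Once the phase is absorbed, we apply \eqref{upper:transportestimate} to $G=\varphi\W f$. The gradient term $C_m\lambda^{-1}\|\nabla_\xi G\|$ is controlled by \eqref{upper:Wbound}. Integrating over $s$ gives $C_m+\frac Bm\log\lambda$, which is \eqref{upper:full}. In the analytic case, $\gamma$ and $S$ are analytic whenever $\phi$ is, because they are defined by the implicit relations \eqref{upper:map}. In that case \eqref{upper:Radon-analytic} replaces \eqref{upper:transportestimate} and no gradient term appears, which gives \eqref{upper:H-analytic}.
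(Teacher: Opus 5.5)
There is a genuine gap, and it sits exactly at the step you call the main obstacle: you never actually remove the factor $e^{i\lambda S}$, and neither of your suggestions works.

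Your first suggestion, the exact identity $S(t,s,\xi)=\sigma(t,\xi)-\sigma(s,\gamma(t,s,\xi))$, fails in general. For $n=3$ it reads $\Phi(t,s)=\sigma(t)-\sigma(s)$, i.e.\ $\partial_t\partial_s\Phi\equiv0$. For the distance phase this means equality in the triangle inequality along the curve. If such an identity were available, $H_\lambda$ would be a modulated truncated Hilbert transform, hence uniformly bounded. The $TT^*$ reduction would then give the log-free bound for every smooth curve, contradicting Theorem~\ref{cor:product-lower}(ii) with $L\equiv1$.

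Your fallback does not act anywhere. $e^{i\lambda S}$ is a harmless perturbation only for $|\tau|\lesssim\lambda^{-1}$, which $k_\lambda$ already excludes. For $|\tau|\geq C_0/\lambda$ the phase has size $\lambda|\tau|$, and freezing it there loses $O(1)$ per dyadic scale, i.e.\ the whole $\log\lambda$. Replacing $S$ by Taylor polynomials on $\delta$-cubes is affordable, since $\lambda\delta^m\lesssim1$. But then you would need a coefficient-uniform bound for singular Radon transforms \emph{with polynomial oscillatory factors}, and neither Proposition~\ref{upper:transport} nor Lemma~\ref{upper:polynomial} provides one. For the same reason your $n=3$ paragraph is not a proof: with $\gamma(t,\tau,\xi)=\xi$, Proposition~\ref{upper:transport} says nothing about $e^{i\lambda\Phi(t,s)}$, which is the whole difficulty.

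The missing idea is to turn the phase into geometry using one extra variable $u\in\R$. Take $F=\varphi\W f$ with $\varphi=1$ on all input images $\gamma(t,s,\xi)$, not merely on $\supp b$, and set
\[
F_1(t,\xi,u)=\chi_1(u/4)e^{i\lambda u}F(t,\xi),\qquad
\gamma_1=\bigl(\gamma(t,t-\tau,\xi),\,u+S(t,t-\tau,\xi)\bigr),\qquad
a_1=\chi_1(u)\,b(t,t-\tau,\xi).
\]
Since $|S|\leq1$ on the support, $\chi_1((u+S)/4)=1$ whenever $|u|\leq2$. Hence $\mathcal R_\lambda F_1=\chi_1(u)e^{i\lambda u}\,\mathcal U_\lambda F$ exactly, with a $\lambda$-independent amplitude and $\partial_{(\xi,u)}\gamma_1=I+O(|\tau|)$. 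Proposition~\ref{upper:transport} then applies with $k=2n-5$.

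The price is $\norm{\partial_uF_1}\approx\lambda\norm{F}$. This is exactly what the term $C_m\lambda^{-1}\norm{\nabla F_1}$ in \eqref{upper:transportestimate} absorbs, alongside \eqref{upper:Wbound} for $\nabla_\xi$. In the analytic case $\gamma_1$ is analytic, so \eqref{upper:Radon-analytic} applies to the lifted operator. For $n=3$ the same lift with $\gamma_1=u+\Phi(t,t-\tau)$ and $k=1$ is what your ``dummy variable'' must do: it has to carry the phase. Your treatment of the Bargmann error $E_\lambda$ is correct and matches the paper.
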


\begin{proof}
By \eqref{upper:Vnorm}, the part with
$\rho\leq|t-s|\leq r_0$, for fixed $\rho>0$, has norm at most
$2C\log(r_0/\rho)$. We may therefore decrease $r_0$ and shrink the fixed support of
$\chi_0$ whenever needed below, retaining the notation $k_\lambda$.

Suppose $n\geq4$. Use $\gamma,S,U_\lambda$ from
Lemma~\ref{upper:gaussian-factorization}, and define
\[
[\mathcal U_\lambda F](t,\xi)
=\int_\R k_\lambda(t-s)[U_\lambda(t,s)F(s,\cdot)](\xi)\dd s.
\]
Choose a fixed $\varphi\in C_c^\infty(\R^{2n-6})$ equal to one on
all input images $\gamma(t,s,\xi)$ with $(t,s,\xi)\in\supp b$.
For $f\in C_c^\infty(\R^{n-2})$, let
\[
F(t,\xi)=\varphi(\xi)\W(f(t,\cdot))(\xi),\qquad
F_1(t,\xi,u)=\chi_1(u/4)e^{i\lambda u}F(t,\xi),\quad u\in\R.
\]
Lemma~\ref{upper:gaussian-estimate} and differentiation in $u$ give
\begin{equation}\label{upper:lifted-input}
\norm{F_1}_{L^2(\R^{2n-4})}
+\lambda^{-1}\norm{\nabla_{\xi,u}F_1}_{L^2(\R^{2n-4})}
\leq C\norm f_{L^2(\R^{n-2})}.
\end{equation}
Apply Proposition~\ref{upper:transport} with $k=2n-5$, spatial
variable $(\xi,u)$, amplitude and map
\begin{equation}\label{upper:lifted-map}
\begin{aligned}
a_1(t,\tau,\xi,u)&=\chi_1(u)b(t,t-\tau,\xi),\\
\gamma_1(t,\tau,\xi,u)
&=\bigl(\gamma(t,t-\tau,\xi),u+S(t,t-\tau,\xi)\bigr).
\end{aligned}
\end{equation}
These data are independent of $\lambda,m$. By Lemma \ref{upper:gaussian-factorization},
\[
\gamma_1(t,0,\xi,u)=(\xi,u),\qquad
\partial_{(\xi,u)}\gamma_1=
\begin{pmatrix}
\partial_\xi\gamma&0\\ \partial_\xi S&1
\end{pmatrix}
=I+O(|\tau|).
\]
After a fixed shrinking, all the hypotheses of
Proposition~\ref{upper:transport} hold and $|S|\leq1$ on the
amplitude support. There $|u|\leq2$ implies $|u+S|\leq3$, so
$\chi_1((u+S)/4)=1$. The operator \eqref{upper:Radon} for
$a_1,\gamma_1$ consequently satisfies the exact identity
\begin{equation}\label{upper:lifted-identity}
[\mathcal R_\lambda F_1](t,\xi,u)
=\chi_1(u)e^{i\lambda u}[\mathcal U_\lambda F](t,\xi).
\end{equation}
Then \eqref{upper:transportestimate} and
\eqref{upper:lifted-input}  give
\begin{equation}\label{upper:transport-application}
\begin{aligned}
&\norm{\chi_1}_{L^2(\R)}
\norm{\mathcal U_\lambda F}_{L^2(\R^{2n-5})}
=\norm{\mathcal R_\lambda F_1}_{L^2(\R^{2n-4})}\\
&\le 
	\Big(C_m+\frac Bm\log\lambda\Big)
	\|F_1\|_{L^2(\mathbb R^{2n-4})}+
	C_m\lambda^{-1}
	\|\nabla_{\xi,u}F_1\|_{L^2(\mathbb R^{2n-4})}\\
&\leq\left(C_m+\frac Cm\log\lambda\right)
\norm f_{L^2(\R^{n-2})}.
\end{aligned}
\end{equation}
The choice of $\varphi$ and \eqref{upper:factor} imply
\[
H_\lambda f(t,\cdot)
=\W^*((\mathcal U_\lambda F)(t,\cdot))
+\int_\R k_\lambda(t-s)E_\lambda(t,s)f(s,\cdot)\dd s.
\]
Lemma \ref{upper:gaussian-factorization} and the operator-valued Schur test give
\begin{equation}\label{upper:Gaussian-error}
\begin{aligned}
&\norm{H_\lambda f-\W^*(\mathcal U_\lambda F)}_{L^2(\R^{n-2})}\\
&\quad\leq C\int_{C_0/\lambda\leq|\tau|\leq r_0}
\left(1+\frac{\lambda^{-1/2}}{|\tau|}\right)\dd\tau\,
\norm f_{L^2(\R^{n-2})}\\
&\quad\leq C\bigl(1+\lambda^{-1/2}\log(2+\lambda)\bigr)
\norm f_{L^2(\R^{n-2})}\leq C\norm f_{L^2(\R^{n-2})}.
\end{aligned}
\end{equation}
Since $\norm{\W^*}_{L^2(\R^{2n-6})\to L^2(\R^{n-3})}=1$,
this proves \eqref{upper:full}.

If $\phi$ is analytic near the amplitude support, the analytic
implicit function theorem applied to
$p=p_*+(t-s)\phi_z(t,s,q,p_*)$ makes $p_*$, and hence $\gamma,S$,
analytic there. The fixed neighborhood contains $(q,p_*)$ for
sufficiently small $|t-s|$, since $|p_*-p|\leq C|t-s|$.
Thus $\gamma_1$ is analytic near its output support.
We use \eqref{upper:Radon-analytic} instead of
\eqref{upper:transportestimate} in \eqref{upper:transport-application},
then \eqref{upper:Gaussian-error} proves \eqref{upper:H-analytic}.
The amplitudes and cutoffs need only be smooth.

For $n=3$, we use the same argument without the Bargmann transform:
\[
\begin{gathered}
F_1(t,u)=\chi_1(u/4)e^{i\lambda u}f(t),\\
a_1(t,\tau,u)=\chi_1(u)b(t,t-\tau),\qquad
\gamma_1(t,\tau,u)=u+\Phi(t,t-\tau).
\end{gathered}
\]
Here $\mathcal R_\lambda F_1=\chi_1(u)e^{i\lambda u}H_\lambda f(t)$,
and Proposition~\ref{upper:transport} applies with $k=1$.
Density completes both estimates.
\end{proof}

\begin{proof}[Proof of Theorem~\ref{thm:general}]
The $TT^*$ identity, Proposition~\ref{upper:transverse-factorization},
and Proposition~\ref{upper:oscillatory} give
\begin{equation}\label{upper:localTT}
\norm{\beta T_\lambda}_{L^2(M)\to L^2(\Sigma)}^2
\leq\lambda\left(C_{\beta,m}+\frac{B_\beta}{m}\log\lambda\right).
\end{equation}
The coefficient $B_\beta$ is independent of $m$, since the
directional decomposition is fixed. Summing over the fixed
partition $\sum_\beta\beta^2=1$ yields
\[
\begin{aligned}
\norm{e_\lambda}_{L^2(\Sigma)}^2
=\sum_\beta\norm{\beta T_\lambda e_\lambda}_{L^2(\Sigma)}^2\leq\left(A_m\lambda+\frac Bm\lambda\log\lambda\right)
\norm{e_\lambda}_{L^2(M)}^2,\quad  A_m=\sum_\beta C_{\beta,m},\ B=\sum_\beta B_\beta.
\end{aligned}
\]
	Dividing by $\lambda\log\lambda$ and taking $\lambda\to\infty$
and then $m\to\infty$ proves \eqref{eq:general-littleoh}, uniformly for
$\norm{e_\lambda}_{L^2(M)}=1$.

If $(M,g)$ and $\Sigma$ are real analytic, then $d_g^2$ is analytic
near the diagonal and vanishes there to second order. Hence
\[
\ell(t,s,z,v)^2
=\frac{d_g^2((t,z),(s,z-(t-s)v))}{(t-s)^2},\qquad
\ell(t,t,z,v)^2=|(1,v)|_{g_\Sigma(t,z)}^2>0
\]
extends analytically across $t=s$. Its positive square root $\ell$
is analytic. The analytic implicit function theorem applied to
$\eta=\pm\ell_v$ makes the phases of
Lemma~\ref{upper:kernel-construction} analytic on fixed
neighborhoods of their amplitude supports. Choose the smooth
extensions to agree with these phases on those neighborhoods.
Equation~\eqref{upper:H-analytic} and the same $TT^*$ decomposition give \eqref{logfree}.
\end{proof}

	\section{Proof of Theorem 2}
\label{sec:finite-proof}

The examples in this section are related to earlier counterexamples for endpoint Strichartz estimates in  \cite{MontgomerySmith, Tao}.
A close analogue is Montgomery-Smith's counterexample \cite[Theorem~1]{MontgomerySmith} to the
two-dimensional linear endpoint Strichartz estimate, including the deterministic
argument of Carbery--Hofmann recorded there. The
phase in that argument also appears in \eqref{eq:I0}, and our lacunary
trigonometric profiles make each dyadic frequency contribute a fixed
amount, yielding logarithmic growth. 
Here we realize the two-point phase through spherical distance, use the
exact spherical projector to obtain genuine eigenfunctions, and glue the
local examples into one fixed closed embedded curve of the required
regularity.

Both parts of Theorem~\ref{cor:product-lower} reduce to curves on
$S^3$. Given a $C^1$ embedded closed curve $\Gamma\subset\Sph^2$
and a real eigenfunction $u_{\lambda_\nu}$ on $S^3$ with
$\norm{u_{\lambda_\nu}}_{L^2(S^3)}=1$, let
\[
\Sigma=\Gamma\times Y,\qquad
e_{\lambda_\nu}(x,y)=\operatorname{Vol}(Y)^{-1/2}u_{\lambda_\nu}(x).
\]
The product metric $g$ on $M=S^3\times Y$ gives
\begin{equation}\label{eq:product-identities}
	-\Delta_ge_{\lambda_\nu}=\lambda_\nu^2e_{\lambda_\nu},\qquad
	\norm{e_{\lambda_\nu}}_{L^2(M)}=1,\qquad
	\norm{e_{\lambda_\nu}}_{L^2(\Sigma)}
	=\norm{u_{\lambda_\nu}}_{L^2(\Gamma)}.
\end{equation}
The submanifold $\Sigma$ has codimension two and the same regularity
as $\Gamma$. All restriction bounds are preserved with the same constants.

\subsection{Quadratic forms}
\label{sec:sphere-kernels}

Throughout this section,
$d(x,y)=\arccos(x\cdot y)\in[0,\pi]$ is the round distance on $S^3$.
For $\nu\in\N$, let $\HH_{\nu-1}$ be the space of restrictions to $S^3$
of homogeneous harmonic polynomials of degree $\nu-1$ on $\R^4$
(polynomials $h$ satisfying $h(tx)=t^{\nu-1}h(x)$ and
$\Delta_{\R^4}h=0$). Thus
\begin{equation}\label{eq:frequency}
	-\Delta_{S^3}u_{\lambda_\nu}=(\nu^2-1)u_{\lambda_\nu},\qquad
	\lambda_\nu=\sqrt{\nu^2-1},\qquad u_{\lambda_\nu}\in\HH_{\nu-1}.
\end{equation}
For a compact $C^1$ arc or embedded closed curve $A$, let
$$R_{\nu,A}:\HH_{\nu-1}\to L^2(A),\quad R_{\nu,A}f=f|_A$$ be the restriction operator,
with domain norm $L^2(S^3)$ and induced arclength $\dd\sigma_A$ on $A$.
For a regular parametrization $\gamma$, $\dd\sigma_A=|\gamma'|\dd s$.
Arclength parametrization means $|\gamma'|=1$.  A closed curve below is
parametrized by an injective $C^K$ map
$\gamma:\R/(2\pi\mathbb Z)\to S^3$ with $\gamma'\ne0$.

\begin{lemma}
	\label{lem:adjoint}
	The orthogonal projector onto $\HH_{\nu-1}$ has kernel
	\begin{equation}\label{eq:sphere-kernel}
		\Pi_{\nu-1}(x,y)=\frac{\nu}{2\pi^2}
		\frac{\sin(\nu d(x,y))}{\sin d(x,y)},
	\end{equation}
	with the quotient extended continuously at $d=0,\pi$.
	There is a real $u_{\lambda_\nu}\in\HH_{\nu-1}$ with
	$\norm{u_{\lambda_\nu}}_{L^2(S^3)}=1$ such that for any $f\in L^2(A)$ with $\|f\|_{L^2(A)}=1$, we have $$\norm{u_{\lambda_\nu}}_{L^2(A)}^2\geq \ip{R_{\nu,A}R_{\nu,A}^*f}{f}.$$
\end{lemma}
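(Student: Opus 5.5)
The plan has two parts: identify the reproducing kernel of $\HH_{\nu-1}$ explicitly, and then obtain $u_{\lambda_\nu}$ as a maximizer of the restriction norm on the finite-dimensional space $\HH_{\nu-1}$.

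\emph{Kernel formula.} The orthogonal projector onto $\HH_{\nu-1}$ has kernel $\Pi_{\nu-1}(x,y)=\sum_i Y_i(x)\overline{Y_i(y)}$ for any orthonormal basis $\{Y_i\}$ of $\HH_{\nu-1}$. I will use three standard facts.
\begin{enumerate}
\item[(a)] Since $\HH_{\nu-1}$ is $O(4)$-invariant, $\Pi_{\nu-1}(Rx,Ry)=\Pi_{\nu-1}(x,y)$ for every $R\in O(4)$. Hence $\Pi_{\nu-1}(x,y)=P(x\cdot y)$ for a function $P$ on $[-1,1]$.
\item[(b)] For fixed $y$, the function $x\mapsto P(x\cdot y)$ lies in $\HH_{\nu-1}$. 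Writing the Laplacian on $S^3$ in geodesic polar coordinates about $y$, with $\theta=d(x,y)$, gives the radial equation
\[
P''(\cos\theta)\text{-form:}\quad -\frac{1}{\sin^2\theta}\,\partial_\theta\bigl(\sin^2\theta\,\partial_\theta F\bigr)=(\nu^2-1)F,\qquad F(\theta)=P(\cos\theta).
\]
Substituting $F=G/\sin\theta$ turns this into $G''=-\nu^2 G$. The solution regular at $\theta=0$ is $G=c\sin(\nu\theta)$, so $F=c\,\sin(\nu\theta)/\sin\theta$. Equivalently, $P$ is a multiple of the Gegenbauer polynomial $C^{(1)}_{\nu-1}$, i.e.\ the Chebyshev polynomial $U_{\nu-1}(\cos\theta)=\sin(\nu\theta)/\sin\theta$.
\item[(c)] The constant comes from the trace. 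We have $\dim\HH_{\nu-1}=\nu^2$ and $\int_{S^3}\Pi_{\nu-1}(x,x)\,dx=\nu^2$ with $\operatorname{Vol}(S^3)=2\pi^2$. Since $\Pi_{\nu-1}(x,x)$ is constant by (a), this forces $\Pi_{\nu-1}(x,x)=\nu^2/(2\pi^2)$. Comparing with the limit $\sin(\nu\theta)/\sin\theta\to\nu$ as $\theta\to0$ gives $c=\nu/(2\pi^2)$, which is \eqref{eq:sphere-kernel}.
\end{enumerate}
The continuous extension at $d=0,\pi$ is automatic, because $U_{\nu-1}$ is a polynomial in $\cos\theta$. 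In particular $\Pi_{\nu-1}$ is real and symmetric.

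\emph{Existence of $u_{\lambda_\nu}$.} The space $\HH_{\nu-1}$ is finite dimensional and $R_{\nu,A}$ is continuous, since $A$ is compact and the elements of $\HH_{\nu-1}$ are smooth. Hence the continuous function $u\mapsto\norm{u}_{L^2(A)}^2$ attains its maximum on the unit sphere of $\HH_{\nu-1}$, and this maximum equals $\norm{R_{\nu,A}}^2=\norm{R_{\nu,A}R_{\nu,A}^*}$.

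To get a real maximizer, I will argue as follows. The operator $R_{\nu,A}^*R_{\nu,A}$ on $\HH_{\nu-1}$ has real kernel: it is given by $u\mapsto\int_A\Pi_{\nu-1}(\cdot,y)u(y)\,d\sigma_A(y)$ with $\Pi_{\nu-1}$ real. It therefore commutes with complex conjugation, which preserves $\HH_{\nu-1}$. So if $u$ lies in the top eigenspace, then $\operatorname{Re}u$ and $\operatorname{Im}u$ also lie in that eigenspace, and at least one of them is nonzero. Normalizing that one gives a real top eigenvector $u_{\lambda_\nu}$.

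Then, for every $f\in L^2(A)$ with $\norm f_{L^2(A)}=1$,
\[
\ip{R_{\nu,A}R_{\nu,A}^*f}{f}=\norm{R_{\nu,A}^*f}^2\le\norm{R_{\nu,A}}^2=\norm{u_{\lambda_\nu}}_{L^2(A)}^2 .
\]
Here $R_{\nu,A}R_{\nu,A}^*$ is the integral operator on $A$ with kernel $\Pi_{\nu-1}(x,y)$, which is how the lemma will be used later.

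The only step needing real care is the explicit normalization of the zonal kernel; once the radial ODE reduction and the trace identity are in hand, everything else is soft.
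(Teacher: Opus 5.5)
Your proposal is correct and follows the paper's proof essentially step for step. The steps are: rotational invariance, the trace identity $2\pi^2F(0)=\nu^2$, the radial equation reduced via $G=F\sin d$ to $G''+\nu^2G=0$ with the solution regular at $d=0$, and a real normalized top eigenvector of the real operator $R_{\nu,A}^*R_{\nu,A}$. Your real-part/imaginary-part argument simply spells out why such a real eigenvector exists, which the paper asserts without comment.
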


\begin{proof}
	Rotational invariance gives $\Pi_{\nu-1}(x,y)=F(d(x,y))$.
	The diagonal value is constant, so taking the trace gives
	\[
	2\pi^2F(0)
	=\int_{S^3}\Pi_{\nu-1}(x,x)\dd V(x)
	=\dim\HH_{\nu-1}
	=\binom{\nu+2}{3}-\binom{\nu}{3}
	=\nu^2.
	\]
	Thus $F(0)=\nu^2/(2\pi^2)$. For $0<d<\pi$, the radial Laplacian is
	\[
	\Delta_{S^3}F(d)
	=\frac1{\sin^2d}\bigl(\sin^2d\,F'(d)\bigr)'
	=F''(d)+2\cot d\,F'(d).
	\]
	Since $-\Delta_{S^3}F=(\nu^2-1)F$, we obtain
	\[
	\begin{aligned}
		(F(d)\sin d)''+\nu^2F(d)\sin d
		&=F''(d)\sin d+2F'(d)\cos d+(\nu^2-1)F(d)\sin d\\
		&=\sin d\bigl(F''(d)+2\cot d\,F'(d)+(\nu^2-1)F(d)\bigr)
		=0.
	\end{aligned}
	\]
	Since $F$ is smooth at zero,
	$F(d)\sin d=\frac{\nu}{2\pi^2}\sin(\nu d)$, proving
	\eqref{eq:sphere-kernel}.
	
	The kernel of $R_{\nu,A}R_{\nu,A}^*$ is $\Pi_{\nu-1}$ restricted to
	$A\times A$.  Choose a real $L^2$-normalized eigenfunction $u_{\lambda_\nu}\in\HH_{\nu-1}$ of the
	real nonnegative operator $R_{\nu,A}^*R_{\nu,A}$ for its largest
	eigenvalue.  For $\norm{f}_{L^2(A)}=1$, we have
	\[
	\begin{aligned}
		\norm{u_{\lambda_\nu}}_{L^2(A)}^2
		&=\ip{R_{\nu,A}u_{\lambda_\nu}}{R_{\nu,A}u_{\lambda_\nu}}=\ip{R_{\nu,A}^*R_{\nu,A}u_{\lambda_\nu}}{u_{\lambda_\nu}}\\
		&=\norm{R_{\nu,A}^*R_{\nu,A}}_{\HH_{\nu-1}\to\HH_{\nu-1}}=\norm{R_{\nu,A}R_{\nu,A}^*}_{L^2(A)\to L^2(A)}\\
		&\geq\ip{R_{\nu,A}R_{\nu,A}^*f}{f}.
	\end{aligned}
	\]
\end{proof}

\subsection{Logarithmic growth}
\label{sec:profiles}
The construction below is motivated by  the deterministic argument of Carbery–Hofmann
recorded in \cite[Section 2]{MontgomerySmith}. 
All functions in $C_c^\infty((0,1))$ below are extended by zero to $\R$.
Fix nonnegative $\phi,\chi\in C_c^\infty((0,1))$ such that
\[
\norm{\phi}_{L^2(\R)}=1,\qquad
\min_{[1/4,3/4]}\phi>0,\qquad
\chi=1\quad\text{near }\supp\phi.
\]
For real $q\in C_c^\infty((0,1))$, set
\begin{equation}\label{eq:I0}
	I_0(q)=\int_{0<t<s<1}\frac{\phi(s)\phi(t)}{s-t}
	\sin\left(\frac{(q(s)-q(t))^2}{2(s-t)}\right)\dd t\dd s.
\end{equation}
The integrand is bounded in absolute value by
$\frac12\phi(s)\phi(t)\norm{q'}_{L^\infty(\R)}^2$.

\begin{lemma}\label{lem:profiles}
	There are $c>0$ and real $q_N\in C_c^\infty((0,1))$, $N\geq2$, supported
	in $\supp\chi$, such that
	\begin{equation}\label{eq:profile-bounds}
		I_0(q_N)\geq c\log N,\qquad
		\norm{q_N}_{L^\infty(\R)}\leq C,\qquad
		\norm{q_N^{(m)}}_{L^\infty(\R)}\leq C_mN^{m-1/2}\quad(m\geq1).
	\end{equation}
\end{lemma}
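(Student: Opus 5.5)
The plan is to mimic the Carbery--Hofmann construction: build $q_N$ as a lacunary trigonometric sum that is cut off by $\chi$, so that each dyadic frequency scale contributes a fixed positive amount to $I_0$. Concretely, set
\[
q_N(t)=\chi(t)\sum_{j=1}^{J}2^{-j/2}a\cos(2^{j}t),\qquad 2^{J}\approx N,
\]
with a small fixed amplitude $a>0$ (or a sine series; the choice is adjusted below). The bounds $\norm{q_N}_{L^\infty}\leq C$ and $\norm{q_N^{(m)}}_{L^\infty}\leq C_m\sum_{j\leq J}2^{j(m-1/2)}\leq C_mN^{m-1/2}$ for $m\geq1$ are immediate from the geometric sums, and $\supp q_N\subset\supp\chi$ by construction. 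Only the lower bound $I_0(q_N)\geq c\log N$ needs real work.

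For the lower bound, I substitute $s=t+h$ and restrict to $\supp\phi$, where $\chi=1$. Then $\bigl(q(t+h)-q(t)\bigr)^2/(2h)=:\Theta(t,h)$. In the regime $2^{-j-1}\leq h\leq 2^{-j}$, the Taylor size of $q(t+h)-q(t)$ is about $\sum_{i}2^{-i/2}\min(1,2^{i}h)$, which is of order $h^{1/2}$ when $h\sim 2^{-j}$. Hence $\Theta\sim 1$ on each dyadic shell, uniformly in $j$. That is exactly the mechanism for logarithmic growth, since $\int_{2^{-j-1}}^{2^{-j}}dh/h=\log 2$ for each of the $\approx\log_2 N$ shells. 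The task is then to show that $\int\phi(t)\phi(t+h)\sin\Theta(t,h)\,dt\geq c_0>0$ for $h$ in each shell with $2^{-J}\lesssim h\lesssim 1$, uniformly in $j$. The ranges $h<2^{-J}$ and $h\gtrsim1$ are handled separately: for $h\lesssim 2^{-J}$ the integrand satisfies $|\sin\Theta|/h\leq \Theta/h\lesssim \norm{q'}_\infty^2\lesssim N$, so that range contributes $O(1)$; for $h\gtrsim 1$ the contribution is trivially $O(1)$.

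The main obstacle is positivity of $\sin\Theta$ on average, which requires $\Theta$ to stay in $(0,\pi)$ in an averaged sense. My first approach is to take $a$ small. Then $0\leq\Theta\leq C a^2$ uniformly, since $|q(t+h)-q(t)|^2/h\lesssim a^2\bigl(\sum_i 2^{-i/2}\min(1,2^ih)h^{-1/2}\bigr)^2\lesssim a^2$. So $\sin\Theta\geq \tfrac{2}{\pi}\Theta\geq0$, and it suffices to bound $\int_{1/4}^{3/4}\bigl(q(t+h)-q(t)\bigr)^2dt\gtrsim a^2h$ for $h\sim 2^{-j}$. Expanding the square, the $i=j$ term gives $a^2 2^{-j}\sin^2(2^{j}h/2)\int\sin^2(\cdot)\,dt\gtrsim a^2h$. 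The cross terms between distinct lacunary frequencies $2^i\neq 2^{i'}$ integrate to lower order over an interval of length $1/2$, because $\int_{1/4}^{3/4}\cos(2^it)\cos(2^{i'}t)\,dt=O(2^{-\max(i,i')})$. These cross terms are summable and small relative to the main term, except when both $i,i'$ are small. That difficulty is removed by starting the sum at $j\geq j_0$ large, or by using frequencies $2^{j}\cdot 10$ and discarding finitely many shells into the constant. The off-shell diagonal terms only add positive mass. This gives $\int\phi\phi\sin\Theta\,dt\geq c a^2 h^{-1}\cdot h$ per shell after dividing by $h$, that is, a contribution $\geq c a^2\log 2$ per shell, and summing over roughly $\log_2 N-C$ shells yields $I_0(q_N)\geq c\log N$.
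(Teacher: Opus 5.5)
Your proposal is correct and follows essentially the paper's route: a small-amplitude lacunary profile $\varepsilon\chi\sum_k 2^{-k/2}\cos(\cdot\,2^k s)$, the bound $|q_N(s)-q_N(t)|\lesssim\varepsilon\sqrt{|s-t|}$ (which makes the integrand nonnegative and lets you use $\sin x\geq x/2$), and then orthogonality showing that each dyadic frequency contributes a fixed amount. The paper differs only in taking frequencies $8\pi 2^k$, which makes the differences exactly orthogonal on $[1/4,1/2]$ and gives the identity $\int_{1/4}^{1/2}|q_N(t+h)-q_N(t)|^2\,dt=\frac{\varepsilon^2}{2}\sum_k 2^{-k}\sin^2(4\pi 2^k h)$; this removes your cross-term estimates and the restriction to $k\geq j_0$. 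If you keep the restriction $k\geq j_0$, implement it by shifting the index range rather than discarding shells, so that $I_0(q_N)\geq c\log N$ still holds for every $N\geq 2$.
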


\begin{proof}
	Fix a sufficiently small $\varepsilon>0$ and set
	\begin{equation}\label{eq:deterministic-profile}
		\ell=\lfloor\log_2N\rfloor,\qquad
		q_N(s)=\varepsilon\chi(s)
		\sum_{k=1}^{\ell}2^{-k/2}\cos(8\pi2^ks).
	\end{equation}
	The product rule and geometric sums give
	\[
	\norm{q_N}_{L^\infty(\R)}\leq C\varepsilon,\qquad
	\norm{q_N^{(m)}}_{L^\infty(\R)}
	\leq C_m\varepsilon\sum_{k=1}^{\ell}2^{k(m-1/2)}
	\leq C_mN^{m-1/2},\quad m\geq1.
	\]
	For $s,t\in\supp\phi$ and $h=|s-t|>0$, splitting at $2^kh=1$ gives
	\[
	|q_N(s)-q_N(t)|
	\leq C\varepsilon\sum_{k=1}^{\ell}2^{-k/2}\min\{2^kh,1\}
	\leq C\varepsilon\sqrt h.
	\]
	Thus the sine argument in \eqref{eq:I0} lies in $[0,1]$ for small
	$\varepsilon$, and $\sin x\geq x/2$ on this interval.
	For $0<h<1/4$, $\chi(t)=\chi(t+h)=1$ on $[1/4,1/2]$.
	Orthogonality and $u=4\pi2^kh$ give
	\[
	\begin{aligned}
		\int_{1/4}^{1/2}|q_N(t+h)-q_N(t)|^2\dd t
		&=\frac{\varepsilon^2}{2}
		\sum_{k=1}^{\ell}2^{-k}\sin^2(4\pi2^kh),\\
		2^{-k}\int_0^{1/4}\frac{\sin^2(4\pi2^kh)}{h^2}\dd h
		&=4\pi\int_0^{\pi2^k}\frac{\sin^2u}{u^2}\dd u\geq c.
	\end{aligned}
	\]
	Using $\min_{[1/4,3/4]}\phi>0$, we conclude that
	\[
	I_0(q_N)\geq c\int_0^{1/4}\frac1{h^2}
	\int_{1/4}^{1/2}|q_N(t+h)-q_N(t)|^2\dd t\dd h
	\geq c\varepsilon^2\ell\geq c\log N.\qedhere
	\]
\end{proof}

\subsection{A local lower bound on the sphere}
\label{sec:transfer}

For real $q\in C_c^\infty((0,1))$, $0<r<1/2$, and $\nu\in\N$ with
$(r/\nu)\norm{q}_{L^\infty(\R)}^2<1$, define
\begin{equation}\label{eq:graph}
	\gamma_{\nu,r,q}(s)=
	\left(\sqrt{1-\frac r{\nu} q(s)^2}\cos(rs),
	\sqrt{1-\frac r{\nu} q(s)^2}\sin(rs),
	\sqrt{\frac r{\nu}}\,q(s),0\right),\qquad 0\leq s\leq1.
\end{equation}
Let $A=\gamma_{\nu,r,q}([0,1])$ and $v(s)=|\gamma_{\nu,r,q}'(s)|$.
Set
\[
f_{\nu,r,q}(\gamma_{\nu,r,q}(s))
=v(s)^{-1/2}\phi(s)e^{i\nu rs},\qquad
\norm{f_{\nu,r,q}}_{L^2(A)}=1,
\]
and write
\begin{equation}\label{eq:normalized-Q}
	\mathcal Q_{\nu,r}(q)
	=\nu^{-1}\ip{R_{\nu,A}R_{\nu,A}^*f_{\nu,r,q}}{f_{\nu,r,q}}.
\end{equation}

\begin{lemma}\label{lem:local-lower}
	There are $a>0$ and $N_0\geq2$ such that
	\begin{equation}\label{eq:local-lower}
		\mathcal Q_{\nu,r}(q_N)\geq a\log N
		\qquad(N\geq N_0,\ 0<r<1/2,\ \nu\in\N,\ \nu r\geq N^2).
	\end{equation}
\end{lemma}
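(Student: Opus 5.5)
We expand the quadratic form explicitly using the kernel \eqref{eq:sphere-kernel}. Write $\gamma=\gamma_{\nu,r,q_N}$; the arclength factors $v(s)^{-1/2}$ cancel against $\dd\sigma_A=v\dd s$. This gives
\[
\mathcal Q_{\nu,r}(q_N)=\frac{1}{2\pi^2}\iint_{[0,1]^2}\phi(s)\phi(t)e^{i\nu r(s-t)}\frac{\sin(\nu d(\gamma(s),\gamma(t)))}{\sin d(\gamma(s),\gamma(t))}\sqrt{v(s)v(t)}\dd s\dd t .
\]
The plan is to compute $D(s,t):=d(\gamma(s),\gamma(t))$ to sufficient accuracy. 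The curve is a graph over the great circle $\{(\cos rs,\sin rs,0,0)\}$ with height $\sqrt{r/\nu}\,q$. Using $\cos D=\gamma(s)\cdot\gamma(t)$ and expanding, we expect
\[
D=r|s-t|+\frac{(q(s)-q(t))^2}{2\nu r|s-t|}\,(1+\text{error})+O\!\left(\frac{r^2|s-t|\,\|q\|_\infty^2}{\nu}\right).
\]
Writing $h=|s-t|$ and $\Delta=q(s)-q(t)$, the key quantity is $\nu D=\nu rh+\frac{\Delta^2}{2rh}+\epsilon$. The denominator is $\sin D\approx rh$, and $\sqrt{v(s)v(t)}=r(1+O(\|q'\|^2/(\nu r)))$. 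Since $\|q_N'\|_\infty\lesssim N^{1/2}$ and $\nu r\geq N^2$, these corrections are $O(N^{-1})$. We then write
\[
\sin(\nu D)=\sin\Bigl(\nu rh+\frac{\Delta^2}{2rh}\Bigr)+\text{error},
\]
with the symmetric pair $(s,t),(t,s)$ combined.

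Next we combine the two phases. By symmetry in $s\leftrightarrow t$, the integral equals the real part, namely $\frac{1}{\pi^2}\iint_{t<s}\phi\phi\cos(\nu rh)\frac{\sin(\nu rh+\Delta^2/(2rh))}{h}\dd t\dd s$ up to errors. The product-to-sum formula gives
\[
\cos(\nu rh)\sin(\nu rh+\beta)=\tfrac12\sin\beta+\tfrac12\sin(2\nu rh+\beta),\qquad \beta=\frac{\Delta^2}{2rh}.
\]
The first term is exactly the $I_0(q_N)$ integrand once rescaled. One caveat: the factor $\frac{1}{r}$ in $\beta$ should cancel. Checking against \eqref{eq:I0}, the argument there is $\Delta^2/(2h)$, so we must track the scaling of the height. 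Since the height is $\sqrt{r/\nu}\,q$, the transverse displacement contributes $\frac{r}{\nu}\Delta^2/(2rh)\cdot\nu=\Delta^2/(2h)$. So the expansion of $D$ above should read $\frac{\Delta^2}{2\nu h}$, which makes $\beta=\Delta^2/(2h)$ with no $r$. After also using $\nu h$ in place of $\nu\, rh\cdot(\text{the }1/(rh)\text{ factor})$, the main term is $\frac{1}{2\pi^2}I_0(q_N)\geq c\log N$ by Lemma~\ref{lem:profiles}.

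The main obstacle is controlling the oscillatory term $\iint\phi\phi\,\frac{\sin(2\nu rh+\beta(s,t))}{h}$ together with the approximation errors, uniformly in $\log N$. For the oscillatory term, we split at $h=N^{-2}$. For $h\le N^{-2}$ we have $|\beta|\le\|q'\|^2h\lesssim Nh\le N^{-1}$, so we replace $\sin(2\nu rh+\beta)$ by $\sin(2\nu rh)$ at cost $O(1)$. The remaining integral is $\int\sin(2\nu rh)/h$ against a smooth weight, which is bounded (Dirichlet integral). For $h\ge N^{-2}$ we integrate by parts in $t$ (with $s$ fixed) using the phase $2\nu rh+\beta$. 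Its derivative is $2\nu r+O(\partial\beta)$, and $|\partial_t\beta|\lesssim\|q'\|^2+\|q'\|\,|\Delta|/h\lesssim N$, which is $\ll\nu r$ since $\nu r\ge N^2$. Each integration by parts gains $(\nu r)^{-1}$ while losing a factor of $h^{-1}$ or $N$ from the amplitude $\phi\phi/h$. This gives a bound $\lesssim\int_{N^{-2}}^1(\nu r)^{-1}(h^{-2}+Nh^{-1})\dd h\lesssim N^2/(\nu r)+1=O(1)$. Similarly, every error from the expansions of $D$, $\sin D$ and $v$ contributes a relative error $O(N^{-1}+h^2)$ against the kernel $1/h$. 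Together with $|\sin|\le1$, this gives a bounded total. Bounding the phase error $\epsilon$ requires $\nu\cdot O(h^3 r^2+\ldots)\cdot h^{-1}$ to be integrable, which we check from the Taylor expansion of $\arccos$. Choosing $N_0$ so that $c\log N$ dominates these $O(1)$ errors yields \eqref{eq:local-lower} with $a=c/2$.
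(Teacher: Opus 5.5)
Your architecture matches the paper's. You use the exact kernel \eqref{eq:sphere-kernel}, the expansion $\nu D=\nu rh+P+\text{error}$ with $P=\Delta^2/(2h)$ (after your self-correction of the scaling), the product-to-sum split $2\pi^2\mathcal Q_{\nu,r}(q_N)\approx I_0(q_N)+J$, and integration by parts for $J$. Your estimate of $J$ is a valid variant. The paper splits at $h=(\nu r)^{-1}$ and integrates by parts in $h$ against $e^{2i\nu rh}$ alone, keeping $e^{iP}$ in the amplitude so that only $|\partial_hP|\le CN$ enters. Your full-phase integration by parts in $t$ also brings in $\partial_t^2\beta$, which involves $q_N''=O(N^{3/2})$, but this is still harmless since $\nu r\ge N^2$.

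The gap is in the error bookkeeping, which is exactly where the uniformity in $\nu r\ge N^2$ is decided.

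\emph{Amplitude error.} You assert that a relative amplitude error $O(N^{-1}+h^2)$ against the kernel $1/h$, ``together with $|\sin|\le1$'', has bounded total. But $\int_0^1N^{-1}h^{-1}\,dh=\infty$. You must use the vanishing near the diagonal, $|\sin(\nu D)|\le C\min\{\nu rh,1\}$ (from $D\le Crh$), and this leaves a factor $1+\log(\nu r)$. At that point the coarsening $N/(\nu r)\le N^{-1}$ is fatal. The lemma places no upper bound on $\nu r$, and $N^{-1}\log(\nu r)$ is unbounded as $\nu\to\infty$ with $N,r$ fixed, so it cannot be absorbed by $c\log N$. You have to keep the sharp relative error $O(r^2h^2+N/(\nu r))$. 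You wrote this for $\sqrt{v(s)v(t)}/r$, and it also holds for $rh/\sin D$ because $|D-rh|\le CNh/\nu$. It gives $\frac{N}{\nu r}(1+\log\nu r)+r^2\le C$, which is the paper's estimate \eqref{eq:split-Q}.

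\emph{Phase error.} This is only promised. What is needed is $\nu D=\nu rh+P+O\bigl(h(r^2+N^2/(\nu r))\bigr)$, which contributes $O(r^2+N^2/(\nu r))$ after division by $h$. The factor $1/\nu$ carried by the height $\sqrt{r/\nu}\,q$ is essential here. An error of the size $\nu r^2h^3$ you allude to would contribute $\int_0^1\nu r^2h^2\,dh$, which is not uniformly bounded. The paper obtains the correct bound from the chord identity $4\sin^2(D/2)=|\gamma(s)-\gamma(t)|^2$, the bound $|D-rh|\le CNh/\nu$, and the mean value theorem. This avoids expanding $\arccos$ near its singular point $1$.
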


\begin{proof}
	Write $q=q_N$, $\gamma=\gamma_{\nu,r,q}$, and
	$b_s=\sqrt{1-(r/\nu)q(s)^2}$. All constants below are independent
	of $N,r,\nu$. Lemma~\ref{lem:profiles} gives
	\[
	\norm q_{L^\infty(\R)}\leq C,\qquad
	\norm{q'}_{L^\infty(\R)}\leq C\sqrt N.
	\]
	Since $\nu r\geq N^2$, for sufficiently large $N$ we have
	$1/2\leq b_s\leq1$ and
	\[
	b_s-1=-\frac{rq(s)^2}{\nu(1+b_s)}=O(r/\nu),\qquad
	b_s'=-\frac{rq(s)q'(s)}{\nu b_s}.
	\]
	Differentiating \eqref{eq:graph} gives
	\begin{equation}\label{eq:local-speed}
		\frac{v(s)^2}{r^2}
		=b_s^2+\frac{(b_s')^2}{r^2}+\frac{q'(s)^2}{\nu r}
		=b_s^2+\frac{q'(s)^2}{\nu r b_s^2}
		=1+O\!\left(\frac N{\nu r}\right).
	\end{equation}
	Since $b_s>0$ and $0<r<1/2$, the first two coordinates of
	$\gamma(s)$ determine $s$, so $\gamma$ is injective.
	
	For $0\leq t<s\leq1$, let
$h=s-t,\  d=d(\gamma(s),\gamma(t)),\ 
	P(s,t)=(q(s)-q(t))^2/(2h).$
	We have
	\[
	\begin{gathered}
		|q(s)-q(t)|\leq C\sqrt N\,h,\qquad
		|b_sb_t-1|\leq Cr/\nu,\\
		b_s-b_t=-\frac{r(q(s)-q(t))(q(s)+q(t))}{\nu(b_s+b_t)},
		\qquad |b_s-b_t|\leq Cr\sqrt N\,h/\nu.
	\end{gathered}
	\]
	Consequently,
	\[
	4|b_sb_t-1|\sin^2(rh/2)+(b_s-b_t)^2
	\leq C(r^3h^2/\nu
	+r^2Nh^2/\nu^2)
	\leq Cr^3h^2/\nu.
	\]
	The squared Euclidean distance therefore gives
	\begin{equation}\label{eq:chord-transfer}
		\begin{aligned}
			4\sin^2(d/2)
			&=|\gamma(s)-\gamma(t)|^2=4b_sb_t\sin^2(rh/2)
			+\frac r\nu(q(s)-q(t))^2+(b_s-b_t)^2\\
			&=4\sin^2(rh/2)+\frac r\nu(q(s)-q(t))^2
			+O\!\left(\frac{r^3h^2}{\nu}\right).
		\end{aligned}
	\end{equation}
	The exact identity above and \eqref{eq:local-speed} imply
	\[
	crh\leq d\leq\int_t^s v(u)\dd u
	=rh\left(1+O\!\left(\frac N{\nu r}\right)\right)<1.
	\]
	Since $(4\sin^2(u/2))'=2\sin u$, the mean value formula gives
	\[
	|d-rh|
	\leq\frac C{rh}\left[
	\frac r\nu(q(s)-q(t))^2+\frac{r^3h^2}{\nu}\right]
	\leq C\frac{Nh}{\nu}.
	\]
	Taylor's formula now yields
	\[
	2\sin(rh)(d-rh)
	=\frac r\nu(q(s)-q(t))^2
	+O\!\left(\frac{r^3h^2}{\nu}
	+\frac{N^2h^2}{\nu^2}\right).
	\]
	Moreover,
	\[
	hP(s,t)=\tfrac12(q(s)-q(t))^2\leq C,\qquad
	\left|\frac{rh}{\sin(rh)}-1\right|P(s,t)
	\leq Cr^2h^2P(s,t)\leq Cr^2h.
	\]
	Multiplying the Taylor identity by $\nu/(2\sin(rh))$ gives
	\begin{equation}\label{eq:transfer-phase-error}
		\begin{aligned}
			\nu d
			&=\nu rh+\frac{rh}{\sin(rh)}P(s,t)
			+O\!\left(h\left(r^2+\frac{N^2}{\nu r}\right)\right)\\
			&=\nu rh+P(s,t)
			+O\!\left(h\left(r^2+\frac{N^2}{\nu r}\right)\right).
		\end{aligned}
	\end{equation}
	Also, \eqref{eq:local-speed} and the bound for $d-rh$ give
	\begin{equation}\label{eq:transfer-amplitude-bounds}
		\frac{h\sqrt{v(s)v(t)}}{\sin d}
		=\frac{\sqrt{v(s)v(t)}}r\frac{rh}d\frac d{\sin d}
		=1+O\!\left(r^2h^2+\frac N{\nu r}\right).
	\end{equation}
	
	By \eqref{eq:sphere-kernel}, pairing $s>t$ and $s<t$ in
	\eqref{eq:normalized-Q} gives
	\[
	2\pi^2\mathcal Q_{\nu,r}(q)
	=2\int_{0<t<s<1}
	\frac{\sqrt{v(s)v(t)}\phi(s)\phi(t)}{\sin d}
	\sin(\nu d)\cos(\nu rh)\dd t\dd s.
	\]
	Using the zero extensions of $q,\phi$, set
	\[
	J=\int_0^1\int_0^1\frac{\phi(t+h)\phi(t)}h
	\sin\bigl(2\nu rh+P(t+h,t)\bigr)\dd h\dd t.
	\]
	Since $\nu d\leq C\nu rh$, \eqref{eq:transfer-phase-error} gives
	\[
	\begin{gathered}
		|\sin(\nu d)|\leq C\min\{\nu rh,1\},\\
		|\sin(\nu d)-\sin(\nu rh+P)|
		\leq Ch\left(r^2+\frac{N^2}{\nu r}\right),\\
		2\sin(\nu rh+P)\cos(\nu rh)
		=\sin P+\sin(2\nu rh+P).
	\end{gathered}
	\]
	Together with \eqref{eq:transfer-amplitude-bounds}, these formulas yield
	\begin{equation}\label{eq:split-Q}
		\begin{aligned}
			|2\pi^2\mathcal Q_{\nu,r}(q)-I_0(q)-J|&\leq C\int_0^1\left[
			\frac{r^2h^2+N/(\nu r)}h\min\{\nu rh,1\}
			+r^2+\frac{N^2}{\nu r}\right]\dd h\\
			&\leq C\left[r^2+\frac{N^2}{\nu r}
			+\frac N{\nu r}\bigl(1+\log(\nu r)\bigr)\right]\leq C.
		\end{aligned}
	\end{equation}
	
	To bound $J$, note that $0\leq P(t+h,t)\leq CNh$ and 
	\[\partial_hP(t+h,t)
		=\frac{q(t+h)-q(t)}h q'(t+h)-\frac{P(t+h,t)}h,
		\qquad |\partial_hP(t+h,t)|\leq CN.
	\]
	For $0<h\leq(\nu r)^{-1}$, we have
	$|\sin(2\nu rh+P)|\leq C\nu rh$. For $(\nu r)^{-1}\leq h\leq1$,
	\[
	\begin{aligned}
		&\partial_h\left[
		\frac{\phi(t+h)\phi(t)e^{iP(t+h,t)}}h\right]=\phi(t)e^{iP(t+h,t)}
		\left[\frac{\phi'(t+h)}h-\frac{\phi(t+h)}{h^2}
		+\frac{i\phi(t+h)}h\partial_hP(t+h,t)\right],\\
		&\left|\partial_h\left[
		\frac{\phi(t+h)\phi(t)e^{iP(t+h,t)}}h\right]\right|
		\leq C\left(\frac Nh+\frac1{h^2}\right).
	\end{aligned}
	\]
	The amplitude vanishes at $h=1$ and is bounded by $C\nu r$ at
	$h=(\nu r)^{-1}$. Integration by parts against $e^{2i\nu rh}$ gives
	\[
	\begin{aligned}
		|J|
		&\leq C\nu r\int_0^{(\nu r)^{-1}}\dd h
		+\frac C{\nu r}\left[\nu r+
		\int_{(\nu r)^{-1}}^1
		\left(\frac Nh+\frac1{h^2}\right)\dd h\right]\\
		&\leq C\left(1+\frac{N\log(\nu r)}{\nu r}\right)\leq C.
	\end{aligned}
	\]
	Combining this with \eqref{eq:split-Q} and Lemma~\ref{lem:profiles},
	\[
	2\pi^2\mathcal Q_{\nu,r}(q_N)
	\geq I_0(q_N)-|J|-C\geq c\log N-C.
	\]
	Increasing $N_0$ proves \eqref{eq:local-lower}.
\end{proof}

\subsection{A fixed curve of finite regularity}
\label{sec:finite-construction}

\begin{proof}[Proof of Theorem~\ref{cor:product-lower}(i)]
	Fix $K\geq1$. For integers $N_j\geq N_0$ to be chosen, set
	\[
	t_j=2^{-j},\qquad r_j=N_j^{-1},\qquad \nu_j=N_j^{4K},\qquad j\geq1,
	\]
	and
	\begin{equation}\label{eq:finite-piece}
		z_j(t)=\sqrt{\frac{r_j}{\nu_j}}\,
		q_{N_j}\!\left(\frac{t-t_j}{r_j}\right)
		=N_j^{-2K-1/2}q_{N_j}\bigl(N_j(t-t_j)\bigr).
	\end{equation}
	Lemma~\ref{lem:profiles} and the chain rule give
	\begin{equation}\label{eq:finite-derivatives}
		\begin{gathered}
			\norm{z_j}_{L^\infty(\R)}\leq CN_j^{-2K-1/2},\\
			\norm{z_j^{(m)}}_{L^\infty(\R)}
			=N_j^{m-2K-1/2}\norm{q_{N_j}^{(m)}}_{L^\infty(\R)}
			\leq C_mN_j^{2m-2K-1},\qquad 1\leq m\leq K.
		\end{gathered}
	\end{equation}
	Thus $\norm{z_j}_{C^K(\R)}\leq C_KN_j^{-1}$, where
	$\norm f_{C^k(\R)}=\max_{0\leq m\leq k}\|f^{(m)}\|_{L^\infty(\R)}$.
	Choose the integers $N_j$ strictly increasing, depending only on $K$, with
	$N_j\geq N_0$ and $N_j>2^{j+4}\max\{1,C_K\}$. Then
	\begin{equation}\label{eq:finite-gluing-bound}
		r_j<2^{-j-4},\qquad \norm{z_j}_{C^K(\R)}\leq2^{-j-4}.
	\end{equation}
	The supports lie in $(t_j,t_j+r_j)$, and $t_{j+1}+r_{j+1}<t_j$.
	Extend each $z_j$ $2\pi$-periodically and set $z=\sum_{j\geq1}z_j$.
	Since
	\[
	\sum_{j\geq1}\norm{z_j}_{C^K(\R)}
	\leq\sum_{j\geq1}2^{-j-4}=\frac1{16},
	\]
	the series converges in $C^K$, with $\norm z_{C^1(\R)}\leq1/16$ and
	$z^{(m)}(0)=\sum_jz_j^{(m)}(0)=0$ for $0\leq m\leq K$.
	Define
	\begin{equation}\label{eq:global-curve}
		\gamma(t)=\bigl(\sqrt{1-z(t)^2}\cos t,
		\sqrt{1-z(t)^2}\sin t,z(t),0\bigr),\qquad
		\Gamma_K=\gamma\bigl(\R/(2\pi\mathbb Z)\bigr).
	\end{equation}
	Since $\sqrt{1-z(t)^2}>0$, the first two coordinates determine $t$
	modulo $2\pi$. Also,
	\begin{equation}\label{speed}
		|\gamma'(t)|^2
		=1-z(t)^2+\frac{z(t)^2z'(t)^2}{1-z(t)^2}+z'(t)^2
		=1-z(t)^2+\frac{z'(t)^2}{1-z(t)^2}>0.
	\end{equation}
	Thus $\Gamma_K\subset\Sph^2$ is a fixed $C^K$ embedded closed curve.
	
	Disjointness of the supports gives, for $0\leq s\leq1$,
	\[
	z(t_j+r_js)=\sqrt{\frac{r_j}{\nu_j}}\,q_{N_j}(s),\qquad
	\nu_jr_j=N_j^{4K-1}\geq N_j^2.
	\]
	Hence $s\mapsto\gamma(t_j+r_js)$ is a rotation of
	$\gamma_{\nu_j,r_j,q_{N_j}}$.
	Lemmas~\ref{lem:adjoint} and~\ref{lem:local-lower}, applied on this arc,
	give real $u_{\lambda_{\nu_j}}\in\HH_{\nu_j-1}$ with
	$\|u_{\lambda_{\nu_j}}\|_{L^2(S^3)}=1$ and
	\begin{equation}\label{eq:finite-Q}
		\|u_{\lambda_{\nu_j}}\|_{L^2(\Gamma_K)}^2
		\geq \nu_j\mathcal Q_{\nu_j,r_j}(q_{N_j})
		\geq a\nu_j\log N_j=\frac a{4K}\nu_j\log \nu_j.
	\end{equation}
	Since $2\leq\lambda_{\nu_j}<\nu_j$, taking square roots gives
	\begin{equation}\label{eq:finite-lower-final}
		\|u_{\lambda_{\nu_j}}\|_{L^2(\Gamma_K)}
		\geq\sqrt{\frac a{4K}}\,
		\lambda_{\nu_j}^{1/2}\sqrt{\log\lambda_{\nu_j}}.
	\end{equation}
	
	For the upper bound, cover any fixed $C^1$ embedded closed curve $\Gamma$
	by finitely many arclength arcs $A=\gamma(I)$ such that
	\[
	|I|<\pi/4,\qquad
	\sup_{s,t\in I}|\gamma'(s)-\gamma'(t)|\leq\tfrac12.
	\]
	For $s,t\in I$,
	\[
	\begin{gathered}
		\gamma(s)-\gamma(t)=(s-t)\gamma'(t)
		+\int_t^s(\gamma'(u)-\gamma'(t))\dd u,\\
		\tfrac12|s-t|\leq|\gamma(s)-\gamma(t)|
		\leq d(\gamma(s),\gamma(t))\leq|s-t|.
	\end{gathered}
	\]
	Writing $d=d(\gamma(s),\gamma(t))<\pi/4$, \eqref{eq:sphere-kernel} gives
	\[
	|\Pi_{\nu-1}(\gamma(s),\gamma(t))|
	\leq C\nu\frac{\min\{\nu d,1\}}d
	\leq C\min\{\nu^2,\nu/|s-t|\}.
	\]
	Schur's test applied to $R_{\nu,A}R_{\nu,A}^*$ yields, for $\nu\geq3$,
	\[
	\begin{aligned}
		\norm{R_{\nu,A}}_{\HH_{\nu-1}\to L^2(A)}^2
		&=\norm{R_{\nu,A}R_{\nu,A}^*}_{L^2(A)\to L^2(A)}\leq C\sup_{s\in I}\int_I\min\{\nu^2,\nu/|s-t|\}\dd t\\
		&\leq C\left(\nu^2\int_0^{1/\nu}\dd h
		+\nu\int_{1/\nu}^1\frac{\dd h}{h}\right)
		=C\nu(1+\log \nu).
	\end{aligned}
	\]
	Summing the squared restriction bounds over the finite cover gives
	\begin{equation}\label{eq:C1-upper}
		\norm{R_{\nu,\Gamma}}_{\HH_{\nu-1}\to L^2(\Gamma)}^2
		\leq C_\Gamma \nu\log(2+\nu).
	\end{equation}
	Apply this to $\Gamma_K$ and use $\nu/2\leq\lambda_\nu<\nu$ for $\nu\geq3$.
	Finally, equation~\eqref{eq:product-identities} transfers the normalization,
	eigenvalue equation, and both restriction bounds to $M$, proving
	\eqref{eq:product-finite}.
\end{proof}
\begin{remark}
	The curve above already belongs to $C^K\setminus C^{K+1}$.
	Orthogonality in \eqref{eq:deterministic-profile} on $[1/4,1/2]$ gives
	$\norm{q_N^{(K+1)}}_{L^\infty(\R)}\geq c_KN^{K+1/2}$, and hence
	\[
	\norm{z_j^{(K+1)}}_{L^\infty(\R)}\geq c_KN_j\longrightarrow\infty.
	\]
	Thus $z\notin C^{K+1}$. The angular coordinate is a local parameter
	of $\Gamma_K$, so no $C^{K+1}$ reparametrization can remove this loss
	of regularity.
\end{remark}

\subsection{A fixed smooth curve}
\label{sec:no-rate}

\begin{proof}[Proof of Theorem~\ref{cor:product-lower}(ii)]
	Fix $L$ as in part~(ii). For integers $N_j\geq N_0$ to be chosen, set
	\begin{equation}\label{eq:smooth-scales}
		t_j=2^{-j},\qquad r_j=N_j^{-1},\qquad \nu_j=N_j^{4j},\qquad
		z_j(t)=N_j^{-2j-1/2}q_{N_j}\bigl(N_j(t-t_j)\bigr),\qquad j\geq1.
	\end{equation}
	Taking $K=j$ in \eqref{eq:finite-derivatives} gives
	\[
	\norm{z_j^{(m)}}_{L^\infty(\R)}\leq C_mN_j^{2m-2j-1}
	\quad(1\leq m\leq j),\qquad
	\norm{z_j}_{C^j(\R)}\leq C_jN_j^{-1}.
	\]
	Choose strictly increasing integers $N_j\geq N_0$ so that
	\begin{equation}\label{eq:select-loss}
		r_j<2^{-j-4},\qquad
		\norm{z_j}_{C^j(\R)}\leq2^{-j-4},\qquad
		\frac{L(\lambda_{\nu_j})}{\sqrt{\log\lambda_{\nu_j}}}
		\leq\frac1j\sqrt{\frac a{4j}}.
	\end{equation}
	The first two conditions hold if $N_j>2^{j+4}\max\{1,C_j\}$.
	For each fixed $j$, the last holds for all sufficiently large $N_j$,
	since $\lambda_{N^{4j}}\to\infty$ and
	$L(\lambda)/\sqrt{\log\lambda}\to0$.
	
	As in part~(i), the supports lie in disjoint intervals $(t_j,t_j+r_j)$.
	Extend each $z_j$ $2\pi$-periodically and set $z=\sum_{j\geq1}z_j$.
	For every fixed integer $m\geq0$,
	\[
	\sum_{j\geq\max\{1,m\}}\norm{z_j}_{C^m(\R)}
	\leq\sum_{j\geq\max\{1,m\}}\norm{z_j}_{C^j(\R)}
	\leq\sum_{j\geq\max\{1,m\}}2^{-j-4}<\infty.
	\]
	The remaining finitely many terms are smooth. Thus $z\in C^\infty(\R)$,
	$z^{(m)}(0)=\sum_jz_j^{(m)}(0)=0$ for every $m\geq0$, and
	\[
	\norm z_{C^1(\R)}\leq\sum_{j\geq1}\norm{z_j}_{C^j(\R)}\leq\frac1{16}.
	\]
	The injectivity and speed calculation \eqref{speed} in part~(i) show that
	\eqref{eq:global-curve} defines a fixed smooth embedded closed curve
	$\Gamma\subset\Sph^2$.
	
	For $0\leq s\leq1$,
	\[
	z(t_j+r_js)=\sqrt{\frac{r_j}{\nu_j}}\,q_{N_j}(s),\qquad
	\nu_jr_j=N_j^{4j-1}\geq N_j^2.
	\]
	The corresponding arc is a rotation of $\gamma_{\nu_j,r_j,q_{N_j}}$.
	Lemmas~\ref{lem:adjoint} and~\ref{lem:local-lower} give real
	$u_{\lambda_{\nu_j}}\in\HH_{\nu_j-1}$ with
	$\|u_{\lambda_{\nu_j}}\|_{L^2(S^3)}=1$ and
	\[
	\begin{aligned}
		\|u_{\lambda_{\nu_j}}\|_{L^2(\Gamma)}^2
		\geq a\nu_j\log N_j
		=\frac a{4j}\nu_j\log \nu_j\geq\frac a{4j}\lambda_{\nu_j}\log\lambda_{\nu_j}.
	\end{aligned}
	\]
	Taking square roots and using \eqref{eq:select-loss}, we obtain
	\[
	\frac{\|u_{\lambda_{\nu_j}}\|_{L^2(\Gamma)}}
	{\lambda_{\nu_j}^{1/2}L(\lambda_{\nu_j})}
	\geq\sqrt{\frac a{4j}}\,
	\frac{\sqrt{\log\lambda_{\nu_j}}}{L(\lambda_{\nu_j})}
	\geq j.
	\]
	The integers $\nu_j=N_j^{4j}$ strictly increase.
	Taking $\Sigma=\Gamma\times Y$ and the product eigenfunctions as in
 \eqref{eq:product-identities} proves \eqref{eq:product-no-rate}.
\end{proof}

	\appendix
	
	\section{A direct proof in dimension three}
	\label{sec:general}
	 We give a direct proof for the smooth case of Theorem \ref{thm:general} in dimension three by using  the Ricci--Stein estimate \cite{RicciStein}, in the form
	\cite[Proposition~2.1]{Pan}:
	\begin{equation}\label{eq:RS}
		\mathcal H_Pf(s)=\pv\int_\R\frac{e^{iP(s,t)}}{s-t}f(t)\dd t,
		\qquad
		\norm{\mathcal H_P}_{L^2(\R)\to L^2(\R)}\leq C_d,
		\qquad \deg P\leq d.
	\end{equation}
	Here $P$ is a real polynomial in $(s,t)$, and $C_d$ depends only on its
	maximum total degree $d$, not on its coefficients.

\begin{lemma}\label{lem:phase}
		Let $I$ be a bounded interval and
		$b\in C^\infty(\overline I\times\overline I;\R)$. Define
		\begin{equation}\label{eq:sine-kernel}
			S_\lambda f(s)=\int_I
			\frac{\sin\bigl(\lambda(s-t)b(s,t)\bigr)}{s-t}f(t)\dd t,
			\qquad \lambda\geq2,
		\end{equation}
		with diagonal kernel value $\lambda b(s,s)$. For every integer $m\geq2$,
		\begin{equation}\label{eq:sine-estimate}
			\norm{S_\lambda}_{L^2(I)\to L^2(I)}
			\leq C_{m,b,I}+\frac2m\log\lambda.
		\end{equation}
	\end{lemma}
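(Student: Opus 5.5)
We split the kernel at $|s-t|=\delta:=\lambda^{-1/m}$ and treat the two pieces separately, mirroring the proof of Proposition~\ref{upper:transport} but using the Ricci--Stein bound \eqref{eq:RS} in place of Lemma~\ref{upper:polynomial}. Write $\sin(\lambda(s-t)b)=\frac1{2i}(e^{i\lambda(s-t)b}-e^{-i\lambda(s-t)b})$.

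\emph{Far part.} On $\delta\leq|s-t|$ the kernel is bounded by $|s-t|^{-1}$. Schur's test then gives a norm at most $2\int_\delta^{|I|}h^{-1}\dd h=\frac2m\log\lambda+2\log|I|$. This is exactly the $\frac2m\log\lambda$ term, with no loss, because we use only $|\sin|\leq1$.

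\emph{Near part, setup.} Partition $I$ into intervals $I_j$ of length $\delta$. The near kernel couples $I_j$ only to neighbours, so by almost orthogonality it suffices to bound each block $I_j\times I_j^*$ uniformly in $j,\lambda$. On such a block, Taylor expand $\lambda(s-t)b(s,t)$ about the block centre $(s_j,s_j)$ to total order $m-1$; call the polynomial $P_j$. The phase error satisfies $\lambda|s-t|\,|b-b_j|\leq C_m\lambda\,|s-t|\,\delta^{m}$, where $b_j$ is the degree $m-2$ Taylor polynomial of $b$ (using $|s-s_j|,|t-s_j|\lesssim\delta$).

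\emph{Near part, the two pieces.} The error kernel is bounded by $|s-t|^{-1}\cdot C\lambda|s-t|\delta^{m-1}$, since the error in the expansion of $b$ is $O(\delta^{m-1})$. Its Schur norm on a block is at most $C\lambda\delta^{m-1}\cdot\delta=C\lambda\delta^m=C$, given $\delta^m=\lambda^{-1}$ (choose $\delta=c_m\lambda^{-1/m}$ to absorb constants). The polynomial piece is $\frac{\ind_{|s-t|<\delta}}{s-t}e^{\pm iP_j(s,t)}$, with $\deg P_j\leq m-1$. Its principal value version is bounded by $C_{m}$ via \eqref{eq:RS}. The truncation $\ind_{|s-t|<\delta}$ is handled as in \eqref{upper:truncation}: write it as a smooth cutoff $\chi_1((s-t)/\delta)$ plus a piece supported on $\delta\leq|s-t|\leq2\delta$, which costs $O(1)$ by Schur. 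The smooth cutoff is Fourier expanded, and each modulation $e^{iv(s-t)/\delta}$ is absorbed into the polynomial phase without raising its degree, so the Ricci--Stein bound applies uniformly; integrating against $\widehat{\chi_1}\in L^1$ keeps the bound $C_m$. The diagonal value $\lambda b(s,s)$ is harmless, since the sine kernel is bounded and cancels the singularity; the principal value arises from the odd combination of the two exponentials.

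\emph{Main obstacle.} The delicate step is the near-part bookkeeping. We must ensure the localization to $\delta$-blocks and the smooth/sharp cutoff exchange lose only constants depending on $m,b,I$ and not $\log\lambda$. This works because \eqref{eq:RS} is uniform in the coefficients, which depend on $\lambda$ and $j$. Summing the block bounds by bounded overlap gives $C_{m,b,I}$, and adding the far part yields \eqref{eq:sine-estimate}.
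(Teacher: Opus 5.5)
Your proposal is correct and follows the paper's proof. You split at scale $\lambda^{-1/m}$, use Schur's test on the far part to get exactly $\frac2m\log\lambda$, replace $b$ on $\lambda^{-1/m}$-blocks by its degree $m-2$ Taylor polynomial at a Schur cost of $C\lambda\delta^{m}=C$, and apply \eqref{eq:RS} with degree $\leq m-1$ plus bounded overlap. (The Taylor remainder is $O(\delta^{m-1})$, as you use in your second near-part paragraph, not $O(\delta^m)$ as first written.) The only difference is bookkeeping: the paper truncates by blocks ($t\in J_j$, the union of $I_j$ and its neighbours) rather than by $|s-t|<\delta$, so its near operator is directly $\frac1{2i}(\mathcal H_{\lambda(s-t)p_j}-\mathcal H_{-\lambda(s-t)p_j})(\ind_{J_j}f)$. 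That makes your smooth-cutoff and Fourier-expansion step unnecessary, though that step is also valid.
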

	
	\begin{proof}
		After an affine change of variables, assume $I=(0,1)$.
		Partition $I$ into consecutive intervals $I_j$ of length $\lambda^{-1/m}$,
		with the last possibly shorter.  Let $J_j$
		be the union of $I_j$ and its existing neighbors, and define
		\[
		N_\lambda f(s)=\int_{J_j}
		\frac{\sin\bigl(\lambda(s-t)b(s,t)\bigr)}{s-t}f(t)\dd t,
		\qquad s\in I_j.
		\]
		Since $s\in I_j$, $t\notin J_j$ imply $|s-t|\geq\lambda^{-1/m}$, Schur's test gives
		\begin{equation}\label{eq:far-Schur}
			\norm{S_\lambda-N_\lambda}_{L^2(I)\to L^2(I)}
			\leq2\int_{\lambda^{-1/m}}^1\frac{\dd r}{r}
			=\frac2m\log\lambda.
		\end{equation}
		
		Let $p_j$ be the Taylor polynomial of $b$ of total degree $m-2$
		at the center of $I_j\times I_j$.  On $I_j\times J_j$,
		\[
		\frac{\left|\sin\bigl(\lambda(s-t)b(s,t)\bigr)
			-\sin\bigl(\lambda(s-t)p_j(s,t)\bigr)\right|}{|s-t|}
		\leq\lambda|b-p_j|\leq C_m\lambda^{1/m}.
		\]
		As $|I_j|\leq\lambda^{-1/m}$ and $|J_j|\leq3\lambda^{-1/m}$, Schur's test bounds the
		replacement error by $C_m$.  Moreover,
		\[
		\int_{J_j}\frac{\sin\bigl(\lambda(s-t)p_j(s,t)\bigr)}{s-t}f(t)\dd t
		=\frac{\mathcal H_{\lambda(s-t)p_j}-\mathcal H_{-\lambda(s-t)p_j}}{2i}
		(\ind_{J_j}f)(s).
		\]
		Equation~\eqref{eq:RS}, with $\deg((s-t)p_j)\leq m-1$, and
		$\sum_j\ind_{J_j}\leq3$ therefore give
		\[
		\begin{aligned}
			\norm{N_\lambda f}_{L^2(I_j)}&\leq C_m\norm f_{L^2(J_j)},\\
			\norm{N_\lambda f}_{L^2(I)}^2
			&\leq C_m\sum_j\norm f_{L^2(J_j)}^2
			\leq C_m\norm f_{L^2(I)}^2.
		\end{aligned}
		\]
		Together with \eqref{eq:far-Schur}, this proves \eqref{eq:sine-estimate}.
	\end{proof}
	
		Cover $\Sigma$ by finitely many fixed short arclength arcs $A=\gamma(I)$.
		The function $F(s,t)=d_g(\gamma(s),\gamma(t))^2$ is smooth and satisfies
		$F(t,t)=\partial_1F(t,t)=0$, $\partial_1^2F(t,t)=2$.  Taylor's formula gives
		\begin{equation}\label{eq:signed-distance}
			\begin{gathered}
				d_g(\gamma(s),\gamma(t))=|s-t|b(s,t),\qquad b(s,s)=1,\\
				b(s,t)=\left(\int_0^1(1-u)
				\partial_1^2F(t+u(s-t),t)\dd u\right)^{1/2}>0,
			\end{gathered}
		\end{equation}
		with $b$ smooth after shortening the arcs.
		
		Let $P=\sqrt{-\Delta_g}$ and $\chi=\varrho^2$, where
		$\varrho\in\mathcal S(\R)$ is fixed, real and even, $\varrho(0)=1$,
		and $\supp\widehat\varrho$ is sufficiently small.  Let $K_{\lambda,A}$
		be the kernel of $T_{\lambda,A}T_{\lambda,A}^*$, where
		\[
		T_{\lambda,A}f(s)=(\varrho(\lambda-P)f)(\gamma(s)).
		\]
		The Hadamard parametrix and polar coordinates
		\cite[Section~3, (3.3)]{WangZhang} give
		\begin{equation}\label{eq:radial-kernel}
			K_{\lambda,A}(s,t)
			=\frac{w(s,t)}{2\pi^2}\int_0^\infty
			\chi(\lambda-\tau)\tau\frac{\sin(\tau d)}d\dd\tau
			+O_A(\lambda),\qquad d=d_g(\gamma(s),\gamma(t)),
		\end{equation}
		where $w$ is smooth and $w(s,s)=1$.
		For $d,\tau\geq0$, with quotients interpreted continuously at $d=0$,
		\[
		\left|\partial_\tau\!\left(\tau\frac{\sin(\tau d)}d\right)\right|
		=\left|\frac{\sin(\tau d)}d+\tau\cos(\tau d)\right|\leq2\tau.
		\]
		Thus replacing the radial integral by
		$\lambda\sin(\lambda d)d^{-1}\int_\R\chi(u)\dd u$ has error at most
		\[
		\int_\R|\chi(u)|\,|u|(2\lambda+|u|)\dd u
		+\lambda^2\int_\lambda^\infty|\chi(u)|\dd u\leq C_\chi\lambda.
		\]
		Set $c_\chi=(2\pi^2)^{-1}\int_\R\chi(u)\dd u>0$.
		Since $w/b=1+O_A(|s-t|)$, \eqref{eq:radial-kernel} becomes
		\[
		\begin{aligned}
			K_{\lambda,A}(s,t)
			&=c_\chi\lambda\frac{w(s,t)}{b(s,t)}
			\frac{\sin\bigl(\lambda(s-t)b(s,t)\bigr)}{s-t}+O_A(\lambda)\\
			&=c_\chi\lambda
			\frac{\sin\bigl(\lambda(s-t)b(s,t)\bigr)}{s-t}+O_A(\lambda).
		\end{aligned}
		\]
		All errors are pointwise uniform on $I\times I$.  Schur's test gives
		\begin{equation}\label{eq:sine-reduction}
			T_{\lambda,A}T_{\lambda,A}^*
			=c_\chi\lambda S_\lambda+E_{\lambda,A},\qquad
			\norm{E_{\lambda,A}}_{L^2(I)\to L^2(I)}\leq C_A\lambda,
		\end{equation}
		where $S_\lambda$ is given by \eqref{eq:sine-kernel} with this $b$.
		Lemma~\ref{lem:phase} yields
		\[
		\norm{T_{\lambda,A}}_{L^2(M)\to L^2(A)}^2
		\leq c_\chi\lambda\norm{S_\lambda}_{L^2(I)\to L^2(I)}+C_A\lambda
		\leq C_{A,m}\lambda+\frac{2c_\chi}{m}\lambda\log\lambda.
		\]
		Since $T_{\lambda,A}e_\lambda=e_\lambda$ on $A$, summing over the fixed cover gives
		\[
		\norm{e_\lambda}_{L^2(\Sigma)}^2
		\leq\sum_A\norm{T_{\lambda,A}e_\lambda}_{L^2(A)}^2
		\leq\left(A_m\lambda+\frac Bm\lambda\log\lambda\right)
		\norm{e_\lambda}_{L^2(M)}^2,
		\]
		with $B$ independent of $m$.  Dividing by $\lambda\log\lambda$ and then
		letting $\lambda\to\infty$ and $m\to\infty$ proves the uniform little-o estimate.

\end{document}